\documentclass[a4paper,12pt]{article}

\textheight235mm
\textwidth160mm
\voffset-10mm
\hoffset-10mm
\parindent0cm
\parskip2mm

\newcommand{\calZ}{\mathcal{Z}}
\newcommand{\calO}{\mathcal{O}}
\def\softO{\ensuremath{{\mathcal{O}}{\,\tilde{ }\,}}}
\def\bigO{\ensuremath{{\mathcal{O}}}}

\newcommand{\calD}{\mathcal{D}}
\newcommand{\calV}{\mathcal{V}}

\newcommand{\calG}{\mathcal{G}}

\def\NN{{\mathbb{N}}}
\def\ZZ{{\mathbb{Z}}}
\def\QQ{{\mathbb{Q}}}
\def\RR{{\mathbb{R}}}
\def\CC{{\mathbb{C}}}

\def\fall{\forall\,}

\def\minrank{{r(A)}}
\usepackage{varwidth,url}     
\setlength\fboxrule{1pt}  
\setlength\fboxsep{0.3cm} 

\def\MM{{\mathbb{M}}}
\def\Id{{\mathbb{I}}}

\usepackage{amsmath,pgfplots,caption}
\usepackage{amssymb}
\usepackage{amsfonts}
\usepackage{graphicx,epstopdf}
\usepackage{mathrsfs}
\usepackage{subfigure}
\usepackage{comment}

\usepackage{ulem} 

\newtheorem{theorem}{Theorem}
\newtheorem{notation}[theorem]{Notation}   
\newtheorem{proposition}[theorem]{Proposition}   
\newtheorem{lemma}[theorem]{Lemma}   
\newtheorem{corollary}[theorem]{Corollary}

\newtheorem{problem}{Problem}   

\newtheorem{example}[theorem]{Example}   
\newtheorem{conjecture}[theorem]{Conjecture}

\newenvironment{proof}[1][]{\noindent {\bf Proof #1:\;}}{\hfill $\Box$}

\def\QQ{{\mathbb{Q}}} 
\def\RR{{\mathbb{R}}} 
\def\CC{{\mathbb{C}}} 



\def\scC{{\mathscr{C}}}
\def\scS{{\mathscr{S}}}
\def\sfP{{\mathsf{P}}}
\def\sfQ{{\mathsf{Q}}}

\def\sing{{\rm sing}\:}
\def\crit{{\rm crit}\:}
\def\reg{{\rm reg}\:}
\def\spec{\mathscr{S}}
\def\cc{\mathcal{C}}

\newcommand{\zeroset}[1]{{Z(#1)}}
\newcommand{\ideal}[1]{{I(#1)}}


\def\setV{{\mathcal{V}}}

\def\setZ{{\mathcal{Z}}}

\def\zarA{{\mathscr{A}}}
\def\zarfiber{{\mathscr{T}}}

\def\zarM{{\mathscr{M}}}
\def\zarO{{\mathscr{O}}}
\def\zarS{{\mathscr{S}}}
\def\zarC{{\mathscr{C}}}

\def\sfH{{\mathsf{H}}} 

\def\jac{{D}} 

\def\rank{{\rm rank}\,}
\def\deg{{\rm deg}}

\def\GL{{\mathrm{GL}}} 


  
\def\fiber{{t}}  


\def\softO{\ensuremath{{O}{\,\tilde{ }\,}}}

\def\mymid{{\,\,:\,\,}}


\newcommand{\new}[1]{{{#1}}}
\newcommand{\verynew}[1]{{{#1}}}

\title{Exact algorithms for linear matrix inequalities}

\author{
Didier Henrion\thanks{LAAS-CNRS, Universit\'e de Toulouse, CNRS, Toulouse, France;
Faculty of Electrical Engineering, Czech Technical University in Prague, Czech Republic.} \and 
Simone Naldi\thanks{LAAS-CNRS, Universit\'e de Toulouse, CNRS, Toulouse, France.} \and
Mohab Safey El Din\thanks{Sorbonne Universit\'es, UPMC Univ Paris 06, CNRS, INRIA Paris Center, LIP6, Equipe PolSys, F-75005, Paris, France.}}

\date{\today}

\begin{document}

\maketitle

\begin{abstract}
\noindent
Let $A(x) = A_0+x_1A_1+\cdots+x_nA_n$ be a linear matrix, or pencil, generated by given symmetric
matrices $A_0,A_1,\ldots,A_n$ of size $m$ with rational entries. The set of real vectors $x$
such that the pencil is positive semidefinite is a convex semi-algebraic set called spectrahedron,
described by a linear matrix inequality (LMI). We design an exact algorithm that, up to genericity
assumptions on the input matrices, computes an exact algebraic representation of at least one point in
the spectrahedron, or decides that it is empty. The algorithm does not assume the existence
of an interior point, and the computed point minimizes the rank of the pencil on the spectrahedron.
The degree $d$ of the algebraic representation of the point coincides experimentally with the algebraic
degree of a generic semidefinite program associated to the pencil.
We provide explicit bounds for the complexity of our algorithm, proving that the maximum number
of arithmetic operations that are performed
is essentially quadratic in a multilinear B\'ezout bound of $d$. When $m$ (resp. $n$) is fixed,
such a bound, and hence the complexity, is polynomial in $n$ (resp. $m$). 
We conclude by providing results of experiments showing practical improvements with respect to
state-of-the-art computer algebra algorithms.
\end{abstract}

{\bf Keywords}: {linear matrix inequalities, semidefinite programming, computer algebra algorithms,
symbolic computation, polynomial optimization.}

\section{Introduction} \label{sec1}
Let ${\mathbb{S}}_m(\QQ)$ be the vector space of $m \times m$ symmetric matrices with
entries in $\QQ$. Let $A_0,A_1,\ldots,A_n \in {\mathbb{S}}_m(\QQ)$ and
$A(x)=A_0+x_1A_1+ \cdots+x_nA_n$ be the associated {\it linear matrix},
$x=(x_1, \ldots, x_n) \in \RR^n$.
We also denote the tuple
$(A_0, A_1, \ldots, A_n)$ by $A \in {\mathbb{S}}_m^{n+1}(\QQ)$.
For $x \in \RR^n$, $A(x)$ is symmetric, with real entries, and hence its eigenvalues are real numbers. 

The central object of this paper is the subset of $x \in \RR^n$ such that the eigenvalues
of $A(x)$ are all nonnegative, that is the {\it spectrahedron}
\[
\spec = \{x \in \RR^n \mymid A(x) \succeq 0\}.
\]
Here $\succeq 0$ means ``positive semidefinite'' and $A(x) \succeq 0$ is called
a {\it linear matrix inequality} (LMI). 
The set $\spec$ is convex closed basic semi-algebraic.
This paper addresses the following decision problem for the spectrahedron $\spec$: 
\begin{problem}[\new{Feasibility of semidefinite programming}]
\label{mainprob}
Compute an exact algebraic representation of at least one
point in $\spec$, or decide that $\spec$ is empty.
\end{problem}

We present a probabilistic \new{exact} algorithm for solving Problem
\eqref{mainprob}.  \new{The algorithm depends on some assumptions on
  input data that are specified later.}  If $\spec$ is not empty, the
expected output is a {\it rational parametrization} (see
e.g. \cite{rur}) of a finite set $\calZ \subset \CC^n$ meeting $\spec$
in at least one point $x^*$.  This is given by a vector
$(q_0,\ldots,q_{n+1}) \subset \ZZ[t]^{n+2}$ and a linear form
$\lambda=\lambda_1 x_1+\cdots+\lambda_nx_n\in \QQ[t]$ such that
$\deg(q_{n+1})=\sharp\calZ$, $\deg(q_i)<\deg(q_{n+1})$ for $0\leq i \leq n$,
${\rm gcd}(q_{n+1}, q_0)=1$ and $\calZ$ coincides with the set
\begin{equation}
\label{rationalparametrization}
\{(x_1, \ldots, x_n)\in \CC^n \mid t=\lambda_1x_1+\cdots+\lambda_n x_n, q_{n+1}(t)=0, x_i=q_i(t)/q_0(t)\}
\end{equation}
\new{A few remarks on this representation are in order. Usually, $q_0$
  is taken as $\frac{\partial q_{n+1}}{\partial t}$ for a better
  control on the size of the coefficients \cite[Theorem 1]{DS04} --
  see also the introductory discussion of that theorem in
  \cite{DS04}.}  \verynew{More precisely, if $D$ bounds the degrees of
  a finite family of polynomials in $\ZZ[x_1, \ldots, x_n]$ defining
  $\calZ$ and $h$ bounds the bit size of their coefficients and those
  of $\lambda$, then the coefficients of a rational parametrization
  encoding $\calZ$ have bit size bounded by $hD^n$.}

\verynew{This is to be compared with polynomial parametrizations where
  the rational fractions $q_i/q_0$ are replaced by polynomials $p_i$;
  they are obtained by inverting $q_0$ w.r.t. $q_{n+1}$ using the extended
  Euclidean algorithm. That leads to polynomials $p_i$ with bit size
  bounded by $hD^{2n}$.}

\verynew{In order to compute such representations, the usual and
  efficient strategy is to compute first the image of such
  representations in a prime field and next use a Newton-Hensel
  lifting to recover the integers. According to \cite[Lemma
  4]{GiLeSa01} and the above bounds, the cost of lifting integers is
  $\log$-linear in the output size. Since in the case of polynomial
  parametrizations, the output size may be $D^n$ times larger than in
  the case of rational parametrizations, rational parametrizations are
  easier to compute. In addition, observe {f}rom \cite[Lemma 3.4 and
  Theorem 3.12]{MS16} that isolating boxes for the real points in
  $\calZ$ from rational or polynomial representations have the same
  bit complexity (e.g. cubic in the degree of $q$ and $\log$-linear in
  the maximum bit size of the coefficients in the parametrization).}

\medskip

As an outcome of designing our algorithm, we also compute the minimum
rank attained by the pencil on the spectrahedron.  Moreover, since the
points in $\calZ$ are in one-to-one correspondence with the roots of
$q_{n+1}$, from this representation the coordinates of
the feasible point $x^* \in \spec$ can be computed with arbitrary
precision by isolating the corresponding solution $t^*$ of the
univariate equation $q_{n+1}(t)=0$. If $\spec$ is empty, the expected
output is the empty list.

\subsection{Motivations} \label{sec1:ssec1}

Semidefinite programming models a large number of problems in the applications \cite{RaGo,boydvan,bpt13}.
This includes one of the most important questions in computational
algebraic geometry, that is the general polynomial optimization problem. Indeed, Lasserre
\cite{lasserre01} proved that the problem of minimizing
a polynomial function over a semi-algebraic set can be relaxed to a sequence of primal-dual
semidefinite programs called LMI relaxations, and that under mild assumptions the sequence of solutions converge
to the original minimum. Generically, solving a non-convex polynomial optimization problem amounts to
solving a finite-dimensional convex semidefinite programming problem \cite{nie14}.
Numerical algorithms following this approach are available and,
typically, guarantees of their convergence are related to the feasibility (or
strict feasibility) of the LMI relaxations. It is, in general, a challenge to
obtain exact algorithms for deciding whether the feasible set of a semidefinite programming (SDP)
problem
\begin{equation}
\label{semdefprog}
\min_{x \in \RR^n} \sum_{i=1}^n c_ix_i \qquad \text{s.t.} \,\, A(x) \succeq 0
\end{equation}
is empty or not. The feasible set of the SDP \eqref{semdefprog} is defined by an LMI
and hence it is a spectrahedron.
Problem \eqref{mainprob} amounts to solving the feasibility problem for semidefinite
programming, in exact arithmetic: given a $\QQ-$definable semidefinite program as in \eqref{semdefprog}
(that is, we suppose that the coefficients of $A(x)$ have rational entries), decide
whether the feasible set $\spec = \{x \in \RR^n \mymid A(x) \succeq 0\}$ is empty or not,
and compute exactly at least one feasible point.
We would like to emphasize the fact that
we do not assume the existence of an interior point in $\spec$. Quite the opposite,
we are especially interested in degenerate cases for which the maximal rank
achieved by the pencil $A(x)$ in $\spec$ is small.

This work is a first step towards an exact approach to semidefinite programming. 
In particular, a natural perspective of this work is to design exact algorithms
for deciding whether the minimum in \eqref{semdefprog} is attained or not, and
for computing such a minimum in the affirmative case. While the number of iterations performed
by the ellipsoid algorithm \cite{ellipsoid} to compute the approximation of a solution
of \eqref{semdefprog} is polynomial in the number of variables, once the accuracy
is fixed, no analogous results for exact algorithms are available. Moreover,
since the intrinsic complexity of the optimization problem \eqref{semdefprog}
is related to its algebraic degree $\delta$ as computed in \cite{stu,SDPformula}, the paramount
goal is to design algorithms whose runtime is polynomial in $\delta$. The algorithm
of this paper shows experimentally such an optimal behavior with respect to $\delta$.

We finally recall that solving LMIs is a basic subroutine of
computer algorithms in systems control and optimization, especially in linear systems
robust control \cite{befb94,henNotes}, but also for the analysis or synthesis of nonlinear
dynamical systems \cite{sophie}, or in nonlinear optimal control with polynomial data
\cite{optimContr,claeys}.

\subsection{Contribution and outline} \label{sec1:ssec3}

We design a computer algebra algorithm for solving the feasibility
problem of semidefinite programming, that is Problem \eqref{mainprob},
in exact arithmetic.  Let us clarify that we do not claim that an
exact algorithm can be competitive with a numerical algorithm in terms
of admissible size of input problems: indeed, SDP solvers based on
interior-point methods \cite{bn01,nestnem} can nowadays handle inputs
with a high number of variables that are out of reach for our
algorithms. Our contribution can be summarized as follows:
\begin{enumerate}
\item we show that the geometry of spectrahedra understood as
  semi-algebraic sets with determinantal structure can be exploited to
  design dedicated computer algebra algorithms;
\item we give explicit complexity and output-degree upper bounds for
  computer algebra algorithms solving exactly the feasibility problem
  of semidefinite programming; \new{our algorithm is probabilistic and
    works under assumptions on the input, which are generically
    satisfied};
\item we provide results of practical experiments showing the gain in
  terms of computational timings of our contribution with respect to
  the state of the art in computer algebra;
\item remarkably, our algorithm does not assume that the input
  spectrahedron $\spec = \{x \in \RR^n \mymid A(x) \succeq 0\}$ is
  full-dimensional, and hence it can tackle also examples with empty
  interior.
\end{enumerate}

The main idea is to exploit the relation between the geometry of
spectrahedra, and that of the determinantal varieties associated to
the input symmetric pencil $A(x)$. Let us introduce, for
$r = 0, \ldots, m-1$, the algebraic sets
\[
\calD_r = \{x \in \CC^n \mymid \rank\,A(x)\leq r\}.
\]
These define a nested sequence
$\calD_0 \subset \calD_1 \subset \cdots \subset \calD_{m-1}$. \verynew{The dimension
of $\calD_r$ for generic linear matrices $A$ is known, and equals $n-\binom{m-r+1}{2}$
(see Lemma \ref{dimdetvarsym}).} The
Euclidean boundary $\partial \spec$ of $\spec$ is included in the real
trace of the last algebraic set of the sequence:
$\partial \spec \subset \calD_{m-1} \cap \RR^n.$ In particular, for
$x \in \partial \spec$, the matrix $A(x)$ is singular and one could
ask which elements of the real nested sequence
$\calD_0 \cap \RR^n \subset \cdots \subset \calD_{m-1} \cap \RR^n$
intersect $\partial \spec$.

\begin{notation} \label{minrank}
If $\spec = \{x \in \RR^n \mymid A(x) \succeq 0\}$ is not empty, we define the integer
$\minrank = \min \left\{\rank A(x) \mymid x \in \spec\right\}.$
\end{notation}

When $\spec$ is not empty, $\minrank$ equals the minimum integer $r$
such that $\calD_r \cap \RR^n$ intersects $\spec$.  We present our
first main result, which states that $\spec$ contains at least one of
the connected components of the real algebraic set
$\calD_\minrank \cap \RR^n$.  We denote by
${\mathbb{S}}_m^{n+1}(\QQ) = {\mathbb{S}}_m(\QQ) \times \cdots \times {\mathbb{S}}_m(\QQ)$ the
$(n+1)-$fold Cartesian product of ${\mathbb{S}}_m(\QQ)$.

\begin{theorem}[Smallest rank on a spectrahedron] \label{theo:spectra}
Suppose that $\spec \neq \emptyset$.
Let $\cc$ be a connected component of $\calD_\minrank \cap \RR^n$ such that
$\cc \cap \spec \neq \emptyset$. Then $\cc \subset \spec$ and hence $\cc \subset
(\calD_\minrank \setminus \calD_{\minrank-1}) \cap \RR^n$.
\end{theorem}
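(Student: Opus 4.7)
The plan is to pick a witness point $x_0 \in \cc \cap \spec$ and propagate positive semidefiniteness along continuous paths inside $\cc$, using the fact that the rank on $\calD_\minrank$ is at most $\minrank$ to force the potentially negative eigenvalues of nearby matrices to remain at zero.

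I would first observe that, by definition of $\minrank$, the witness $x_0$ satisfies $\rank A(x_0) = \minrank$: membership in $\calD_\minrank$ gives $\rank A(x_0) \leq \minrank$, while membership in $\spec$ combined with the minimality of $\minrank$ on $\spec$ gives $\rank A(x_0) \geq \minrank$. Hence $A(x_0)$ is positive semidefinite with exactly $\minrank$ strictly positive eigenvalues and $m - \minrank$ zero eigenvalues.

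Next, given any $y \in \cc$, I would invoke the classical fact that connected components of real algebraic sets (more generally, of semi-algebraic sets) are semi-algebraically path-connected to obtain a continuous path $\gamma \colon [0,1] \to \cc$ with $\gamma(0) = x_0$ and $\gamma(1) = y$. Set $T = \{t \in [0,1] \mymid A(\gamma(t)) \succeq 0\}$. Since $\spec$ is closed and $A \circ \gamma$ is continuous, $T$ is closed in $[0,1]$. I would then prove $T$ is also open; connectedness of $[0,1]$ and $0 \in T$ would then yield $T = [0,1]$, in particular $y \in \spec$, hence $\cc \subset \spec$.

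The openness step is the heart of the argument. For $t \in T$, the rank calculation above (applied to $\gamma(t) \in \cc \cap \spec$) shows $A(\gamma(t))$ has exactly $\minrank$ strictly positive and $m - \minrank$ zero eigenvalues. By continuous dependence of eigenvalues on matrix entries, for all $s$ in a small neighborhood of $t$, $A(\gamma(s))$ still has at least $\minrank$ strictly positive eigenvalues. But $\gamma(s) \in \cc \subset \calD_\minrank$ forces $\rank A(\gamma(s)) \leq \minrank$, leaving no room for any negative eigenvalue: the remaining $m-\minrank$ eigenvalues are pinned at $0$, and $A(\gamma(s)) \succeq 0$. This rank-versus-signature squeeze is the key point; the rest is standard topology. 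Finally, combining $\cc \subset \spec$ with $\cc \subset \calD_\minrank$, the same double bound gives $\rank A(x) = \minrank$ for every $x \in \cc$, i.e.\ $\cc \subset (\calD_\minrank \setminus \calD_{\minrank-1}) \cap \RR^n$.
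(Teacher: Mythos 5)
Your proof is correct, and the central mechanism is the same one the paper uses: at a point of $\cc \cap \spec$ the matrix has exactly $\minrank$ strictly positive eigenvalues, continuity of the ordered eigenvalues keeps those $\minrank$ eigenvalues positive nearby, and the constraint $\cc \subset \calD_\minrank$ caps the rank at $\minrank$, pinning the remaining $m-\minrank$ eigenvalues at zero. Where you differ is in the topological packaging. The paper argues by contradiction: it picks a boundary point $t_0$ of the set $T$, extracts a sequence $\tilde{x}_p \to \tilde{x}$ of points just outside $\spec$, tracks the index $\varphi(p)$ of the largest strictly negative eigenvalue, and invokes Bolzano--Weierstrass to show $\varphi$ must eventually vanish. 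You instead show directly that $T = \{t \mid A(\gamma(t)) \succeq 0\}$ is both closed and open in $[0,1]$ and conclude by connectedness. Your clopen formulation is cleaner and avoids the sequence-and-limit-point bookkeeping entirely; the only ingredients you need to cite explicitly are the semi-algebraic path-connectedness of connected components of real algebraic sets (which the paper also uses to produce the path $g$) and the continuity of the ordered eigenvalue functions. Both arguments are sound; yours is the more economical write-up of the same idea.
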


We give a proof of Theorem \ref{theo:spectra} in Section \ref{sec2}.
From Theorem \ref{theo:spectra}, we deduce the following mutually
exclusive conditions on the input symmetric linear pencil $A$: either
$\spec = \emptyset$, or $\spec$ contains one connected component $\cc$
of $\calD_\minrank \cap \RR^n$.  Consequently, an exact algorithm
whose output is one point in the component
$\cc \subset \spec \cap \calD_\minrank$ would be sufficient for our
goal.  Motivated by this fact, we design in Section \ref{sec3:ssec2}
an exact algorithm computing one point in each connected component of
$\calD_r \cap \RR^n$, for $r \in \{0,\ldots,m-1\}$. 

The strategy to compute sample points in $\calD_r \cap \RR^n$ is to
build an algebraic set $\setV_r \subset \CC^{n+m(m-r)}$ whose
projection on the first $n$ variables is contained in $\calD_r$. This
set is defined by the incidence bilinear relation $ A(x)Y(y)=0 $ where
$Y(y)$ is a full-rank $m \times (m-r)$ linear matrix whose columns
generate the kernel of $A(x)$ ({\it cf.} Section
\ref{sec3:ssec1}). Unlike $\calD_r$, the incidence variety $\setV_r$,
up to genericity conditions on the matrices $A_0,A_1,\ldots,A_n$,
turns to be generically smooth and equidimensional.  The next theorem
presents a complexity result for an exact algorithm solving Problem
\eqref{mainprob} under these genericity assumptions.

\begin{theorem}[Exact algorithm for LMI] \label{maintheo2} Suppose
  that for $0 \leq r \leq m-1$, the incidence variety $\calV_r$ is
  smooth and equidimensional and that its defining polynomial system
  generates a radical ideal. Suppose that $\calD_r$ has the expected
  dimension $n-\binom{m-r+1}{2}$. There is a
  probabilistic algorithm that takes $A$ as input and returns:
\begin{enumerate}
\item either the empty list, if and only if $\spec = \emptyset$, or
\item a vector $x^*$ such that $A(x^*)=0$, if and only if the linear
  system $A(x)=0$ has a solution, or
\item a rational parametrization
  $q = (q_0,\ldots,q_{n+1}) \subset \ZZ[t]$ such that there exists
  $t^* \in \RR$ with $q_{n+1}(t^*)=0$ and:
  \begin{itemize}
  \item $A(q_1(t^*)/q_0(t^*),\ldots,q_n(t^*)/q_0(t^*)) \succeq 0$ and
  \item $\rank A(q_1(t^*)/q_0(t^*),\ldots,q_n(t^*)/q_0(t^*)) = \minrank$.
  \end{itemize}
\end{enumerate}
The number of arithmetic operations performed are in 
\[
\softO\left( n\,\binom{\frac{m^2+m}{2}+n}{n}^6\,\sum_{r \leq m-1} \binom{m}{r}\,(n+(m-r)(m+3r))^{7}\right).
\]
If $\spec \neq \emptyset$, the degree of $q$ is in
$
\bigO\left(\binom{\frac{m^2+m}{2}+n}{n}^3\right).
$
\end{theorem}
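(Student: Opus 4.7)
The plan is to reduce Problem~\eqref{mainprob} to the computation of one point per connected component of $\calD_r \cap \RR^n$ for each $r = 0, \ldots, m-1$ and then use Theorem~\ref{theo:spectra} to ensure correctness. By Theorem~\ref{theo:spectra}, if $\spec \neq \emptyset$ then for $r = \minrank$ there is a connected component $\cc$ of $\calD_r \cap \RR^n$ entirely contained in $\spec$; conversely, for $r < \minrank$ no component of $\calD_r \cap \RR^n$ meets $\spec$. Hence, iterating $r$ from $0$ upwards, the smallest $r$ for which some sample point $x^*$ of $\calD_r \cap \RR^n$ satisfies $A(x^*) \succeq 0$ equals $\minrank$, and $x^*$ is a valid output; the positive semidefiniteness test can be performed symbolically via Descartes' rule of signs applied to the characteristic polynomial of $A(x^*)$ expressed through the rational parametrization. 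If no such $r$ exists, then $\spec = \emptyset$ and the algorithm returns the empty list.

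For the sample point computation I would work on the incidence variety $\calV_r \subset \CC^{n+m(m-r)}$ cut out by the bilinear system $A(x)Y(y) = 0$ together with the full-rank conditions on $Y(y)$ described in Section~\ref{sec3:ssec1}. Under the hypotheses of the theorem, $\calV_r$ is smooth, equidimensional, and defined by a radical ideal; its projection on the $x$-space is contained in $\calD_r$ and hits every connected component of $\calD_r \cap \RR^n$. I would then apply the standard critical-point method: pick a generic linear form $\lambda$, compute the critical locus of the projection $\pi_\lambda$ restricted to $\calV_r$ intersected with well-chosen generic linear sections, and obtain from it a zero-dimensional algebraic set $\calZ$ whose real points meet every connected component of $\calD_r \cap \RR^n$. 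This zero-dimensional set is encoded by a rational parametrization via the algorithms of \cite{rur} (or of \cite{GiLeSa01}), and the choice of $\lambda$ being generic is the source of the probabilistic nature of the algorithm.

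The degree bound and the complexity bound are obtained from the multilinear structure of the system defining $\calV_r$. The two sets of variables $(x, y)$ and the bilinear shape of $A(x)Y(y) = 0$ together with the block structure of the full-rank constraint give a natural multilinear B\'ezout bound of the form $\binom{m}{r}\,(n+(m-r)(m+3r))^{O(1)}$ for the degree of the critical locus on $\calV_r$; combining this with the \softO-complexity of the geometric resolution of zero-dimensional polynomial systems of such degrees in $n + m(m-r)$ variables and summing over $r \leq m-1$ yields the stated complexity. The factor $n\binom{(m^2+m)/2+n}{n}^6$ comes from the cost of a single rational-parametrization manipulation in $n$ variables of degrees up to the B\'ezout bound of $\binom{(m^2+m)/2+n}{n}$, and the $\softO$ accounts for the logarithmic factors in the lifting step.

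The main obstacle is the complexity analysis rather than correctness. Correctness follows cleanly from Theorem~\ref{theo:spectra} together with any sample-point routine that hits every connected component of $\calD_r \cap \RR^n$. The delicate part is to carry out the multilinear B\'ezout estimate for the critical loci on $\calV_r$ in the block variables $(x, y)$, to bound the cost of the PSD certification on the sample points produced, and to ensure that the genericity hypotheses on $\calV_r$ are exactly what is needed to apply the critical-point machinery on each slice so that the total work factors neatly through the $r$-indexed sum. Detailed bookkeeping of these bounds, together with the probabilistic genericity of $\lambda$, completes the proof.
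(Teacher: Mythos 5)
Your overall architecture matches the paper's: iterate $r$ from $0$ upward, use Theorem~\ref{theo:spectra} to certify that the first $r$ for which a sample point of $\calD_r \cap \RR^n$ is positive semidefinite equals $\minrank$, compute the sample points via the incidence varieties $\calV_r$, certify semidefiniteness by sign conditions on the characteristic polynomial, and bound degrees by multilinear B\'ezout counts fed into a symbolic homotopy. However, there are two genuine gaps where you invoke ``the standard critical-point method'' as a black box. First, computing the critical points of a single generic linear form on $\calV_r$ does \emph{not} yield a point in every connected component of $\calD_r \cap \RR^n$: the components are typically unbounded, the projection need not be proper, and the infimum of the linear form on a component may not be attained, in which case the critical locus misses that component entirely. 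The paper's subroutine {\sf LowRankSymRec} is therefore \emph{recursive in $n$}: it computes the critical points of $\pi_1(x,y)=x_1$ on each $\calV_r(A\circ M,\iota)$ after a generic change of variables $M$, then fixes $x_1=t$ for generic $t$ and recurses on the resulting $(n-1)$-variate pencil. Correctness of this scheme rests on Proposition~\ref{prop:closure:sym} (for generic $M$ the images $\pi_i(M^{-1}\cc)$ are closed and the boundary fibers are finite, following \cite{HNS2014}), which is precisely the ingredient your ``well-chosen generic linear sections'' would have to supply; it is also the source of the factor $n$ in the complexity (number of recursive calls), not the lifting step.

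Second, in the symmetric case the system $A(x)Y(y)=0$, $Y_\iota=\Id_{m-r}$ is \emph{not} a regular sequence: it has $m(m-r)+(m-r)^2$ equations while $\calV_r(A,\iota)$ has codimension only $m(m-r)+\binom{m-r+1}{2}$, i.e.\ there are $\binom{m-r}{2}$ redundant equations (Lemma~\ref{lemma:cleaningrelations}). Without explicitly eliminating these redundancies one cannot set up a Lagrange system satisfying property $\sfQ$ (the Jacobian would be rank-deficient everywhere on $\calV_r$), the dimension count for the critical locus fails, and the multilinear B\'ezout bound $\theta(m,n,r)$ would be computed with the wrong number of multipliers. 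The paper singles this out as the key new technical step relative to the non-symmetric case, and your proposal does not address it. Finally, a bookkeeping remark: the factor $(n+(m-r)(m+3r))^{7}$ is not a B\'ezout bound for the critical locus but the polynomial-in-the-number-of-variables cost of the symbolic homotopy of \cite{jeronimo2009deformation} combined with the degree of the homotopy curve (Lemma~\ref{mult:bounds:homotopy:sym}); the B\'ezout bound is $\theta(m,n,r)\leq\binom{p_r+n}{n}^3$, whose square gives the sixth power in the stated complexity.
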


\verynew{An important aspect of our contribution can be read from the complexity and degree
bounds in Theorem \ref{maintheo2}: indeed, remark that when $m$ is fixed, both the output degree
and the complexity of the algorithm are polynomial functions of $n$. Viceversa, by the constraint
$n\geq\binom{m-r+1}{2}$ (given by Lemma \ref{dimdetvarsym}), one also easily deduces that when $n$ is fixed, the complexity
is polynomial in $m$.}

The algorithm of Theorem \ref{maintheo2} is described in
Section~\ref{sec3}.  Its probabilistic nature comes from random
changes of variables performed during the procedure, allowing to put
the sets $\calD_r$ in generic position. We prove that for generic
choices of parameters the output of the algorithm is correct.

A complexity analysis is performed in Section \ref{sec4}. Bounds in
Theorem \ref{maintheo2} are explicit expressions involving $m$ and $n$.
These are computed by exploiting the multilinearity
of intermediate polynomial systems generated during the procedure, and
are not sharp in general. By experiments on randomly generated
symmetric pencils, reported in Section \ref{sec5}, we observe that the
output degree coincides with the algebraic degree of generic
semidefinite programs, that is with data given in
\cite[Table\,2]{stu}: this evidences the optimality of our approach.
We did not succeed in proving exact formulas for such degrees. 

\subsection{Related works} \label{sec1:ssec2} \verynew{On input $A$,
  $\spec$ can be defined by $m$ polynomial inequalities in
  $\QQ[x_1, \ldots, x_n]$ of degree $\leq m$ (see
  e.g. \cite{PowersWor}). As far as we know, the state-of-the-art for
  designing algorithms deciding the emptiness of $\spec$ consists only
  of algorithms for deciding the emptiness of general semi-algebraic
  sets; our contribution being the first attempt to exploit structural
  properties of the problem, e.g. through the smallest rank property
  (Theorem \ref{theo:spectra}).}

A first algorithmic solution to deciding the emptiness of general
semi-algebraic sets is given by Cylindrical Algebraic Decomposition
algorithm \cite{c-qe-1975}; however its runtime is doubly exponential
in the number $n$ of variables.  \verynew{The first singly exponential
  algorithm is given in \cite{GV88}, and has led to a series of works
  (see e.g. \cite{reneg, heiRoSol2}) culminating with algorithms
  designed in \cite{BaPoRo1996} based on the so-called critical points
  method. This method is based on the general idea which consists in
  computing minimizers/maximizers of a well-chosen function reaching
  its extrema in each connected component of the set under
  study. Applying \cite{BaPoRo1996} to problem \eqref{mainprob}
  requires $m^{O(n)}$ bit operations. Note that our technique for
  dealing with sets $\setV_r$ is based on the idea underlying the
  critical point method. Also, in the arithmetic complexity model, our
  complexity estimates are more precise (the complexity constant in
  the exponent is known) and better.}  \verynew{This technique is also
  related to algorithms based on polar varieties for grabbing sample
  points in semi-algebraic sets; see for example
  \cite{BGHSS,BGHM1,SaSc03,SaSc04} and its application to polynomial
  optimization \cite{greuet2}. }

\verynew{To get a purely algebraic certificate of emptiness for $\spec$, one
could use the classical approach by Positivstellensatz
\cite{laurent,putinar,schmudgen}.  As a snake biting its tail, this
would lead to a family, or {hierarchy}, of semidefinite programs
\cite{lasserre01}. Bounds for the degree of Positivstellensatz
certificates are exponential in the number of variables and have been
computed in \cite{mark5} for Schmudgen's, and in \cite{mark4} for
Putinar's formulation.  In the recent remarkable result
\cite{lombPerRoy}, a uniform $5-$fold exponential bound for the degree
of the Hilbert 17th problem is provided. In \cite{mark3}, an emptiness
certificate dedicated to the spectrahedral case, by means of special
quadratic modules associated to these sets, is obtained.
}

\verynew{All the above algorithms do not exploit the particular
  structure of spectrahedra understood as determinantal semi-algebraic
  sets.  In \cite{khapor}, the authors showed that deciding emptiness
  of $\spec$ can be done \new{in time $\bigO(nm^4)+m^{\bigO(\min(n,m^2))}$}, that is in
  \new{polynomial time in $m$ (resp. linear time in $n$) if $n$
    (resp. $m$) is fixed}.  The main drawback of this algorithm is
  that it is based on general procedures for quantifier elimination,
  and hence it does not lead to efficient practical implementations.
  Note also that the complexity constant in the exponent is still
  unknown.  }

\verynew{Also, in \cite{ratLMI}, a version of \cite{SaZhi10} dedicated
  to spectrahedra exploiting some of their structural properties,
  decides whether a linear matrix inequality $A(x) \succeq 0$ has a
  rational solution, that is whether $\spec$ contains a point with
  coordinates in $\QQ$.  Remark that such an algorithm is not
  sufficient to solve our problem, since, in some degenerate but
  interesting cases, $\spec$ is not empty but does not contain
  rational points: in Section \ref{sec5:ssec2} we will illustrate the
  application of our algorithm to one of these examples.}

\verynew{As suggested by the smallest rank property, determinantal
  structures play an important role in our algorithm. This structure
  has been recently exploited in \cite{fauspasafey} and \cite{FSS12}
  for the fast computation of Gr\"obner bases of zero-dimensional
  determinantal ideals and computing zero-dimensional critical loci of
  maps restricted to varieties in the generic case.}

\verynew{ Exploiting determinantal structures for determinantal
  situations remained challenging for a long time.  In \cite{HNS2014}
  we designed a dedicated algorithm for computing sample points in the
  real solution set of the determinant of a square linear matrix. This
  has been extended in \cite{HNS2015b} to real algebraic sets defined
  by rank constraints on a linear matrix. Observe that this problem
  looks similar to the ones we consider thanks to the smallest rank
  property. As in this paper, the traditional strategy consists in
  studying incidence varieties for which smoothness and regularity
  properties are proved under some {\it genericity assumptions} on the
  input linear matrix.}

\verynew{Hence, in the case of symmetric matrices, these results
  cannot be used anymore. Because of the structure of the matrix, the
  system defining the incidence variety involves too many equations;
  some of them being redundant. Hence, these redundancies need to be
  eliminated to characterize critical points on incidence varieties in
  a convenient way. In the case of Hankel matrices, the special
  structure of their kernel provides an efficient way to do that. This
  case study is done in \cite{HNS2015a}. Yet, the problem of
  eliminating these redundancies remained unsolved in the general
  symmetric case and this is what we do in Section~\ref{sec3} which is
  the starting point of the design of our dedicated algorithm.}

\subsection{Basic notation}

We refer to \cite{BaPoRo06,CLO,harris1992algebraic,Eisenbud95} for the
algebraic-geometric background of this paper.  We recall below some
basic definitions and notation.  We denote by $\MM_{p,q}(\QQ)$ the
space of $p \times q$ rational matrices, and $\GL_n(\CC)$ the set of
$n \times n$ non-singular matrices. The transpose of
$M \in \MM_{p,q}(\QQ)$ is $M^T$.  The cardinality of a finite set $T$
(resp. the number of entries of a vector $v$) are denoted by
$\sharp T$ (resp. $\sharp v$).

Let $x=(x_1,\ldots,x_n)$. A vector $f=(f_1,\ldots,f_s) \subset \QQ[x]$ is a polynomial system, $\langle f \rangle \subset \QQ[x]$
its ideal and $\zeroset{\langle f \rangle}=\{x \in \CC^n \mymid f_i(x)=0, i=1,\ldots,s\}$ the associated algebraic set.
Sets $\zeroset{\langle f \rangle}$ define the collection of closed sets of the Zariski topology of $\CC^n$. The intersection of
a Zariski closed and a Zariski open set is called a locally closed set. For $M \in \GL_n(\CC)$ and $\calZ \subset \CC^n$, let
$M^{-1}\calZ=\{x \in \CC^n \mymid M\,x \in \calZ\}$. With $\ideal{S}$ we denote the ideal of polynomials vanshing on $S \subset \CC^n$.

Let $f=(f_1,\ldots,f_s) \subset \QQ[x]$. Its Jacobian matrix is denoted by $\jac f = ({\partial f_i}/{\partial x_j})_{i,j}$.
An algebraic set $\calZ \subset \CC^n$ is irreducible if $\calZ = \calZ_1 \cup \calZ_2$ where $\calZ_1,\calZ_2$
are algebraic sets, implies that either $\calZ=\calZ_1$ or $\calZ=\calZ_2$. Any algebraic
set is the finite union of irreducible algebraic sets, called
its irreducible components. The codimension $c$ of an irreducible algebraic set $\calZ \subset \CC^n$
is the maximum rank of $\jac f$ on $\calZ$, where $\ideal{\calZ} = \langle f
\rangle$. Its dimension is $n-c$. If all the irreducible components of $\calZ$ have the
same dimension, we say that $\calZ$ is equidimensional.
The dimension of an algebraic set $\calZ$ is the maximum of the dimensions of its irreducible components,
and it is denoted by $\dim \calZ$. The degree of an equidimensional
algebraic set $\calZ$ of codimension $c$ is the maximum cardinality of finite intersections
$\calZ \cap {\cal L}$ where ${\cal L}$ is a linear space of dimension $c$. The degree of an algebraic
set is the sum of the degrees of its equidimensional components.

Let $\calZ \subset \CC^n$ be equidimensional of codimension $c$, and let $\ideal{\calZ} =
\left\langle f_1, \ldots, f_s \right\rangle$. The singular locus of $\calZ$,
denoted by $\sing(\calZ)$, is the algebraic set defined by $f=(f_1, \ldots, f_s)$
and by all $c \times c$ minors of $\jac f$. 
If $\sing(\calZ)=\emptyset$ we
say that $\calZ$ is smooth, otherwise singular. The points in
$\sing(\calZ)$ are called singular, while points in $\reg(\calZ) = \calZ \setminus
\sing(\calZ)$ are called regular.
Let $\calZ \subset \CC^n$ be smooth and equidimensional of codimension $c$, and
let $\ideal{\calZ} = \left\langle f_1, \ldots, f_s \right\rangle$.
Let $g : \CC^n \to \CC^m$ be an algebraic map.
The set of critical points of the restriction of $g$ to $\calZ$ is the algebraic set denote by
$\crit(g,\calZ)$ and defined by $f=(f_1, \ldots, f_s)$ and by all $c+m$ minors of the
Jacobian matrix $\jac (f,g).$
The points in $g(\crit(g,\calZ))$ are called critical values,
while points in $\CC^m \setminus g(\crit(g,\calZ))$ are called the regular values, of the restriction
of $g$ to $\calZ$.

\section{The smallest rank on a spectrahedron} \label{sec2}

We prove Theorem \ref{theo:spectra}, which relates the geometry of linear matrix inequalities to the
rank stratification of the defining symmetric pencil. We believe that the statement of this theorem
is known to the community of researchers working on real algebraic geometry and semidefinite
optimization; however, we did not find any explicit reference in the literature.

\begin{proof}[of Theorem \ref{theo:spectra}]
By assumption, the rank of $A(x)$ on $\spec$ is greater or equal than $\minrank$.
We consider the vector function
$
  \begin{array}{lrcc}
  e = (e_1, \ldots, e_m): &  \RR^n & \longrightarrow & \RR^m \\
  \end{array}
$
where $e_1(x) \leq \ldots \leq e_m(x)$ are the ordered eigenvalues
of $A(x)$. Let $\cc$ be a connected component of $\calD_{\minrank} \cap \RR^n$
such that $\cc \cap \spec \neq \emptyset$, and let $x \in \cc \cap \spec$.
One has $\rank A(x) = \minrank$ and 
$
e_1(x) = \ldots = e_{m-\minrank}(x) = 0 < e_{m-\minrank+1}(x) \leq \ldots \leq e_m(x).
$
Suppose {\new{\it ad absurdum}} that there exists $y \in \cc$ such that $y \notin \spec$.
In particular, one eigenvalue of $A(y)$ is strictly negative.

Let $g \colon [0,1] \to \cc$ be a continuous semi-algebraic map such that $g(0)=x$
and $g(1)=y$. This map exists since $\cc$ is a connected component of a real algebraic
set. The image $g([0,1])$ is compact and semi-algebraic. Let
$$
T=\{t \in [0,1] \mymid g(t) \in \spec \} = g^{-1}(g([0,1]) \cap \spec).
$$
Since $g$ is continuous, $T \subset [0,1]$ is closed. So it is a finite union of
closed intervals. Since $0 \in T$ ($g(0)=x \in \spec$) there exists
$t_0 \in [0,1]$ and $N \in \mathbb{N}$ such that $[0,t_0] \in T$ and for all
$p \geq N$, $t_0+\frac{1}{p} \notin T$.
One gets that $g(t_0) = \tilde{x} \in \spec$ and that for all $p \geq N$,
$g(t_0+\frac{1}{p}) = \tilde{x}_p \notin \spec$. By definition, $\tilde{x}, \tilde{x}_p
\in \cc \subset \calD_{\minrank} \cap \RR^n$ for all $p \geq N$, and since $\tilde{x} \in \spec$,
we get $\rank A(\tilde{x}) = \minrank$ and $\rank A(\tilde{x}_p) \leq \minrank$ for all $p \geq N$.
We also get that $\rank A(g(t))=\minrank$ for all $t \in [0, t_0]$.
We finally have $\tilde{x}_p \to \tilde{x}$ when $p \to +\infty$, since $g$ is continuous.
There exists a map
$$
\varphi \colon \Big\{p \in \mathbb{N} \mymid p \geq N \Big\} \to \ZZ
$$
which assigns to $p$ the index of eigenvalue-function among
$e_1, \ldots, e_m$ corresponding to the maximum
strictly negative eigenvalue of $A(\tilde{x}_p)$, if it exists, otherwise
it assigns $0$.
Remark that since $\rank A(\tilde{x}_p) \leq \minrank$ for all
$p$, then $\varphi(p) \leq \minrank$ for all $p$.
In other words, the eigenvalues of $A(\tilde{x}_p)$ satisfy
\begin{align*}
e_1(\tilde{x}_p) \leq \ldots \leq e_{\varphi(p)}(\tilde{x}_p) < 0 = e_{\varphi(p)+1}(\tilde{x}_p) = \ldots = e_{\varphi(p)+m-\minrank}(\tilde{x}_p) \\
0 \leq e_{\varphi(p)+m-\minrank+1}(\tilde{x}_p) \leq \ldots \leq e_m(\tilde{x}_p),
\end{align*}
for $p \geq N$.
Since the sequence $\{\varphi(p)\}_{p \geq N}$ is bounded, up to taking a subsequence, it admits at least
a limit point by the Bolzano-Weierstrass Theorem \cite[Th.\,3.4.8]{bartle}, \new{this
point is an integer, and $j \mapsto \varphi(j)$ is constant for large $j$.}
Suppose that there exists a limit point $\ell > 0$, and let
$\{p_j\}_{j \in \mathbb{N}}$ such that $\varphi(p_j) \rightarrow \ell$ and that for $j \geq N'$,
$j \mapsto \varphi(p_j)$ is constant.
Thus, $0 = e_{\ell+1}(\tilde{x}_{p_j}) = \ldots = e_{\ell+m-\minrank}(\tilde{x}_{p_j})$
for all $j \geq N'$. Since $\tilde{x}_{p_j} \to \tilde{x}$, and since $e_1, \ldots, e_m$
are continuous functions, we obtain that $\ell=0$ is the unique
limit point of $\varphi$, hence $\varphi$ converges to $0$. Hence $\varphi \equiv 0$
for large $p$. \new{This contradicts the fact that $\tilde{x}_p \notin \spec$ for large
$p$.}

We conclude that the set $\cc \setminus \spec$ is empty, that is
$\cc \subset \spec$. By the minimality of the integer $\minrank$ in
$\{\rank A(x) \mymid x \in \spec\}$, one  deduces
that $\cc \subset (\calD_{\minrank} \setminus \calD_{\minrank-1}) \cap \RR^n$.
\end{proof}

\section{Algorithm} \label{sec3}

Our algorithm is called {\sf SolveLMI}, and it is presented in Section \ref{sec3:ssec3}.
Before, we describe in Section \ref{sec3:ssec2} its main subroutine {\sf LowRankSym}, which is of recursive
nature and computes one point per connected component of the real algebraic set
$\calD_r \cap \RR^n$. We start, in the next section, with some preliminaries.

\subsection{Preliminaries} \label{sec3:ssec1}

\subsubsection*{Expected dimension of low rank loci}

We first recall a known fact about the dimension of $\calD_r$, when $A$ is a generic symmetric pencil.

\begin{lemma}
\label{dimdetvarsym}
There exists a non-empty Zariski open subset $\zarA \subset {\mathbb{S}}_{m}^{n+1}(\CC)$
such that, if $A \in \zarA \cap {\mathbb{S}}_{m}^{n+1}(\QQ)$, for all $r=0,\ldots, m-1$, the
set $\calD_r$ is either empty or it has dimension $n-\binom{m-r+1}{2}.$
\end{lemma}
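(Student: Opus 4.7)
The plan is to reduce the statement to the classical fact that the variety $V_r \subset {\mathbb{S}}_m(\CC)$ of symmetric $m \times m$ complex matrices of rank at most $r$ is irreducible of codimension $\binom{m-r+1}{2}$ (see e.g.\ Harris, \emph{Algebraic Geometry: A First Course}, Example 12.3), and then to transfer this codimension to $\calD_r$ via an incidence variety combined with a dimension-of-fibers argument.

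First I would build the incidence variety
\[
{\mathcal{W}}_r := \bigl\{(A,x) \in {\mathbb{S}}_m^{n+1}(\CC) \times \CC^n \mymid A(x) \in V_r\bigr\}.
\]
Using the explicit bijection $V_r \times {\mathbb{S}}_m(\CC)^n \times \CC^n \to \mathcal{W}_r$ sending $(M,A_1,\ldots,A_n,x)$ to $(M - \sum_{i=1}^n x_i A_i, A_1, \ldots, A_n, x)$, one sees at once that $\mathcal{W}_r$ is irreducible and that
\[
\dim \mathcal{W}_r = \dim V_r + n\binom{m+1}{2} + n = (n+1)\binom{m+1}{2} + n - \binom{m-r+1}{2}.
\]
In particular $\mathcal{W}_r$ has codimension $\binom{m-r+1}{2}$ in the ambient product.

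Second I would apply the theorem on dimensions of fibers to the projection $\pi : \mathcal{W}_r \to {\mathbb{S}}_m^{n+1}(\CC)$. Let $Y_r := \overline{\pi(\mathcal{W}_r)}$. Since $\mathcal{W}_r$ is irreducible, there exists a non-empty Zariski open $U_r \subset Y_r$ over which every fiber has dimension exactly $\dim \mathcal{W}_r - \dim Y_r$. If $\pi$ is dominant, then $Y_r = {\mathbb{S}}_m^{n+1}(\CC)$ and the fiber $\pi^{-1}(A) = \calD_r$ has the expected dimension $n - \binom{m-r+1}{2}$ for every $A \in U_r$; I then set $\zarA_r := U_r$. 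If $\pi$ is not dominant, then $\zarA_r := {\mathbb{S}}_m^{n+1}(\CC) \setminus Y_r$ is a non-empty Zariski open subset on which $\calD_r$ is empty, and the statement is vacuous.

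Finally I would set $\zarA := \bigcap_{r=0}^{m-1} \zarA_r$, which is still a non-empty Zariski open subset of ${\mathbb{S}}_m^{n+1}(\CC)$ and which satisfies the conclusion for all ranks $r$ simultaneously. The main subtlety is the classical codimension computation for the symmetric determinantal locus $V_r$ itself; once this is invoked, the irreducibility of $\mathcal{W}_r$ follows directly from the explicit parametrization, and the rest is a routine application of Chevalley's theorem on constructible images together with the generic fiber-dimension theorem.
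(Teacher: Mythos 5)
Your argument is correct. Note that the paper does not actually write out a proof here: it simply cites the classical reference (Prop.~3.1 of the Anjos--Lasserre handbook), and your incidence-variety construction $\mathcal{W}_r \cong V_r \times {\mathbb{S}}_m(\CC)^n \times \CC^n$ combined with the generic fiber-dimension theorem for the projection onto ${\mathbb{S}}_m^{n+1}(\CC)$ is exactly the standard argument underlying that citation; the dichotomy you draw between the dominant and non-dominant cases also correctly absorbs the situation $n < \binom{m-r+1}{2}$, where $\dim\mathcal{W}_r$ forces non-dominance and hence generic emptiness of $\calD_r$.
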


\begin{proof}
The proof is classical and can be found {\it e.g.} in  \cite[Prop.\,3.1]{anjlas}.
%
\end{proof}

\subsubsection*{Incidence varieties}
Let $A(x)$ be a symmetric $m \times m$ linear matrix, and let $0 \leq r \leq m-1$.
Let $y=(y_{i,j})_{1 \leq i \leq m, 1 \leq j \leq m-r}$ be unknowns. Below, we build an algebraic set
whose projection on the $x-$space is contained in $\calD_r$. Let
\[
Y(y)=
\left(
\begin{array}{ccc}
y_{1,1} & \hspace{-0.2cm} \cdots \hspace{-0.2cm} & y_{1,m-r} \\
\vdots & \hspace{-0.2cm} \hspace{-0.2cm}        & \vdots  \\
y_{m,1} & \hspace{-0.2cm} \cdots \hspace{-0.2cm} & y_{m,m-r}
\end{array}
\right),
\]
and let $\iota=\{i_1, \ldots, i_{m-r}\} \subset \{1, \ldots, m\}$, with $\sharp \iota = m-r$.
We denote by $Y_{\iota}$ the $(m-r) \times (m-r)$ sub-matrix of $Y(y)$ obtained by
isolating the rows indexed by $\iota$. There are $\binom{m}{r}$ such matrices. We define
the set
\[
\calV_r(A, \iota) = \{(x, y) \in \CC^n \times \CC^{m(m-r)} \mymid A(x) Y(y) = 0, Y_{\iota} - \Id_{m-r} = 0\}.
\]
We denote by $f(A, \iota)$, or simply by $f$, when there is no ambiguity on $\iota$, the
polynomial system defining $\calV_r(A, \iota)$. \new{We often consider linear changes of variables $x$:} for $M \in \GL_n(\CC)$,
$f(A \circ M, \iota)$ denotes the entries of $A(M\,x) Y(y)$ and $Y_\iota-\Id_{m-r}$, and by $\calV_r(A \circ M, \iota)$
its zero set.
We also denote by $U_\iota \in \MM_{m-r,m}(\QQ)$ the full rank matrix whose entries are in $\{0,1\}$,
and such that $U_\iota Y(y) = Y_\iota$. By simplicity we call $U_\iota$ the {\it boolean matrix} with
multi-index $\iota$.

\new{By definition, the projection of $\calV_r(A,\iota)$ on the first $n$ variables is contained
in $\calD_r$.}
We remark the similarity between the relation $A(x)Y(y)=0$ and the so-called {\it complementarity
condition} for a couple of primal-dual semidefinite programs, see for example \cite[Th.\,3]{stu}.
The difference in our model is that the special size of $Y(y)$ and the affine constraint $Y_\iota=
\Id_{m-r}$ force a rank condition on $Y(y)$ and hence on $A(x)$.

\subsubsection*{Eliminating redundancies}

The system $f(A,\iota)$ contains redundancies induced by polynomial relations between its generators.
These relations can be explicitly eliminated in order to obtain a minimal polynomial system defining
$\calV_r$.

\begin{lemma} \label{lemma:cleaningrelations}
Let $M \in \GL_n(\CC)$. Let $\iota \subset \{1, \ldots, m\}$, with $\sharp\iota=m-r$. Let $A \in {\mathbb{S}}_{m}^{n+1}(\QQ)$,
and $f \in \QQ[x,y]^{m(m-r)+(m-r)^2}$ be the polynomial system defined in Section \ref{sec3:ssec1}. Then we can
explicitly construct a subsystem $f_{red} \subset f$ of length $m(m-r)+\binom{m-r+1}{2}$ such that
$\langle f_{red} \rangle = \langle f \rangle$.
\end{lemma}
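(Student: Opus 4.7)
The strategy is to exploit the symmetry of $A(x)$ in order to locate $\binom{m-r}{2}$ generators of $f$ that are redundant. After relabeling (which commutes with the construction of $f$), assume $\iota = \{1,\ldots,m-r\}$, set $\bar\iota = \{m-r+1,\ldots,m\}$, and block-partition
\[
A(x) = \begin{pmatrix} A_{11} & A_{12} \\ A_{12}^T & A_{22} \end{pmatrix}, \qquad Y(y) = \begin{pmatrix} Y_\iota \\ Y_{\bar\iota} \end{pmatrix},
\]
with $A_{11}$ of size $(m-r)\times(m-r)$ and $A_{22}$ of size $r\times r$. The product $A(x)Y(y)$ then decomposes into a top block $B := A_{11}Y_\iota + A_{12}Y_{\bar\iota}$ of size $(m-r)\times(m-r)$ and a bottom block $C := A_{12}^T Y_\iota + A_{22}Y_{\bar\iota}$ of size $r\times(m-r)$. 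This partitions the generators of $f$ into three natural groups: the $(m-r)^2$ entries of $B$, the $r(m-r)$ entries of $C$, and the $(m-r)^2$ entries of $Y_\iota - \Id_{m-r}$.

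The redundancies are extracted from a single identity. The matrix $Y(y)^T A(x) Y(y)$ is symmetric because $A^T = A$; direct expansion in the block structure above gives $Y^T A Y = Y_\iota^T B + Y_{\bar\iota}^T C$, and equating this with its transpose yields the polynomial identity $Y_\iota^T B - B^T Y_\iota = C^T Y_{\bar\iota} - Y_{\bar\iota}^T C$ in $\QQ[x,y]$. Reducing modulo $\langle Y_\iota - \Id_{m-r} \rangle$ produces the key relation
\[
B - B^T \;\equiv\; C^T Y_{\bar\iota} - Y_{\bar\iota}^T C \pmod{\langle Y_\iota - \Id_{m-r} \rangle},
\]
so for every $i<j$ the strictly upper-triangular entry $B_{ij}$ becomes, modulo $\langle Y_\iota - \Id_{m-r} \rangle$, a polynomial expression in $B_{ji}$ and the entries of $C$ (with coefficients in $\QQ[y_{\bar\iota}]$).

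I would then take $f_{red}$ to be $f$ with the $\binom{m-r}{2}$ generators $B_{ij}$, $i<j$, removed. The identity places every removed generator in the ideal generated by what remains, so $\langle f_{red} \rangle = \langle f \rangle$, and counting gives
\[
|f_{red}| = m(m-r) + (m-r)^2 - \binom{m-r}{2} = m(m-r) + \binom{m-r+1}{2},
\]
using the trivial identity $(m-r)^2 - \binom{m-r}{2} = \binom{m-r+1}{2}$. The argument applies verbatim when $A$ is replaced by $A\circ M$ for any $M \in \GL_n(\CC)$, since $A(Mx)$ remains a symmetric linear matrix in $x$. The only non-routine step I expect is precisely the recognition of the correct symmetry identity for $Y^T A Y$: once it is spotted, the ideal-membership argument and the combinatorial count are automatic, and the figure $\binom{m-r}{2}$ matches exactly the number of independent relations among the off-diagonal entries of a symmetric $(m-r)\times(m-r)$ matrix, which makes it plausible that no further reduction is available by the same mechanism.
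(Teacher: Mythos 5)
Your proof is correct and arrives at exactly the same reduced system as the paper: you drop the $\binom{m-r}{2}$ strictly upper-triangular entries of the top $(m-r)\times(m-r)$ block of $A(x)Y(y)$ and certify their redundancy by the congruence $B_{ij}\equiv B_{ji}$ modulo the ideal generated by the bottom block $C$ and $Y_\iota-\Id_{m-r}$, which is precisely the paper's claim $g_{i,j}\equiv g_{j,i} \bmod \langle g_{k,\ell},\,k>m-r\rangle$. The only difference is how that congruence is obtained: the paper proves it entrywise, solving the bottom-block equations for $a_{i,\ell}$ and checking that the resulting sum $\sum_{\ell,t}a_{\ell,t}(y_{t,j}y_{\ell,i}-y_{t,i}y_{\ell,j})$ vanishes by antisymmetry, whereas you read it off in one stroke from the symmetry of $Y^TAY$ via the identity $Y_\iota^TB-B^TY_\iota=C^TY_{\bar\iota}-Y_{\bar\iota}^TC$. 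Your derivation is a clean repackaging of the same computation (the antisymmetric sum in the paper is exactly the entry of $C^TY_{\bar\iota}-Y_{\bar\iota}^TC$ after substitution), so the two arguments are essentially identical in substance, with yours slightly more conceptual.
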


\begin{proof}
In order to simplify notations and without loss of generality we suppose $M=\Id_n$ and $\iota = \{1, \ldots, m-r\}$.
We substitute $Y_\iota=\Id_{m-r}$ in $A(x) Y(y)$, and we denote by $g_{i,j}$ the $(i,j)-$th entry of the resulting
matrix. We denote by $f_{red}$ the following system:
\[
\new{f_{red} = \left(g_{i,j} \,\, \text{for} \,\, i \geq j, \,\,Y_\iota-\Id_{m-r}\right).}
\]
We claim that, for $1 \leq i \neq j \leq m-r$, $g_{i,j} \equiv g_{j,i} \mod \langle g_{k,\ell},  k>m-r \rangle$,
which implies that $f_{red}$ verifies the statement. Let $a_{i,j}$ denote the $(i,j)-$th entry of $A(x)$. Let $i<j$
and write
\[
g_{i,j} = a_{i,j} + \sum_{\ell=m-r+1}^m a_{i,\ell} y_{\ell,j}
\,\,\, {\rm and} \,\,\,
g_{j,i} = a_{j,i} + \sum_{\ell=m-r+1}^m a_{j,\ell} y_{\ell,i}.
\]
We deduce that $g_{i,j} - g_{j,i} = \sum_{\ell=m-r+1}^m {a_{i,\ell} y_{\ell,j} - a_{j,\ell} y_{\ell,i}}$
since $A$ is symmetric. Also, modulo the ideal $\left\langle g_{k,\ell}, \, k>m-r \right\rangle$,
and for $\ell \geq m-r+1$, one can explicit $a_{i,\ell}$ and $a_{j,\ell}$, by using polynomial
relations $g_{\ell,i}=0$ and $g_{\ell,j}=0$, as follows:
\begin{align*}
g_{i,j} - g_{j,i}
& \equiv \sum_{\ell=m-r+1}^m \left(-\sum_{t=m-r+1}^{m}a_{\ell,t}y_{t,i}y_{\ell,j}+\sum_{t=m-r+1}^{m}a_{\ell,t}y_{t,j}y_{\ell,i} \right) \equiv \\
& \equiv \sum_{\ell,t=m-r+1}^m a_{\ell,t} \left( -y_{t,i}y_{\ell,j}+y_{t,j}y_{\ell,i} \right) \equiv 0 \,\,\, \mod \left\langle g_{k,\ell}, \, k>m-r \right\rangle.
\end{align*}
The previous congruence concludes the proof.
\end{proof}

We prove below in Proposition \ref{prop:regularity:sym} and in Corollary \ref{cor:completeInt} that,
up to genericity assumptions, the ideal $\langle f \rangle = \langle f_{red} \rangle$ is radical and that
$\sharp f_{red}$ matches the codimension of $\calV_r$. In the next example, we explicitly write
down the redundancies shown in Lemma \ref{lemma:cleaningrelations} for a simple case.
\begin{example}\label{exampleredundancies}
We consider a $3 \times 3$ symmetric matrix of unknowns, and
the kernel corresponding to the configuration $\{1,2\} \subset \{1,2,3\}$.
Let
\[
\left(
\begin{array}{ccc}
f_{11} & f_{12} \\
f_{21} & f_{22} \\
f_{31} & f_{32} 
\end{array}
\right)
=
\left(
\begin{array}{ccc}
x_{1} & x_{2} & x_{3} \\
x_{2} & x_{4} & x_{5} \\
x_{3} & x_{5} & x_{6}
\end{array}
\right)
\left(
\begin{array}{ccc}
1 & 0 \\
0 & 1 \\
y_{31} & y_{32} 
\end{array}
\right).
\]
We consider the classes of polynomials $f_{12},f_{21}$ modulo ${\langle f_{31}, f_{32}  \rangle}$, deducing the following linear
relation:
$
f_{12}-f_{21} = y_{32}x_{3}-y_{31}x_{5} \equiv y_{31}x_{6}y_{32}-y_{32}x_{6}y_{31} = 0.
$
\end{example}

\subsubsection*{Lagrange systems}

Let $f(A,\iota)$ be the polynomial system defining $\calV_r(A,\iota)$. We set
$c = m(m-r)+\binom{m-r+1}{2}$ {and} $e = \binom{m-r}{2}$, so that \new{$\calV_r$ is defined
by} $c=\sharp f_{red}$
\new{polynomial equations, and $e=\sharp f-c$ is the
number of redundancies eliminated by Lemma \ref{lemma:cleaningrelations}}.
We define, for $M \in \GL_n(\CC)$, the polynomial system $\ell = \ell(A \circ M, \iota)$, given by
the coordinates of the map
\[
  \begin{array}{lrcc}
    \ell : &  \CC^{n} \times \CC^{m(m-r)} \times \CC^{c+e} & \longrightarrow & \CC^{n+m(m-r)+c+e} \\
            &  (x,y,z) & \longmapsto & \left(f(A \circ M, \iota), z^T \jac f(A \circ M, \iota) - (e_1^T, 0) \right),
  \end{array}
\]
where $e_1 \in \QQ^n$ is the first \new{column of the identity matrix $\Id_n$}.
We also define $\calZ(A \circ M, \iota) = \zeroset{\ell(A \circ M, \iota)}$.
\new{When $\calV_r(A \circ M, \iota)$ is smooth and equidimensional, $\calZ(A \circ M, \iota)$
encodes the critical points of the restriction of $\Pi_1(x,y)=x_1$ to $\calV_r(A \circ M, \iota)$.}

\subsubsection*{Output representation}

The output of the algorithm is a rational parametrization $(q_0,\ldots,q_{n+1}) \subset \ZZ[t]$ such
that the finite set defined in \eqref{rationalparametrization} contains at least one point
on the spectrahedron $\spec$.

\subsection{Real root finding for symmetric low rank loci} \label{sec3:ssec2}

We describe the main subroutine {\sf LowRankSym}, which is a variant for
symmetric pencils of the algorithms in \cite{HNS2014,HNS2015a,HNS2015b}.
\new{Its output is a finite set meeting each connected component of $\calD_r \cap \RR^n$}.
It takes advantage of the particular properties of the incidence
varieties over a symmetric low rank locus, as highlighted by
Lemma \ref{lemma:cleaningrelations}.

\subsubsection*{Properties}
We define the following properties for a given $A \in {\mathbb{S}}_m^{n+1}(\QQ)$:

{\it Property $\sfP_1$}. We say that $A$ satisfies $\sfP_1$ if, for all $\iota \subset \{1,\ldots,m\}$,
with $\sharp\iota = m-r$, the incidence variety $\calV_r(A,\iota)$ is either empty or smooth and equidimensional.

{\it Property $\sfP_2$}. We say that $A$ satisfies $\sfP_2$ if, for all $r$, $\calD_r$ has the expected
dimension $n-\binom{m-r+1}{2}$. 
Property $\sfP_2$ holds generically in ${\mathbb{S}}_m^{n+1}(\QQ)$, as shown by Lemma \ref{dimdetvarsym}.

We also define the following property for a polynomial system $f \subset \QQ[x]$ and
a Zariski open set $\zarO \subset \CC^n$:

{\it Property $\sfQ$}. Suppose that $f \subset \QQ[x]$ generates a radical ideal and that it defines an algebraic
set of codimension $c$, and let $\zarO \subset \CC^n$ be a Zariski open set. We say that $f$ satisfies $\sfQ$ in
$\zarO$, if the rank of $\jac f$ is $c$ in $\zeroset{\langle f \rangle} \cap \zarO$.

\subsubsection*{Formal description of {\sf LowRankSym}}

The formal description of our algorithm is given next. \new{It consists of a main algorithm which checks the
genericity hypotheses, and of a recursive sub-algorithm called {\sf LowRankSymRec}}.

\vspace{0.2cm}

{
\begin{center}
\boxed{
  \begin{varwidth}{11cm}
    {\large\begin{center}{${\sf LowRankSym}$}\end{center}}

    \vspace{0.1cm}

    {\bf Input:}
    $A \in {\mathbb{S}}_m^{n+1}(\QQ)$, encoded by the $m(m+1)(n+1)/2$ entries of $A_0, A_1, \ldots, A_n$,
    and an integer $1 \leq r \leq m-1$; \smallskip

    {\bf Output:} Either the empty list $[\,\,]$, if and only if $\calD_r \cap \RR^n = \emptyset$,
    or an error message stating that the genericity assumptions are not satisfied, 
    or a rational parametrization $q=(q_0, \ldots, q_{n+1}) \subset \ZZ[t]$, such
    that for every connected component $\cc \subset \calD_r \cap \RR^n$, with $\cc \cap \calD_{r-1}=\emptyset$,
    there exists $t^* \in \zeroset{q_{n+1}}\cap \RR$ with $(q_1(t^*)/q_0(t^*), \ldots, q_n(t^*)/q_0(t^*))
    \in \cc$. \smallskip \smallskip

    {{\bf Procedure} {\sf LowRankSym}$(A,r)$}
    \begin{enumerate}
    \item \label{sym:step:low:1}
      if $n<\binom{m-r+1}{2}$ then
      \begin{itemize}
      \item \new{if $\dim \calD_r = -1$ then return $[\,\,]$, else return(``the input is not generic'');}
      \end{itemize} 
    \item \label{sym:step:low:2}
      for $\iota \subset \{1, \ldots, m\}$ with $\sharp \iota = m-r$ do
      \begin{itemize}
      \item \label{sym:step:low:2a}
        if {\sf IsReg}($(A, \iota)$) = {\tt false}
        then return(``the input is not generic'');
      \end{itemize}
    \item \label{sym:step:low:3}
      return(${\sf LowRankSymRec}(A,r)$).
    \end{enumerate}

    \vspace{0.1cm}

    {{\bf Procedure} {\sf LowRankSymRec}$(A,r)$}

    \begin{enumerate}
    \item \label{sym:step:lowrec:1}
      choose $M \in \GL_n(\QQ)$;
    \item \label{sym:step:lowrec:2}
      $q \leftarrow [\,\,]$; for $\iota \subset \{1, \ldots, m\}$ with $\sharp\iota=m-r$ do
      \begin{itemize}
      \item \label{sym:step:lowrec:2a}
        $q_\iota \leftarrow  {\sf Image}({\sf Project}({\sf RatPar}(\ell(A \circ M, \iota))),M^{-1})$;
      \item \label{sym:step:lowrec:2b}
        $q \leftarrow {\sf Union}(q,q_\iota)$;
      \end{itemize}
    \item \label{sym:step:lowrec:3}
      choose $\fiber \in \QQ$; $A \leftarrow (A_0+tA_1, A_2, \ldots, A_n)$;
    \item \label{sym:step:lowrec:4}
      $q' \leftarrow {\sf Lift}({\sf LowRankSymRec}(A,r),\fiber)$;
    \item \label{sym:step:lowrec:5}
      return(${\sf Union}(q,q')$).
    \end{enumerate}

  \end{varwidth}
}
\end{center}
}

\vspace{0.2cm}

The routines appearing in the previous algorithm are described next:

\begin{itemize}
\item {\sf IsReg}. {\it Input:} $A \in {\mathbb{S}}_m^{n+1}(\QQ), \iota \subset \{1,\ldots,m\}$;
   {\it Output:} {\tt true} if $\calV_r(A,\iota)$ is empty or smooth and equidimensional of
    codimension $m(m-r) + \binom{m-r+1}{2}$, {\tt false} otherwise.
\item {\sf Project}. {\it Input:} A rational parametrization of $\ell(A \circ M, \iota) \subset \QQ[x,y,z]$;
{\it Output:} an error message if the projection of $\calZ(A\circ M,\iota) \cap \{(x,y,z) \mymid \rank A(M\,x)=r\}$
on the $x-$space is not finite; otherwise a rational parametrization of this projection.
\item \new{{\sf RatPar}. {\it Input:} $\ell(A \circ M, \iota) \subset \QQ[x,y,z]$; {\it Output}: a rational
parametrization of $\ell(A \circ M, \iota)$.}
\item {\sf Image}. {\it Input:} a rational parametrization of a set $\calZ \subset \QQ[x_1, \ldots, x_N]$ and
  a matrix $M \in \GL_N(\QQ)$; {\it Output:} a rational parametrization of $M^{-1}\calZ = \{x \in \CC^N \mymid M\,x \in \calZ\}$.
\item {\sf Union}. {\it Input:} two rational parametrizations encoding $\calZ_1,\calZ_2 \subset \QQ[x_1, \ldots, x_N]$;
{\it Output:} a rational parametrization of $\calZ_1 \cup \calZ_2$.
\item {\sf Lift}.
{\it Input:} a rational parametrization of a set $\calZ \subset \QQ[x_1, \ldots, x_N]$, and $\fiber \in \CC$;
{\it Output:} a rational parametrization of $\{(\fiber,x) \mymid x \in \calZ\}$. 
\end{itemize}

\subsection{Main algorithm: description} \label{sec3:ssec3}
The input of {\sf SolveLMI} is a linear matrix $A \in {\mathbb{S}}_{m}^{n+1}(\QQ)$, and the algorithm makes use
of algorithm {\sf LowRankSym} described previously, as a subroutine.
The formal description is the following.

\vspace{0.2cm}

\begin{center}
\boxed{
{
  \begin{varwidth}{11cm}
    {\large\begin{center}{${\sf SolveLMI}$}\end{center}}

    \vspace{0.1cm}

    {\bf Input:}
    $A \in {\mathbb{S}}_m^{n+1}(\QQ)$, encoded by the $m(m+1)(n+1)/2$ entries of $A_0, A_1, \ldots, A_n$; \smallskip


    {\bf Output:} Either the empty list $[\,\,]$, if and only if $\{x \in \RR^n \mymid A(x) \succeq 0\}$ is empty;
    or an error message stating that the genericity assumptions are not satisfied, or, otherwise, either a vector
    $x^*=(x^*_1, \ldots, x^*_n)$ such that $A(x^*)=0$, or a rational
    parametrization $q=(q_0, \ldots, q_{n+1}) \subset \ZZ[t]$, such that there exists
    $t^* \in \zeroset{q_{n+1}}\cap\RR$ with $A(q_1(t^*)/q_0(t^*), \ldots, q_n(t^*)/q_0(t^*)) \succeq 0$. \smallskip \smallskip


    {\bf Procedure {\sf SolveLMI}$(A)$} 
    \begin{enumerate}
    \item \label{sym:step:main:1}
      $x^* \leftarrow {\sf SolveLinear}(A)$; if $x^* \neq [\,\,]$ then return($x^*$);
    \item \label{sym:step:main:2}
      for $r$ from $1$ to $m-1$ do:
      \begin{itemize}
      \item \label{sym:step:main:2a}
        $q \leftarrow {\sf LowRankSym}(A,r)$;
      \item \label{sym:step:main:2aa}
        if $q=$ ``the input is not generic'' then return($q$);
      \item \label{sym:step:main:2b}
        if $q \neq [\,\,]$ then $b \leftarrow {\sf CheckLMI}(A,q)$;
      \item \label{sym:step:main:2c}
        if $b={\tt true}$ then return($q$);
      \end{itemize}
    \item \label{sym:step:main:3}
      return($[\,\,]$, ``the spectrahedron is empty'').
    \end{enumerate}
  \end{varwidth}
}}
\end{center}

\vspace{0.2cm}

The different subroutines of {\sf SolveLMI} are described next:
\begin{itemize}
\item {\sf SolveLinear}. {\it Input}: $A \in {\mathbb{S}}_m^{n+1}(\QQ)$; {\it Output} the empty list if
  $A(x)=0$ has no solution, otherwise $x^*$ such that $A(x^*)=0$;
\item {\sf CheckLMI}. {\it Input}: $A \in {\mathbb{S}}_m^{n+1}(\QQ)$ and a rational parametrization
  $q \subset \ZZ[t]$; {\it Output}: {\tt true} if $A(q_1(t^*)/q_0(t^*), \ldots, q_n(t^*)/q_0(t^*)) \succeq 0$
  is satisfied for some $t^* \in \zeroset{q_{n+1}}\cap\RR$, {\tt false} otherwise.
\end{itemize}

\subsection{Main algorithm: correctness} \label{sec3:ssec4}
We prove in Theorem \ref{theo:correctness:sym} that {\sf SolveLMI} returns a correct output if genericity
properties on input data and on random parameters chosen during its execution are satisfied; the proof relies
on some preliminary results that are described before. The proofs of these intermediate results \new{share some
techniques with \cite{HNS2014,HNS2015a,HNS2015b} and are given in Section \ref{section4}}.
\subsubsection*{Intermediate results}
The first result is a regularity theorem for the incidence varieties $\calV_r(A,\iota)$. We focus on
property $\sfP_1$ for the input matrix $A$ ({\it cf.} Section \ref{sec3:ssec2}).

\begin{proposition} \label{prop:regularity:sym}
Let $m,n,r \in \NN$, with $0 \leq r \leq m-1$.
\begin{enumerate}
\item
There exists a non-empty Zariski-open set $\zarA \subset
{\mathbb{S}}_m^{n+1}(\CC)$ such that if $A \in \zarA \cap {\mathbb{S}}_m^{n+1}(\QQ)$,
then $A$ satisfies $\sfP_1$;
\item
if $A$ satisfies $\sfP_1$, there exists a non-empty Zariski open set
  $\zarfiber \subset \CC$ such that if $\fiber \in \zarfiber
  \cap \QQ$, \new{then $(A_0+\fiber A_1,A_2,\ldots,A_n)$} satisfies $\sfP_1$.
\end{enumerate}
\end{proposition}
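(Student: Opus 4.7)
The plan is to prove both statements via a generic transversality argument based on the algebraic Sard theorem, applied to suitably defined universal incidence varieties.

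For the first assertion, I would introduce, for each multi-index $\iota \subset \{1,\ldots,m\}$ with $\sharp\iota = m-r$, the total space
\[
\tilde{\calV}_r(\iota) = \left\{(A,x,y) \in {\mathbb{S}}_m^{n+1}(\CC) \times \CC^n \times \CC^{m(m-r)} \mymid f_{red}(A,\iota)(x,y) = 0\right\}.
\]
The essential step is to prove that $\tilde{\calV}_r(\iota)$ is smooth and equidimensional of codimension $c = \sharp f_{red} = m(m-r)+\binom{m-r+1}{2}$. Because $A$ enters linearly in $f_{red}$ and the condition $Y_\iota = \Id_{m-r}$ forces $Y(y)$ to have full column rank $m-r$, one can inspect the partial Jacobian with respect to the entries of $A_0$ alone: varying the entries of $A_0$ not in the block indexed by $\iota$ perturbs independently the $c$ surviving equations of $f_{red}$, producing $c$ linearly independent columns at every point of $\tilde{\calV}_r(\iota)$. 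By the Jacobian criterion this yields smoothness and equidimensionality of codimension $c$. Applying the algebraic Sard theorem to the projection $\pi: \tilde{\calV}_r(\iota) \to {\mathbb{S}}_m^{n+1}(\CC)$ produces a non-empty Zariski open $\zarA_\iota$ over which the fiber $\pi^{-1}(A) = \calV_r(A,\iota)$ is empty or smooth and equidimensional of codimension $c$. Setting $\zarA = \bigcap_\iota \zarA_\iota$ over the finitely many multi-indices yields Part 1.

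For the second assertion, the key observation is that fixing $x_1 = \fiber$ in $A(x) = A_0 + x_1 A_1 + \cdots + x_n A_n$ produces the pencil $(A_0+\fiber A_1) + x_2 A_2 + \cdots + x_n A_n$. The fiber over $\fiber$ of the coordinate projection
\[
\pi_1 : \calV_r(A,\iota) \longrightarrow \CC, \qquad (x_1,\ldots,x_n,y) \longmapsto x_1,
\]
is therefore canonically identified with $\calV_r\bigl((A_0+\fiber A_1, A_2,\ldots,A_n),\iota\bigr)$. Since $A$ satisfies $\sfP_1$ by hypothesis, $\calV_r(A,\iota)$ is smooth and equidimensional, and generic smoothness applied to $\pi_1$ yields a non-empty Zariski open $\zarfiber_\iota \subset \CC$ of regular values. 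For $\fiber \in \zarfiber_\iota$, the fiber $\pi_1^{-1}(\fiber)$ is empty or smooth and equidimensional of codimension $c$ in its ambient space $\CC^{n-1}\times\CC^{m(m-r)}$. The intersection $\zarfiber = \bigcap_\iota \zarfiber_\iota$ completes the argument.

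The main obstacle lies in the smoothness step of Part 1: since $f_{red}$ has already discarded redundant equations (Lemma \ref{lemma:cleaningrelations}) and the parameter space enforces the symmetry $a^{(k)}_{ij} = a^{(k)}_{ji}$, not every coordinate perturbation of a matrix entry is free. The block decomposition of $A$ induced by the partition $\{1,\ldots,m\} = \iota \sqcup \iota^c$ must be exploited to extract $c$ genuinely independent directions in ${\mathbb{S}}_m^{n+1}(\CC)$ along which the distinguished entries $g_{ij}$ with $i \geq j$ vary independently. Once this bookkeeping is carried out, both parts follow from the transversality principle sketched above.
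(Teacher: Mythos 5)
Your overall strategy is the same as the paper's: for Part~1, regard the reduced incidence equations as a map on $\CC^{n+m(m-r)}\times{\mathbb{S}}_m^{n+1}(\CC)$, show that $0$ is a regular value (equivalently, that the total space $\tilde{\calV}_r(\iota)$ is smooth and equidimensional of codimension $c=\sharp f_{red}$), and conclude by Thom's weak transversality theorem applied to the projection onto the parameter space; for Part~2, apply Sard to the coordinate projection $\Pi_1$ and identify its fiber over $\fiber$ with $\calV_r\bigl((A_0+\fiber A_1,A_2,\ldots,A_n),\iota\bigr)$. Your Part~2 is complete and coincides with the paper's argument.

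The problem is that the one computation carrying all the weight in Part~1 is wrong as stated, and you then defer it as ``bookkeeping.'' You claim that the partial Jacobian of $f_{red}$ with respect to the entries of $A_0$ alone has rank $c=m(m-r)+\binom{m-r+1}{2}$ at every point of the total space. This is impossible: the $(m-r)^2$ equations $Y_\iota-\Id_{m-r}=0$ are part of $f_{red}$ and do not involve $A_0$, so the corresponding rows of that partial Jacobian vanish identically and its rank is at most $c-(m-r)^2=(m-r)(m+r+1)/2$. The correct nonsingular $c\times c$ block (the one the paper exhibits) combines two groups of columns: (i) the derivatives of the $(m-r)(m+r+1)/2$ retained entries $g_{i,j}$, $i\geq j$, of $A(x)Y(y)$ with respect to the entries $a_{0,i,j}$ having \emph{at least one index in} $\iota$ (for $\iota=\{1,\ldots,m-r\}$, those with $i\leq m-r$ or $j\leq m-r$; their number $\binom{m+1}{2}-\binom{r+1}{2}$ matches the number of retained equations, whereas your ``entries of $A_0$ not in the block indexed by $\iota$'' selects the complementary set and gives the wrong count), and (ii) the derivatives of $Y_\iota-\Id_{m-r}$ with respect to the variables $y_{i,j}$ with $i\in\iota$, which form an identity block of size $(m-r)^2$. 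Since your sketch both miscounts the available directions in ${\mathbb{S}}_m^{n+1}(\CC)$ and omits the $y$-columns entirely, the smoothness of the total space --- hence the existence of $\zarA$ --- is not established by your argument.
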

Next, we prove that the projection of $\calZ(A \circ M, \iota) \cap \{(x,y,z) \mymid \rank A(M\,x) = r\}$
over the $x-$space is finite and that this set meets the critical points of the restriction
of the map $\Pi_1 \colon (x,y) \to x_1$ to $\calV_r(A,\iota)$.

\begin{proposition} \label{prop:dimension:sym}
Let $A \in {\mathbb{S}}_m^{n+1}(\QQ)$ satisfy $\sfP_1$. Then there exists a non-empty Zariski open set $\zarM_1 \subset
\GL_n(\CC)$ such that, if $M \in \zarM_1 \cap \MM_{n,n}(\QQ)$, for all $\iota \subset \{1,\ldots,m\}$, with
$\sharp\iota=m-r$, the following holds:
\begin{enumerate}
\item The system $\ell(A \circ M, \iota)$ satisfies $\sfQ$ in $\{(x,y,z) \mymid
  \rank A(M\,x) = r\}$;
\item the projection of $\calZ(A \circ M, \iota) \cap \{(x,y,z) \mymid \rank
  A(M\,x) = r\}$ on the $x-$space is empty or finite;
\item the projection of $\calZ(A \circ M, \iota) \cap \{(x,y,z) \mymid \rank
  A(M\,x) = r\}$ on $(x,y)$ contains the set of critical points of the restriction
  of $\Pi_1 \colon (x,y) \to x_1$ to $\calV_r(A \circ M, \iota) \cap \{(x,y) \mymid
  \rank A(M\,x) = r\}$.
\end{enumerate}
\end{proposition}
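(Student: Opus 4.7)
The plan is to combine a Thom weak transversality argument (in the spirit of \cite{HNS2014,HNS2015a,HNS2015b}) with a careful accounting of the $e = \binom{m-r}{2}$-fold redundancy between $f$ and $f_{red}$ identified in Lemma~\ref{lemma:cleaningrelations}. All three statements will drop out from smoothness of the incidence variety, the Lagrange multiplier characterization of critical points, and a dimension count for the system $\ell$.

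First, I would fix $\iota$ with $\sharp\iota = m-r$ and observe that $\sfP_1$ is preserved under any change of variables $x \mapsto Mx$: the map $(x,y) \mapsto (M^{-1}x,y)$ is a diffeomorphism sending $\calV_r(A,\iota)$ isomorphically onto $\calV_r(A\circ M,\iota)$, so the latter is smooth and equidimensional of codimension $c = m(m-r)+\binom{m-r+1}{2}$. By Lemma~\ref{lemma:cleaningrelations}, the $e$ ``redundant'' generators of $f$ lie in the ideal generated by $f_{red}$, so by the Leibniz rule, at every point of $\calV_r(A\circ M,\iota)$ the corresponding $e$ rows of $\jac f$ are $\QQ[x,y]$-linear combinations of the $c$ rows of $\jac f_{red}$. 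Consequently, on $\calV_r(A\circ M,\iota)$ the row span of $\jac f$ coincides with that of $\jac f_{red}$, has rank exactly $c$, and its left kernel is $e$-dimensional.

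Statement~(3) is then immediate from the Lagrange characterization: a point $(x^*,y^*)$ in the smooth locus $\calV_r(A\circ M,\iota)\cap\{\rank A(Mx)=r\}$ is critical for $\Pi_1\colon(x,y)\mapsto x_1$ iff $(e_1^T,0)$ lies in the row span of $\jac f$ at $(x^*,y^*)$, iff $(x^*,y^*,z^*) \in \calZ(A\circ M,\iota)$ for some $z^* \in \CC^{c+e}$. For statement~(2), I would apply Thom weak transversality to the incidence
\[
\left\{(M,x,y)\in\GL_n(\CC)\times\calV_r(A,\iota) \;:\; (M^{-T}e_1,0)\in N^*_{(x,y)}\calV_r(A,\iota)\right\},
\]
whose projection on $M$-space is dominant with generically $0$-dimensional fibers; this produces a non-empty Zariski-open $\zarM_1 \subset \GL_n(\CC)$ such that for $M \in \zarM_1 \cap \MM_{n,n}(\QQ)$ the critical locus of $\Pi_1$ on $\calV_r(A\circ M,\iota)\cap\{\rank A(Mx)=r\}$ is finite; projecting to $x$ then yields a finite set.

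For property $\sfQ$ in statement~(1), the codimension of $\calZ(A\circ M,\iota)\cap\{\rank A(Mx)=r\}$ equals $n+m(m-r)+c$: its $(x,y)$-projection is finite by~(2), and over each critical $(x^*,y^*)$ the linear equation $z^T\jac f=(e_1^T,0)$ has an $e$-dimensional affine solution space (exploiting the $e$-dimensional left kernel of $\jac f$). Writing $\jac\ell$ in block form with $\jac f$ in the top left and $(\jac f)^T$ in the bottom right, the top block contributes $c$ independent rows by smoothness, the bottom-right block contributes $c$ more in the $z$-columns, and the remaining $n+m(m-r)-c$ bottom rows must project independently into the first $n+m(m-r)$ columns modulo the top row span by genericity of $M$. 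This yields rank exactly $n+m(m-r)+c$, matching the codimension; radicality of $\langle\ell\rangle$ on the open set follows from the Jacobian criterion at smooth points. The main obstacle is this last independence claim: verifying that for generic $M$ the second-order (Hessian-in-$z$) part of $\jac\ell$ does not conspire with the tangent bundle of $\calV_r$ to cause a rank drop, which is where $\zarM_1$ must be refined beyond its role in statement~(2).
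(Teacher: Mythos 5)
Your reduction of the problem to a generic covector $w=M^{-T}e_1$, your use of Lemma~\ref{lemma:cleaningrelations} to show that on $\calV_r$ the row span of $\jac f$ equals that of $\jac f_{red}$ (so that $\jac f$ has rank $c$ and an $e$-dimensional left kernel), and your proof of Assertion~3 via the Lagrange characterization all match the paper. But there is a genuine gap, and it sits exactly at the point you flag yourself: the constant-rank claim for $\jac\ell$ needed for property $\sfQ$ in Assertion~1. Your block decomposition correctly predicts the target rank $n+m(m-r)+c$, but the contribution of the bottom-left block $z^T H$ (second derivatives of $f$ contracted with $z$) modulo the row span of $\jac f$ is precisely what must be proved, and no argument is given. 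The paper resolves this by first rewriting the local equations of $\calV_r$ on the chart $\{\det N\neq 0\}$ via the Schur complement, so that $\jac\tilde f$ exhibits explicit identity blocks, and then applying Thom's weak transversality to the map $(x,y,z,w)\mapsto(\tilde f,\tilde g,\tilde h)$ with $w$ as a \emph{parameter}: the columns of derivatives with respect to $w$ are what certify that $0$ is a regular value, and this is how the Hessian block is controlled for generic $w$. Without some such parametrized transversality (or an equivalent device), $\sfQ$ --- including radicality of $\langle\ell\rangle$, which the Jacobian criterion only delivers \emph{after} the maximal-rank property is known --- is not established.

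A second, related issue is that your logical order is inverted and your substitute proof of Assertion~2 is incomplete. The paper proves (1) first and then deduces (2) by the theorem on the dimension of fibers: $\calZ\cap\{\rank A(Mx)=r\}$ is $e$-equidimensional by (1), each fiber of $\pi_x$ over its image is an $e$-dimensional affine space of multipliers, hence the image is finite. You instead try to prove (2) directly from the incidence $\{(M,x,y): (M^{-T}e_1,0)\in N^*_{(x,y)}\calV_r\}$ and assert that its fibers over $\GL_n(\CC)$ are generically finite. That dimension count is not automatic: the covector is constrained to lie in the coordinate subspace $\CC^n\times\{0\}$, which is a non-generic linear section of the conormal variety, so the fiber dimension $\dim\{w:(w,0)\in N^*_{(x,y)}\}$ must be bounded pointwise (equivalently, one must control the rank of $d\pi_x$ on $T\calV_r$), and you do not do this. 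Since your Assertion~1 then relies on the codimension count imported from (2), the two gaps compound rather than cancel. I would recommend restructuring along the paper's lines: prove the regularity of the full Lagrange system first, by transversality in the parameter $w$, and let finiteness of the $x$-projection fall out of the fiber-dimension theorem.
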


Finally, we show, after a generic linear change of variables $x$, closure properties of the projection maps $\pi_i(x)=(x_1,\ldots,x_i)$
restricted to $\calD_r$. Also, in order to compute sample points on the connected components of $\calD_r \cap \RR^n$ not
meeting $\calD_{r-1}$, the next proposition shows that to do that it is sufficient to compute critical points on
the incidence variety $\calV_r$.

\begin{proposition} \label{prop:closure:sym}
Let $A \in {\mathbb{S}}_m^{n+1}(\QQ)$ satisfy $\sfP_1$, and let $d = \dim \calD_r$.
There exists a non-empty Zariski open set $\zarM_2 \subset \GL_n(\CC)$ such
that if $M \in \zarM_2 \cap \MM_{n,n}(\QQ)$, for any connected component
$\cc \subset \calD_r \cap \RR^n$, the following holds:
\begin{enumerate}
\item for $i=1, \ldots, d$, $\pi_i(M^{-1}\cc)$ is closed; further, for $\fiber \in
  \RR$ lying on the boundary of $\pi_1(M^{-1}\cc)$, then $\pi_1^{-1}(\fiber)
  \cap M^{-1} \cc$ is finite;
\item  let $\fiber$ lie on the boundary of $\pi_1(M^{-1}\cc)$: for $x \in \pi_1^{-1}(\fiber)
  \cap M^{-1}\cc$, such that $\rank A(M\,x) = r$, there exists $\iota \subset \{1, \ldots,
  m\}$, with $\sharp\iota=m-r$, and $(x, y) \in \calV_r(A \circ M, \iota)$, such that $\Pi_1(x,y) =
  \fiber$.
\end{enumerate}
\end{proposition}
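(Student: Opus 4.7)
The plan is to split the proof into the two parts claimed by the proposition. Part 1 is a Noether-position style statement saying that, after a generic linear change of coordinates, the projections $\pi_i$ restricted to $M^{-1}\cc$ are ``proper enough'' to have closed images and to have finite fibres over boundary values of $\pi_1$. Part 2 is a more algebraic statement relating boundary points to the incidence variety. I will define the Zariski-open set $\zarM_2$ as the intersection of two such sets, one for each part, both constructed via standard Noether/dimension arguments.

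For Part 1, the idea is to pass to the projective closure. Embed $\CC^n$ in $\PP^n$ with coordinates $[x_0:x_1:\ldots:x_n]$, and let $\overline{\calD_r}\subset\PP^n$ denote the projective closure. I will use the classical fact that there exists a non-empty Zariski open set $\zarM_2^{(1)} \subset \GL_n(\CC)$ such that for $M\in \zarM_2^{(1)}$, the intersection of $M\cdot\overline{\calD_r}$ with the hyperplane at infinity $\{x_0=0\}$ has the expected dimension $d-1$, and more generally, for each $1\le i\le d$ the intersection of $M\cdot\overline{\calD_r}$ with the linear subspace ``$x_0=x_{i+1}=\cdots=x_n=0$'' has the expected dimension. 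This dimension control forces any unbounded sequence in $M^{-1}\cc$ whose $\pi_i$-image converges in $\RR^i$ to contradict the generic dimension count at infinity, so $\pi_i(M^{-1}\cc)$ is closed in $\RR^i$. The finiteness of $\pi_1^{-1}(\fiber)\cap M^{-1}\cc$ at boundary values follows because such $\fiber$ is a critical value of the restriction of $\pi_1$ to $\calD_r$: under $\sfP_1$, the regularity assumption combined with a generic change of variables (another Zariski-open condition) makes the critical locus of $\pi_1|_{\reg(\calD_r)}$ zero-dimensional by a standard Bertini-type argument applied to the polar variety, and the singular locus has dimension $<d$, so its intersection with $M^{-1}\cc$ contributes only finitely many points above $\fiber$.

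For Part 2, let $\fiber$ lie on the boundary of $\pi_1(M^{-1}\cc)$ and $x \in \pi_1^{-1}(\fiber)\cap M^{-1}\cc$ with $\rank A(Mx)=r$. By Part 1 applied to $\pi_1$, $x$ is in particular a critical point of the restriction of $\pi_1$ to the smooth locus $\reg(\calD_r)$ (otherwise $\pi_1$ would be a local submersion at $x$ and $\fiber$ would be interior). Since $\rank A(Mx)=r$, the kernel $K = \ker A(Mx)\subset \CC^m$ has dimension $m-r$. Picking any matrix $Y\in\MM_{m,m-r}(\CC)$ whose columns span $K$, there exists $\iota\subset\{1,\ldots,m\}$ with $\sharp\iota=m-r$ such that the submatrix $U_\iota Y$ is invertible; this is simply the statement that $Y$, having rank $m-r$, admits an invertible $(m-r)\times(m-r)$ minor. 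Setting $y = Y\,(U_\iota Y)^{-1}$ gives $A(Mx)\,y=0$ and $U_\iota y = \Id_{m-r}$, so $(x,y)\in\calV_r(A\circ M,\iota)$ and $\Pi_1(x,y)=x_1=\fiber$, as required.

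The main obstacle is the closure statement in Part 1 for $1\le i<d$: one has to be careful because $\pi_i(M^{-1}\cc)$ is a projection of a single real connected component and not of the whole complex variety, so one cannot directly invoke the classical closedness of projective maps. The argument must combine the projective-closure/dimension-at-infinity bound with the semi-algebraic fact that a convergent sequence whose projections converge and which stays in a closed real component either has a convergent subsequence in $\RR^n$ or contributes a point at infinity, and the latter is excluded generically. Writing down the precise non-empty Zariski open conditions on $M$ that achieve simultaneously the transversality-at-infinity statements for all $i\in\{1,\ldots,d\}$, for all choices of $\iota$, and the zero-dimensionality of the polar variety of $\pi_1|_{\calD_r}$, is where the technical work is concentrated; the resulting $\zarM_2$ will be the finite intersection of these Zariski-open sets.
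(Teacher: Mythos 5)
Your Part 2 is correct and is essentially the paper's argument: since $\rank A(M\,x)=r$ the kernel of $A(M\,x)$ has dimension $m-r$, any rank-$(m-r)$ basis matrix $Y$ of it admits an invertible block $Y_\iota$ for some $\iota$ with $\sharp\iota=m-r$, and right-multiplication by $Y_\iota^{-1}$ produces $(x,y)\in\calV_r(A\circ M,\iota)$ with $\Pi_1(x,y)=\fiber$.

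Part 1, however, has a genuine gap, located exactly where you flag ``the main obstacle''. Transversality at infinity can give properness of $\pi_i$ restricted to $\calD_r$ only when $i\geq d$: the locus of directions at infinity annihilated by $\pi_i$ is $\overline{\calD_r}\cap\{x_0=x_1=\cdots=x_i=0\}$, whose expected dimension is $(d-1)-i$; for $i<d$ this is $\geq 0$, the intersection is generically \emph{non-empty}, and no choice of $M$ prevents sequences in $M^{-1}\cc$ from escaping to infinity while their $\pi_i$-images stay bounded. So ``the latter is excluded generically'' is false for $1\le i<d$ (already for $i=1$ when $d\ge 2$), and your argument does not establish closedness of $\pi_i(M^{-1}\cc)$ in that range. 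Likewise, for the finiteness of $\pi_1^{-1}(\fiber)\cap M^{-1}\cc$, knowing that the polar variety of $\pi_1$ on $\reg(\calD_r)$ is generically finite and that $\sing(\calD_r)$ has dimension $<d$ is not enough: $\sing(\calD_r)$ contains $\calD_{r-1}$, is typically positive-dimensional, and could a priori have an infinite fiber over $\fiber$. What is needed --- and what the paper invokes from \cite[Prop.\,17--18, Lemma\,19]{HNS2014} --- is the recursion $\calO_d(M^{-1}\calD_r)=M^{-1}\calD_r$, $\calO_i=\zarS(\calO_{i+1})\cup\zarC(\pi_{i+1},\calO_{i+1})\cup\zarC(\pi_{i+1},M^{-1}\calD_r)$, which iterates singular loci and critical loci: for generic $M$ each $\calO_i$ has dimension $\le i$ and is in Noether position with respect to $x_1,\ldots,x_i$ (so properness is applied to $\calO_i$, not to $\calD_r$), and the boundary of $\pi_i(M^{-1}\cc)$ is shown to lie in $\pi_i(\calO_{i-1}(M^{-1}\calD_r)\cap M^{-1}\cc)$; this yields closedness for every $i$ and, for $i=1$, finiteness because $\calO_0$ is at most zero-dimensional. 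Your sketch contains the seed of this (polar varieties, descent in dimension) but does not carry out the recursion, and without it the statement does not follow.
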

\subsubsection*{Theorem of correctness}
Let $A \in {\mathbb{S}}_{m,m}^{n+1}(\QQ)$ be the input of {\sf SolveLMI}. We say that hypothesis $\sfH$ holds if:
\begin{itemize}
\item
$A$ and all parameters generated by {\sf SolveLMI} belong to the Zariski open sets defined in Proposition
\ref{prop:regularity:sym}, \ref{prop:dimension:sym} and \ref{prop:closure:sym}, for all recursive steps of {\sf LowRankSym};
\item
$A$ satisfies Property $\sfP_2$.
\end{itemize}

We can now state the correctness theorem for {\sf SolveLMI}.

\begin{theorem}[Correctness of {\sf SolveLMI}] \label{theo:correctness:sym}
Suppose that $\sfH$ holds. Let $\spec = \{x \in \RR^n \mymid A(x) \succeq 0\}$ be the spectrahedron
associated to $A$. Then two alternatives hold:
\begin{enumerate}
\item
  $\spec = \emptyset$:
  hence the output of {\sf SolveLMI} with input $A$ is the empty list;
\item
  $\spec \neq \emptyset$:
  hence the output of {\sf SolveLMI} with input $A$ is either a vector $x^*$ such
  that $A(x^*)=0$, if it exists; or a rational parametrization
  $q = (q_0,\ldots, q_{n+1}) \subset \ZZ[t]$ such that there exists $t^* \in \zeroset{q_{n+1}}\cap\RR$
  with:
  \begin{itemize}
  \item $A(q_1(t^*)/q_0(t^*),\ldots,q_n(t^*)/q_0(t^*)) \succeq 0$ and
  \item $\rank A(q_1(t^*)/q_0(t^*),\ldots,q_n(t^*)/q_0(t^*)) = \minrank$ ({\it cf.} Notation \ref{minrank}).
  \end{itemize}
\end{enumerate}
\end{theorem}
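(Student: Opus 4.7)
The plan is to reduce the theorem to a correctness statement for the subroutine {\sf LowRankSym}, and then to combine that with Theorem \ref{theo:spectra}. First I would handle the trivial branch: if {\sf SolveLinear}$(A)$ returns a vector $x^*$ with $A(x^*) = 0$, then $0 \succeq 0$ gives $x^* \in \spec$, and step 1 of {\sf SolveLMI} outputs correctly. Otherwise, I assume $A(x)=0$ has no solution and analyze the loop over $r$. Throughout, the correctness of {\sf CheckLMI} is taken as routine: given $q$, we evaluate signs of the coefficients of the characteristic polynomial of $A(x)$ at each real root of $q_{n+1}$ (encoded exactly), which decides $A(x) \succeq 0$ at finitely many candidate points.

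\textbf{Empty case.} If $\spec = \emptyset$, then for every $r \in \{1,\ldots,m-1\}$, the parametrization $q$ returned by {\sf LowRankSym}$(A,r)$ (if not $[\,]$) encodes a finite subset of $\calD_r \cap \RR^n$, and no point of this subset lies in $\spec$. Hence {\sf CheckLMI}$(A,q)$ returns {\tt false} at every $r$, and {\sf SolveLMI} correctly returns the empty list at step 3.

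\textbf{Non-empty case.} Suppose $\spec \neq \emptyset$ and $A(x)=0$ is infeasible. Hypothesis $\sfH$ implies $\sfP_2$, so Theorem \ref{theo:spectra} produces a connected component $\cc$ of $\calD_\minrank \cap \RR^n$ with $\cc \subset \spec$ and $\cc \cap \calD_{\minrank-1} = \emptyset$. For $r < \minrank$, by definition of $\minrank$ one has $\calD_r \cap \RR^n \cap \spec = \emptyset$, so any point encoded by the output of {\sf LowRankSym}$(A,r)$ is rejected by {\sf CheckLMI}. It is therefore enough to show that at $r = \minrank$ the call {\sf LowRankSym}$(A, \minrank)$ returns a rational parametrization encoding at least one point of $\cc$. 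Since $\cc \subset (\calD_\minrank \setminus \calD_{\minrank-1}) \cap \RR^n$, this reduces to the following statement that I would establish for arbitrary $r$: the output of {\sf LowRankSymRec}$(A,r)$ contains at least one point per connected component of $(\calD_r \setminus \calD_{r-1}) \cap \RR^n$. Accepting this claim, the parametrization encoding a point of $\cc$ passes {\sf CheckLMI}, and the returned point has rank exactly $\minrank$ because it lies in $\cc \cap \spec$; this matches the second alternative of the theorem.

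\textbf{Correctness of {\sf LowRankSym} (main obstacle).} I proceed by induction on $n$. The base case is covered by step 1 of {\sf LowRankSym}, which rejects dimension regimes in which $\calD_r$ would be empty generically (and the check on $\dim \calD_r$ uses $\sfP_2$). For the inductive step, fix a connected component $\cc'$ of $(\calD_r \setminus \calD_{r-1}) \cap \RR^n$. By Proposition \ref{prop:regularity:sym}, for a generic $M \in \GL_n(\QQ)$ the input $(A \circ M)$ still satisfies $\sfP_1$, so each incidence variety $\calV_r(A \circ M, \iota)$ is smooth and equidimensional; by Proposition \ref{prop:dimension:sym}, the Lagrange system $\ell(A \circ M, \iota)$ cuts out a set whose projection on the $x$-space is finite and covers, for each $\iota$, the critical values of $\Pi_1 \colon (x,y) \mapsto x_1$ restricted to the full-rank stratum of $\calV_r(A \circ M, \iota)$. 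Now Proposition \ref{prop:closure:sym} applies to $M^{-1}\cc'$: either $\pi_1(M^{-1}\cc') \subsetneq \RR$, in which case a boundary value $t_0$ of $\pi_1(M^{-1}\cc')$ has a finite nonempty fiber in $M^{-1}\cc'$ that lifts to a critical point $(x,y)$ of $\Pi_1$ on some $\calV_r(A \circ M, \iota)$ with $\Pi_1(x,y) = t_0$; this point is captured by step 2 of {\sf LowRankSymRec}. Or $\pi_1(M^{-1}\cc') = \RR$, and then for the generic $t \in \QQ$ chosen at step 3, the affine slice $\cc' \cap \{x_1 = t\}$ (after pulling back by $M$) is a nonempty union of connected components of the rank-$r$ locus of the $(n-1)$-variable pencil $(A_0+tA_1, A_2,\ldots,A_n)$, and the inductive hypothesis combined with {\sf Lift} produces a point of $\cc'$ in the output. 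In both cases {\sf Union} incorporates a witness of $\cc'$ into $q$, proving the claim. The delicate part of this step is ensuring that the genericity assumptions on $M$ and $t$ are preserved along the recursion, which is exactly what the second item of Proposition \ref{prop:regularity:sym} guarantees, and that the closure and finite-fiber properties packaged in Proposition \ref{prop:closure:sym} hold uniformly over the components; these points are the real content of the argument, the remainder being bookkeeping.
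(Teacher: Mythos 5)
Your proposal is correct and follows essentially the same route as the paper: dispatch the \textsf{SolveLinear} branch, invoke Theorem \ref{theo:spectra} to reduce to sampling the component $\cc\subset\spec$ of $\calD_\minrank\cap\RR^n$, and prove correctness of \textsf{LowRankSymRec} by induction on $n$ with the same dichotomy between $\pi_1(M^{-1}\cc)=\RR$ (recurse on the fiber) and a boundary point of $\pi_1(M^{-1}\cc)$ (captured as a critical point via Propositions \ref{prop:regularity:sym}--\ref{prop:closure:sym}). One small caution: state the induction claim, as the paper does, for connected components of the algebraic set $\calD_r\cap\RR^n$ that do not meet $\calD_{r-1}$ rather than for components of the locally closed set $(\calD_r\setminus\calD_{r-1})\cap\RR^n$, since the closure and finite-fiber properties of Proposition \ref{prop:closure:sym} are only guaranteed for the former (and that is all the application requires).
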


\begin{proof}
Suppose $A(x)=0$ has a solution. Hence, at Step \ref{sym:step:main:1} of {\sf SolveLMI}, {\sf SolveLinear} with input $A$ returns
a vector $x^*$ such that $A(x^*)=0$. We deduce that $x^* \in \spec \neq \emptyset$ and that the rank of $A$ attains its minimum on
$\spec$ at $x^*$. We deduce that, if $A(x)=0$ has at least one solution, the algorithm returns a correct output.

Suppose now that either $\spec$ is empty, or $\minrank \geq 1$. We claim that {\sf LowRankSym} is correct, in
the following sense: with input $(A,r)$, with $A$ satisfying $\sfP_1$, the output is a rational parametrization whose solutions
meet each connected component $\cc$ of $\calD_r \new{\cap \RR^n}$ such that $\cc \cap \calD_{r-1} = \emptyset$.

We assume for the moment this claim, and consider two possible alternatives:
\begin{enumerate}
\item $\spec = \emptyset$. Consequently, {\sf CheckLMI} outputs
  {\tt false} at each iteration of Step \ref{sym:step:main:2b} in
  {\sf SolveLMI}. Thus the output of {\sf SolveLMI}
  is the empty list, and correctness follows.
\item $\spec \neq \emptyset$. 
  Denote by $\cc$ a connected component of $\calD_{\minrank} \cap \RR^n$ such that $\cc \cap \spec \neq \emptyset$.
  By Theorem \ref{theo:spectra}, we deduce that $\cc \subset \spec$, and that $\cc \cap \calD_{\minrank-1} = \emptyset$.
  Let $q$ be the output of {\sf LowRankSym} at Step \ref{sym:step:main:2a} of {\sf SolveLMI}. \new{By our claim},
  $q$ defines a finite set whose solutions meet $\cc$, hence $\spec$. Consequently, {\sf CheckLMI} returns {\tt true}
  at Step \ref{sym:step:main:2b}, and hence the algorithm stops returning the correct output $q$.
\end{enumerate}
We end the proof by showing that {\sf LowRankSym} is correct. This is straightforwardly implied by the correctness of the recursive
subroutine {\sf LowRankSymRec}, which is proved below by using induction on the number of variables $n$.

For $n < \binom{m-r+1}{2}$, since $\sfH$ holds, then $A$ satisfies $\sfP_r$. Hence $\calD_r$ is empty, and {\sf LowRankSym} returns the
correct answer $[\,\,]$ (the empty list).

Let $n \geq \binom{m-r+1}{2}$, and let $(A,r)$ be the input. The induction hypothesis implies that for any $\tilde{A} \in {\mathbb{S}}_m^{n}(\QQ)$
satisfying $\sfP_1$, then {\sf LowRankSymRec} with input $(\tilde{A},r)$ returns a rational parametrization of a finite set meeting each
connected component $\tilde{\cc} \subset \tilde{\calD}_r \new{\cap \RR^{n-1}}$ such that $\tilde{\cc} \cap \tilde{\calD}_{r-1} =\emptyset$,
with $\tilde{\calD}_r = \{x \in \RR^{n-1} : \rank \tilde{A}(x) \leq r\}$.
Let $\cc \subset \calD_r \new{\cap \RR^n}$ be a connected component with $\cc \cap
\calD_{r-1} = \emptyset$, and let $M$ be the matrix chosen at Step
\ref{sym:step:lowrec:1}. Hence, since $\sfH$ holds, by Proposition
\ref{prop:closure:sym} the set $\pi_1(M^{-1}\cc)$ is closed in $\RR$. There are
two possible scenarios.

{\it First case.} Suppose that $\pi_1(M^{-1}\cc) = \RR$,
let $\fiber \in \QQ$ be the rational number chosen
at Step \ref{sym:step:lowrec:3} of {\sf LowRankSymRec}, and let $\tilde{A} = (A_0+\fiber A_1,
A_2, \ldots, A_n) \in {\mathbb{S}}_m^n(\QQ)$. We deduce that $\pi_1^{-1}(\fiber) \cap M^{-1}\cc
\neq \emptyset$ is the union of some connected components
of the algebraic set $\tilde{\calD}_{r}=\{x \in \RR^{n-1} \mymid \rank \tilde{A}(x) \leq
r\}$ not meeting $\tilde{\calD}_{r-1}$. Also, since $A$ satisfies $\sfP_1$,
so does $A \circ M$; by Proposition \ref{prop:regularity:sym},
then $\tilde{A}$ satisfies $\sfP_1$.
By the induction assumption, {\sf LowRankSymRec} with input $(\tilde{A},r)$
returns at least one point in each connected component $\tilde{\cc} \subset \tilde{\calD}_r \new{\cap \RR^{n-1}}$
not meeting $\tilde{\calD}_{r-1}$, hence one point in $\cc$ by applying
the subroutine {\sf Lift} at Step \ref{sym:step:lowrec:4}. Correctness follows.

{\it Second case.} Otherwise, $\pi_1(M^{-1}\cc) \neq \RR$ and, since it is a closed set,
its boundary is non-empty. Let $t$ belong to the boundary of $\pi_1(M^{-1}\cc)$,
and suppose w.l.o.g. that $\pi_1(M^{-1}\cc) \subset [t, +\infty)$.
Hence $t$ is the minimum of the restriction of the map $\pi_1$ to $M^{-1}\cc$.
By Proposition \ref{prop:closure:sym}, the set $\pi_1^{-1}(\fiber) \cap M^{-1}\cc$ is non-empty and finite,
and for all $x \in \pi_1^{-1}(\fiber) \cap M^{-1}\cc$, $\rank A(M\,x) = r$
(indeed, for $x \in M^{-1}\cc$, then $M\,x \in \cc$ and hence $M\,x \notin \calD_{r-1} \cap \RR^n$).
Fix $x \in \pi_1^{-1}(\fiber) \cap M^{-1}\cc$. By Proposition \ref{prop:closure:sym},
there exists $\iota$ and $y \in \CC^{m(m-r)}$ such that $(x,y) \in \calV_r(A
\circ M, \iota)$. Also, by Proposition \ref{prop:regularity:sym}, the set
$\calV_r(A\circ M, \iota)$ is smooth and equidimensional.
One deduces that $(x,y)$ is a critical point of the restriction of $\Pi_1 \colon (x,y) \to x_1$
to $\calV_r(A \circ M, \iota)$ and that there exists $z$ such that
$(x,y,z) \in \calZ(A \circ M, \iota)$. Hence, at Step
\ref{sym:step:lowrec:2a}, the routine {\sf LowRankSymRec} outputs a rational
parametrization $q_\iota$, among whose solutions the vector $x$ lies.
\end{proof}

\section{Proof of intermediate results}
\label{section4}

\subsection{Proof of Proposition \ref{prop:regularity:sym}} We prove Assertion 1 and 2 separately.

\begin{proof}[of Assertion 1]
Suppose w.l.o.g. that $M=\Id_n$. For $\iota \subset \{1, \ldots, m\}$, with $\sharp\iota=m-r$, let $f_{red} \subset \QQ[x,y]$ be
the system defined in Lemma \ref{lemma:cleaningrelations}. We prove that there exists a non-empty Zariski open set $\zarA_\iota
\subset {\mathbb{S}}_m^{n+1}(\CC)$ such that, if $A \in \zarA_\iota \cap {\mathbb{S}}_m^{n+1}(\QQ)$, $f_{red}$ generates a radical ideal and
$\zeroset{f_{red}}$ is empty or equidimensional, of codimension $\sharp f_{red} = m(m-r)+\binom{m-r+1}{2}$. We deduce that, for
$A \in \zarA_\iota$, $A$ satisfies $\sfP_1$, and we conclude by defining $\zarA = \cap_\iota \zarA_\iota$ (non-empty and Zariski open).

Suppose w.l.o.g. that $\iota = \{1, \ldots, m-r\}$. We consider the map
\[
  \begin{array}{lrcc}
  \varphi : &  \CC^{n+m(m-r)} \times {\mathbb{S}}_m^{n+1}(\CC) & \longrightarrow & \CC^{m(m-r) + \binom{m-r+1}{2}} \\
            &  (x,y,A) & \longmapsto& f_{red}
  \end{array}
\]
and, for a fixed $A \in {\mathbb{S}}_m^{n+1}(\CC)$, its section map
$\varphi_A \colon \CC^{n+m(m-r)} \to \CC^{m(m-r) + \binom{m-r+1}{2}}$
defined by $\varphi_A(x,y) = \varphi(x,y,A)$. Remark that, for any
$A$, $\zeroset{\varphi_A}=\calV_r(A, \iota)$.

Suppose $\varphi^{-1}(0) = \emptyset$:
this implies that, for all $A \in {\mathbb{S}}_m^{n+1}(\CC)$,
$\zeroset{f_{red}} = \calV_r(A, \iota) = \emptyset$,
that is $A$ satisfies $\sfP_1$ for $A \in \zarA_\iota = {\mathbb{S}}_m^{n+1}(\CC)$.

If $\varphi^{-1}(0) \neq \emptyset$, we prove below that $0$
is a regular value of $\varphi$. We conclude that by Thom's Weak
Transversality Theorem \cite[Section 4.2]{SaSc13} there exists
a non-empty and Zariski open set $A_\iota\subset{\mathbb{S}}_m^{n+1}(\CC)$ such that 
if $A \in \zarA_\iota \cap {\mathbb{S}}_m^{n+1}(\QQ)$, 0 is a regular value
of $\varphi_A$. Hence, by applying the Jacobian criterion ({\it cf.}
\cite[Theorem 16.19]{Eisenbud95}) to the polynomial system $f_{red}$,
we deduce that for $A \in \zarA_\iota \cap {\mathbb{S}}_m^{n+1}(\QQ)$, $\calV_r(A,\iota)$
is smooth and equidimensional of codimension $\sharp f_{red}$.

Let $\jac \varphi$ be the Jacobian matrix of $\varphi$: it contains
the derivatives of polynomials in $f_{red}$ with respect to variables $x,y,A$.
We denote by $a_{\ell, i, j}$ the variable encoding the $(i,j)-$th entry of the matrix $A_\ell, \ell=0, \ldots, n$.
We isolate the columns of $\jac \varphi$ corresponding to:
\begin{itemize}
\item the derivatives with respect to variables $\{a_{0,i,j} \mymid i \leq m-r \,\,\, \text{or} \,\,\, j \leq m-r\}$;
\item the derivatives with respect to variables $y_{i,j}$ such that $i \in \iota$.
\end{itemize}
Let $(x,y,A) \in \varphi^{-1}(0)$, and consider the evaluation of $\jac \varphi$ at $(x,y,A)$. The above columns contain the
following non-singular blocks:
\begin{itemize}
\item
the derivatives w.r.t. $\{a_{0,i,j} \mymid i \leq m-r \,\,\, \text{or} \,\,\, j \leq m-r\}$
of the entries of $A(x) Y(y)$ after the substitution $Y_{\iota} \leftarrow \Id_{m-r}$, that is $\Id_{(m-r)(m+r+1)/2}$;
\item
the derivatives w.r.t. $\{y_{i,j} \mymid i \in \iota\}$ of polynomials in $Y_\iota-\Id_{m-r}$, that is $\Id_{(m-r)^2}$.
\end{itemize}
Hence, the above columns define a maximal non-singular
sub-matrix of $\jac \varphi$ at $(x,y,A)$, of size
$m(m-r)+\binom{m-r+1}{2}=\sharp f_{red}$ (indeed, remark that the entries of $Y_\iota-\Id_{m-r}$ do not depend on variables
$a_{0,i,j}$).
Since $(x,y,A) \in \varphi^{-1}(0)$ is arbitrary, we deduce
that $0$ is a regular value of $\varphi$, and we conclude.
\end{proof}

\begin{proof}[of Assertion 2]
Fix $\iota \subset \{1, \ldots, m\}$ with $\sharp\iota = m-r$.
Since $A$ satisfies $\sfP_1$, $\calV_r(A, \iota)$
is either empty or smooth and equidimensional of codimension
$m(m-r)+\binom{m-r+1}{2}$.
Suppose first that $\calV_r = \emptyset$. Hence for all
$\fiber \in \CC$, $\calV_r \cap \{x_1-t=0\} = \emptyset$,
and we conclude by defining $\zarfiber = \CC$.
Otherwise, consider the restriction of the projection map
$\Pi_1 : (x,y) \to x_1$ to $\calV_r(A, \iota)$. By Sard's
Lemma \cite[Section 4.2]{SaSc13}, the set of critical
values of the restriction of $\pi_1$ to $\calV_r(A, \iota)$
is included in a finite subset $\mathcal{H} \subset \CC$.
We deduce that, for $\fiber \in \zarfiber = \CC \setminus \mathcal{H}$,
then $(A_0+\fiber A_1,A_2,\ldots,A_n)$ satisfies $\sfP_1$.
\end{proof}
\begin{corollary} \label{cor:completeInt}
Let $\zarA \subset {\mathbb{S}}_m^{n+1}(\QQ)$ be as in Proposition \ref{prop:regularity:sym},
and let $A \in \zarA$. Then for every $\iota \subset \{1,\ldots,m\}$ with $\sharp\iota=m-r$,
the ideal $\langle f_{red} \rangle = \langle f \rangle$ is radical, and $\calV_r(A,\iota)$
is a complete intersection of codimension $\sharp f_{red}$.
\end{corollary}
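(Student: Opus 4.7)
The plan is to extract this statement essentially as a byproduct of the proof of Proposition \ref{prop:regularity:sym}, combined with Lemma \ref{lemma:cleaningrelations}. First, Lemma \ref{lemma:cleaningrelations} has already established the ideal equality $\langle f_{red} \rangle = \langle f \rangle$, so it suffices to work with the reduced system $f_{red}$. By construction $\sharp f_{red} = m(m-r) + \binom{m-r+1}{2}$.

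Next, I would invoke the conclusion of the proof of Assertion 1 of Proposition \ref{prop:regularity:sym}: for $A \in \zarA \cap {\mathbb{S}}_m^{n+1}(\QQ)$, the value $0$ is regular for the section map $\varphi_A$ whose zero set is exactly $\calV_r(A,\iota)$. Applying the Jacobian criterion (\cite[Theorem 16.19]{Eisenbud95}), this yields that at every point of $\calV_r(A,\iota)$ the Jacobian matrix $\jac f_{red}$ attains its maximal possible rank $\sharp f_{red}$. In particular, $\calV_r(A,\iota)$ is smooth and equidimensional of codimension exactly $\sharp f_{red} = m(m-r) + \binom{m-r+1}{2}$.

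Since $f_{red}$ consists of precisely $\sharp f_{red}$ polynomials and cuts out an algebraic set of codimension equal to its cardinality, $\calV_r(A,\iota)$ is by definition a complete intersection. The radicality of $\langle f_{red} \rangle$ (hence of $\langle f \rangle$) then follows from the standard consequence of the Jacobian criterion: an ideal generated by $c$ polynomials whose zero set has pure codimension $c$ and whose Jacobian has rank $c$ at every point of that zero set is necessarily radical (see e.g. \cite[Theorem 16.19]{Eisenbud95}).

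There is no real obstacle here; the only delicate point is not to confuse the symmetric redundancies already eliminated by Lemma \ref{lemma:cleaningrelations}. Indeed, the full system $f$ of length $m(m-r)+(m-r)^2$ is \emph{not} a regular sequence in general, but after removing the $\binom{m-r}{2}$ redundancies identified in the lemma, the remaining $f_{red}$ exhibits the expected regularity, which is precisely what makes the complete intersection statement meaningful.
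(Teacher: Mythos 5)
Your proof is correct and follows essentially the same route as the paper's: both recall from the proof of Assertion 1 of Proposition \ref{prop:regularity:sym} that $\jac f_{red}$ has rank $\sharp f_{red}$ at every point of $\calV_r(A,\iota)$, apply the Jacobian criterion to obtain radicality and equidimensionality of codimension $\sharp f_{red}$, and conclude the complete intersection property from the fact that the number of generators equals the codimension. Your closing remark on why the unreduced system $f$ would not work is a sensible clarification but adds nothing beyond the paper's argument.
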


\begin{proof}
We recall from the proof of Assertion 1 of Theorem \ref{prop:regularity:sym} that,
for $A \in \zarA$, the rank of the Jacobian matrix of $f_{red}$ is $\sharp f_{red} =
m(m-r)+\binom{m-r+1}{2}$ at every point of $\calV_r(A,\iota)$. By the Jacobian
criterion \cite[Theorem 16.19]{Eisenbud95}, the ideal $\langle f_{red} \rangle$
is radical and the algebraic set $\zeroset{f_{red}} = \calV_r(A,\iota)$ is smooth
and equidimensional of codimension $\sharp f_{red}$. Hence $\ideal{\calV_r(A,\iota)}=\langle f_{red} \rangle$
can be generated by a number of polynomials equal to the codimension of
$\calV_r(A,\iota)$, and we conclude.
\end{proof}

\subsection{Proof of Proposition \ref{prop:dimension:sym}}


\subsubsection*{Local equations of $\calV_r(A,\iota)$}
Suppose \new{$A$ is a (not necessarily symmetric) linear matrix}. \new{Let us give a local description of the algebraic sets
$\calD_r$ and $\calV_r$ ({\it cf.} also \cite[Section 5]{HNS2014})}.
Consider first the locally closed set $\calD_r \setminus \calD_{r-1} = \{x \in \CC^n \mymid \rank A(x) = r\}.$ This is given
by the union of sets $\calD_r \cap \{x \in \CC^n \mymid \det N(x) \neq 0\}$ where $N$ runs over all $r \times r$
sub-matrices of $A(x)$. Fix $\iota \subset \{1, \ldots, m\}$ with $\sharp\iota=m-r$. Let $N$ be the upper left $r \times r$
sub-matrix of $A(x)$, and consider the corresponding block division of $A$:
\begin{equation} \label{blockdivision}
A =
\left(
\begin{array}{cc}
N & Q \\
P^T & R
\end{array}
\right)
\end{equation}
with $P,Q \in \MM_{r,m-r}(\QQ)$ and $R \in \MM_{m-r,m-r}(\QQ)$.
Let $\QQ[x,y]_{\det N}$ be the local ring obtained by localizing $\QQ[x,y]$ at
$\left\langle \det N \right\rangle$. Let $Y^{(1)}$ (resp. $Y^{(2)}$) be the matrix
obtained by isolating the first $r$ (resp. the last $m-r$) rows of $Y(y)$.
Hence, the local equations of $\calV_r$ in $\{(x,y) \mymid \det N(x) \neq 0\}$
are given by:
\begin{equation} \label{local-equations}
Y^{(1)}+N^{-1}QY^{(2)} = 0, \qquad
\Sigma(N) Y^{(2)} = 0, \qquad
Y_\iota - \Id_{m-r} = 0,
\end{equation}
where $\Sigma(N) = R-P^T N^{-1}Q$ is the Schur complement of $N$ in $A$. This follows
from the following straightforward equivalence holding in the local ring $\QQ[x,y]_{\det N}$
\new{({\it cf.} also \cite[Lemma 13]{HNS2014})}:
\begin{equation*}
A(x)Y(y)=0
\qquad 
\text{iff} \qquad
\left(\begin{array}{cc} \Id_r & 0 \\ -P^T & \Id_{m-r} \end{array} \right)
\left( \begin{array}{cc} N^{-1} & 0 \\ 0 & \Id_{m-r} \end{array} \right)
\left( \begin{array}{cc} N & Q \\ P^T & R \end{array} \right) Y(y) = 0.
\end{equation*}

\subsubsection*{Intermediate lemma}
Let $w \in \CC^n$ be a non-zero vector, and consider the projection map induced by $w$:
$
\Pi_w \colon (x_1, \ldots, x_n, y) \mapsto w_1x_1+\cdots+w_nx_n.
$

For $A \in \zarA$ (given by Proposition \ref{prop:regularity:sym}),
for all $\iota$ as above, the critical points of the restriction
of $\Pi_w$ to $\calV_r(A, \iota)$ are encoded by the
polynomial system $(f,g,h)$ where
\begin{equation}
\label{lag:syst:global}
f=f(A,\iota), \qquad \qquad
(g,h) =
z^T
\left(
\begin{array}{c}
D f \\
D \Pi_w
\end{array}
\right)
=
z^T
\left(
\begin{array}{cc}
D_x f & D_y f \\
w^T & 0
\end{array}
\right),
\end{equation}
and $z = (z_1, \ldots, z_{c+e}, 1)$ is a vector of
Lagrange multipliers. Indeed, equations induced by
$(g,h)$ imply that the vector $w$ is normal to the
tangent space of $\calV_r$ at $(x,y)$.

We prove an intermediate lemma towards Proposition \ref{prop:dimension:sym}.
\begin{lemma}
\label{lemma:intermediate:sym}
Let $A \in {\mathbb{S}}_m^{n+1}(\QQ)$ satisfy $\sfP_1$.
Then there exists a non-empty Zariski open set $\mathscr{W} \subset
\CC^n$ such that, if $w \in \mathscr{W} \cap \QQ^n$, for all $\iota \subset \{1,\ldots,m\}$,
with $\sharp\iota=m-r$, the following holds:
\begin{enumerate}
\item the system $(f,g,h)$ in \eqref{lag:syst:global} satisfies $\sfQ$
  in $\{(x,y,z) \mymid \rank A(x) = r\}$;
\item the projection of $\zeroset{f,g,h} \cap \{(x,y,z) \mymid \rank
  A(x) = r\}$ on the $x-$space is empty or finite;
\item the projection of $\zeroset{f,g,h} \cap \{(x,y,z) \mymid \rank
  A(x) = r\}$ on $(x,y)$ contains the set of critical points of
  the restriction of $\Pi_w$ to $\calV_r \cap \{(x,y) \mymid \rank A(x) = r\}$.
\end{enumerate}
\end{lemma}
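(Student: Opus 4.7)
The plan is to handle Assertion 3 directly — it holds for every $w$, not only for generic ones — and then to derive Assertions 1 and 2 in tandem via Thom's weak transversality theorem applied to an auxiliary map in which $w$ appears as the free parameter.

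For Assertion 3, fix $\iota$ with $\sharp\iota = m-r$ and a critical point $(x,y)$ of the restriction of $\Pi_w$ to $\calV_r(A,\iota) \cap \{\rank A(x) = r\}$. Since $A$ satisfies $\sfP_1$, Corollary \ref{cor:completeInt} gives that $\langle f \rangle = \langle f_{red} \rangle$ is radical and that the rank of $Df$ is exactly $c = \sharp f_{red}$ at every point of $\calV_r$. The critical point condition is equivalent to $(w^T,0)$ lying in the row span of $Df$; choosing $z \in \CC^{c+e}$ with $z^T Df = -(w^T,0)$ then exhibits $(x,y,z) \in \zeroset{f,g,h}$, which proves (3).

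To establish Assertions 1 and 2 I would introduce the auxiliary map
\[
\Phi : (x,y,z,w) \in \CC^n \times \CC^{m(m-r)} \times \CC^{c+e} \times \CC^n \longmapsto (f(x,y), g(x,y,z,w), h(x,y,z))
\]
restricted to the Zariski-open locus $\zarU = \{(x,y,z,w) \mymid \rank A(x) = r\}$, and show that $0$ is a regular value of $\Phi|_\zarU$. At a point of $\Phi^{-1}(0) \cap \zarU$, its Jacobian splits into blocks with the following contributions: the $f$-rows have $D_{x,y}f$ of rank $c$ (smoothness of $\calV_r$) and vanish in $(z,w)$; in the $(g,h)$-rows, $\partial(g,h)/\partial z = (Df)^T$ has rank $c$, while $\partial g/\partial w = \Id_n$ since $g = z^T D_x f + w^T$, and $\partial h/\partial w = 0$. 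Assembling these blocks, $D\Phi|_\zarU$ attains full row rank $c + n + m(m-r)$ modulo the $e$ intrinsic redundancies exposed by Lemma \ref{lemma:cleaningrelations}. Thom's weak transversality theorem then yields a non-empty Zariski-open $\mathscr{W}_\iota \subset \CC^n$ such that for $w \in \mathscr{W}_\iota \cap \QQ^n$ the section map $\Phi_w$ has $0$ as a regular value on $\zarU$, which is Property $\sfQ$ for $(f,g,h)$ in $\{\rank A = r\}$, establishing Assertion 1. For Assertion 2, a dimension count shows that $\Phi_w^{-1}(0) \cap \zarU$ has pure dimension $e$ (the null-space dimension of $z \mapsto z^T Df$ on $\calV_r$) and projects on $(x,y)$ to the critical locus of $\Pi_w$ restricted to $\calV_r \cap \{\rank A = r\}$; that locus is finite by Sard's lemma applied to $\Pi_w|_{\calV_r}$, so the projection to the $x$-space is finite as well. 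Taking $\mathscr{W} = \bigcap_{\sharp\iota = m-r} \mathscr{W}_\iota$ provides the uniform statement.

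The main obstacle is precisely the Jacobian rank calculation: on $\calV_r$, the $e = \binom{m-r}{2}$ redundancies of $f$ cause $Df$ to drop from its naive rank $c+e$ to $c$, so one has to verify that the freedom provided by $w$ through $\partial g/\partial w = \Id_n$ combines correctly with the $(x,y)$- and $z$-derivatives to restore the full rank $c + n + m(m-r)$ of $D\Phi|_\zarU$. I would carry this out locally in a chart $\{\det N(x) \neq 0\}$ where $N$ is an $r \times r$ submatrix of $A(x)$, using the Schur-complement description \eqref{local-equations}: in that chart the defining equations of $\calV_r$ are independent, $Df$ acquires a manifest rank-$c$ minor, and extracting a maximal non-singular minor of $D\Phi|_\zarU$ reduces to an elementary column-counting argument exploiting the block $\partial g/\partial w = \Id_n$.
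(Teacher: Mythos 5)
Your overall architecture is the same as the paper's: Assertion 3 by direct linear algebra (the critical-point condition puts $(w^T,0)$ in the row span of $\jac f$, which has rank exactly $c$ on $\calV_r$ by $\sfP_1$, so a multiplier vector $z$ exists), and Assertion 1 by applying Thom's weak transversality theorem to the map in which $w$ is the free parameter, with the block $\partial g/\partial w=\Id_n$ supplying the surjectivity in the new directions. Your deferral of the rank computation to the chart $\{\det N\neq 0\}$ via the Schur-complement equations \eqref{local-equations} is exactly what the paper does; just be aware that the deferred computation has to deliver two things you currently only assert: (i) that the $h$-rows alone contribute rank $m(m-r)$ through their $z$-columns, i.e.\ that $D_y f$ has full column rank on the rank-$r$ locus (in the chart this is visible as an invertible triangular block), and (ii) a precise meaning for ``full row rank modulo the $e$ redundancies'', since Thom's theorem needs $0$ to be a regular value of an honest map, so the $e$ dependent equations must actually be discarded (locally, the repeated entries of the symmetric Schur complement) before transversality is invoked.

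There is, however, a genuine gap in your Assertion 2. You claim that the critical locus of $\Pi_w$ on $\calV_r\cap\{\rank A=r\}$ ``is finite by Sard's lemma applied to $\Pi_w|_{\calV_r}$''. Sard's lemma (algebraic or otherwise) controls critical \emph{values}, not critical \emph{points}: it says $\Pi_w(\crit(\Pi_w,\calV_r))$ is finite, which is perfectly compatible with $\crit(\Pi_w,\calV_r)$ being a positive-dimensional subvariety contained in finitely many fibers of $\Pi_w$. So this step fails, and with it your deduction of finiteness of the $x$-projection. The correct route --- for which you have already assembled every ingredient --- is the theorem on the dimension of fibers: by Assertion 1 the set $\mathcal{E}=\zeroset{f,g,h}\cap\{\rank A=r\}$ is purely $e$-equidimensional; over a point $x^*$ with $\rank A(x^*)=r$ the $y$-coordinate is uniquely determined, and the $z$-fiber is the affine subspace $\{z \mymid z^T\jac f=-(w^T,0)\}$, of dimension exactly $e$ because $\rank\jac f=c$ and $z$ has $c+e$ coordinates; hence every nonempty fiber of $\pi_x|_{\mathcal{E}}$ has dimension $e$ and the image $\pi_x(\mathcal{E})$ has dimension $e-e=0$. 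Replace the appeal to Sard by this fiber-dimension count and Assertion 2 is complete.
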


\begin{proof}[of Assertion 1]
The strategy relies on applying Thom Weak Transversality Theorem and
the Jacobian criterion, as in the proof of Proposition \ref{prop:regularity:sym}.
\new{The similar passages will be only sketched.}

We claim (and prove below) that, given a $r \times r$ sub-matrix $N$ of $A$,
there exists a non-empty Zariski open set $\mathscr{W}_N \subset \CC^n$ such that,
for $w \in \mathscr{W}_N$, $(f,g,h)$ satisfies $\sfQ$ in $\{(x,y,z) \mymid \det N \neq 0\}$.
We deduce Assertion 1 by defining $\mathscr{W}=\bigcap_N \mathscr{W}_N$, where $N$
runs over all $r \times r$ sub-matrices of $A(x)$.

Let $U_\iota \in \CC^{(m-r) \times m}$ be the boolean matrix such that
$U_\iota Y(y) = Y_\iota$, and let $U_\iota = (U^{(1)}_\iota \mid U^{(2)}_\iota)$
be the subdivision with $U^{(1)}_\iota \in \CC^{(m-r) \times r}$ and
$U^{(2)}_\iota \in \CC^{(m-r) \times (m-r)}$.
The third equation in \eqref{local-equations} equals $U_\iota Y(y) - \Id_{m-r} = 0.$
From \eqref{local-equations} we deduce the equality
\[
\Id_{m-r} = U^{(1)}_\iota Y^{(1)} + U^{(2)}_\iota Y^{(2)} = (U^{(2)}_\iota-U^{(1)}_\iota N^{-1}{Q})Y^{(2)}
\]
and hence that both $Y^{(2)}$ and $U^{(2)}_\iota-U^{(1)}_\iota N^{-1}P$
are non-singular in $\QQ[x,y]_{\det N}$. We deduce that the above local equations of
$\calV_r$ are equivalent to
\[
Y^{(1)}+N^{-1}QY^{(2)} = 0, \qquad
\Sigma(N) = 0, \qquad
Y^{(2)}-(U^{(2)}_\iota-U^{(1)}_\iota N^{-1}P)^{-1} = 0
\]
in $\QQ[x,y]_{\det N}$. We collect the above equations in a system $\tilde{f}$, of length $c+e$.
Hence, the Jacobian matrix of $\tilde{f}$ is
\[
\jac \tilde{f} =
\left(
\begin{array}{cc}
D_x[\Sigma(N)]_{i,j} & 0_{(m-r)^2 \times m(m-r)} \\
\star & 
\begin{array}{cc}
\Id_{r(m-r)} & \star \\
0 & \Id_{(m-r)^2}
\end{array}
\end{array}
\right).
\]
Since $A$ satisfies $\sfP_1$, the rank of $\jac \tilde{f}$ is constant
and equal to $c$ if evaluated at $(x,y) \in \zeroset{\tilde{f}}
= \calV_r(A, \iota) \cap \{(x,y) \mymid \det N \neq 0\}$.
{Similarly to \eqref{lag:syst:global},} we define
\[
(\tilde{g},\tilde{h}) = z^T
\left(
\begin{array}{cc}
\jac \tilde{f} \\
w^T \,\,\,\,\,  0
\end{array}
\right)
\]
with $z = (z_1, \ldots, z_{c+e}, 1)$.
{The polynomial relations $\tilde{h}_i=0$ implies that $z_{(m-r)^2+i}=0$, for $i=1, \ldots, m(m-r)$,}
and hence they can be eliminated, together with the variables $z_{(m-r)^2+i}, i=1, \ldots, m(m-r)$.
Hence, one can consider the equivalent equations $(\tilde{f}, \tilde{g}, \tilde{h})$ where the last
$m(m-r)$ variables $z$ do not appear in $\tilde{g}$.

Let us define the map $\varphi \colon \CC^{n+c+e+m(m-r)} \times \CC^{n} \to \CC^{n+c+e+m(m-r)}$,
$\varphi(x,y,z,w) = (\tilde{f}, \tilde{g}, \tilde{h})$, and 
for $w \in \CC^n$, its section map $\varphi_w \colon (x,y,z) \mapsto {\varphi}(x,y,z,w)$.

If $\varphi^{-1}(0) = \emptyset$, we define $\mathscr{W}_N = \CC^n$ and conclude.
Otherwise, let $(x,y,z,w) \in \varphi^{-1}(0)$. Remark that polynomials in $\tilde{f}$
just depend on $(x,y)$, hence their contribution in $\jac \varphi (x,y,z,w)$ is
the block $\jac \tilde{f}$, whose rank is $c$, since $(x,y) \in \calV_r$. Hence,
we deduce that the row-rank of $\jac \varphi$ at $(x,y,z,w)$ is at most $n+c+m(m-r)$.
Further, by isolating the columns corresponding to the derivatives with respect to $x,y$,
to $w_1, \ldots, w_n$, and to $z_{(m-r)^2+i}, i=1, \ldots, m(m-r)$, one obtains a $(n+c+e+m(m-r))
\times (2n+2m(m-r))$ sub-matrix of $\jac \varphi$ of rank $n+c+m(m-r)$.
\end{proof}

\begin{proof}[of Assertion 2]
From Assertion 1 we deduce that the locally closed set $\mathcal{E}=\zeroset{f,g,h} \cap \{(x,y,z)
\mymid \rank A(x) = r\}$ is empty or $e-$equidimensional. If it is empty, we are done. Suppose that
it is $e-$equidimensional. Consider the projection map
$\pi_x(x,y,z)=x$, and its restriction to $\mathcal{E}$. Let $x^* \in \pi_x(\mathcal{E})$.
Then $\rank A(x^*) = r$ and there exists a unique $y \in \CC^{m(m-r)}$
such that $f(x^*,y) = 0$. Hence the fiber $\pi_x^{-1}(x^*)$ is isomorphic
to the linear space
$
\{(z_1, \ldots, z_{c+e}) \mymid (z_1, \ldots, z_{c+e}) \jac f = (w^T, 0)\}.
$
Since the rank of $\jac f$ is $c$, one deduces that
$\pi_x^{-1}(x^*)$ is a linear space of dimension $e$, and by the Theorem
on the Dimension of Fibers \cite[Sect. 6.3, Theorem 7]{Shafarevich77}
we deduce that $\pi_x(\mathcal{E})$ has dimension $0$.
\end{proof}

\begin{proof}[of Assertion 3]
Since $\calV_r \cap \{(x,y) \mymid \rank A(x) = r\}$ is smooth and equidimensional, by
\cite[Lemma 3.2.1]{SaSc13}, for $w \neq 0$, the set $\crit(\Pi_w, \calV_r)$ coincides with
the set of points $(x,y) \in \calV_r$ such that the matrix
\[
\jac (f, \Pi_w) =
\left(
\begin{array}{c}
\jac f \\
\jac \Pi_w
\end{array}
\right)
\]
has a rank $\leq c$. In particular there exists $z=(z_1, \ldots, z_{c+e}, z_{c+e+1}) \neq 0$,
such that $z^T \jac (f, \Pi_w) = 0$. One can exclude that
$z_{c+e+1} = 0$, since this implies that $\jac f$ has a non-zero vector
in the left kernel, which contradicts the fact that $A$ satisfies $\sfP_1$.
Hence without loss of generality we deduce that $z_{c+e+1} = 1$, and we conclude.
\end{proof}


\subsubsection*{Proof of the proposition}
We can finally deduce the proof of Proposition \ref{prop:dimension:sym}.

\begin{proof}[of Proposition \ref{prop:dimension:sym}]
Define $\zarM_1$ as the set of matrices $M \in \GL_n(\CC)$ such that the first row of $M^{-1}$ is contained in $\mathscr{W}$
(defined in Lemma \ref{lemma:intermediate:sym}). The proof of all assertions follows from Lemma \ref{lemma:intermediate:sym}
since, for $M \in \zarM_1$, one gets
\begin{equation}
\label{equality}
\left(
\begin{array}{c}
  \jac f(A \circ M, \iota) \\
  e^T_1 \quad 0 \; \cdots \; 0 \\
\end{array}
\right)
=
\left(
\begin{array}{cc}
  \jac f(A, \iota) \circ M \\
  w^T \quad 0 \; \cdots \; 0 \\
\end{array}
\right)
\left(
\begin{array}{cc}
M & 0 \\
0 & \Id_{m(m-r)}
\end{array}
\right),
\end{equation}
where $w^T$ is the first row of $M^{-1}$. Indeed, for
$z=(z_1, \ldots, z_{c+e})$, we deduce from the
previous relation that the set of solutions to the equations
\begin{equation} \label{firstsystem}
f(A, \iota) = 0, \qquad
z^T \jac f(A, \iota) = (w^T, 0)
\end{equation}
is the image of the set of solutions of
\begin{equation} \label{secondsystem}
f(A \circ M, \iota) = 0, \qquad
z^T \jac f(A \circ M, \iota) = (e_1^T, 0)
\end{equation}
by the linear map
\[
\left(
\begin{array}{c}
x \\
y \\
z
\end{array}
\right)
\mapsto
\left(
\begin{array}{ccc}
M^{-1} & 0 & 0 \\
0 & \Id_{m(m-r)} & 0 \\
0 & 0 & \Id_{c+e}
\end{array}
\right)
\left(
\begin{array}{c}
x \\
y \\
z
\end{array}
\right).
\]
This last fact is straightforward since from \eqref{equality}
we deduce that system \eqref{secondsystem} is equivalent to
$
f(A \circ M, \iota) = 0,
z^T \left( \jac f(A, \iota) \circ M \right) = (w^T, 0).
$
Hence the three assertions of Proposition \ref{prop:dimension:sym}
are straightforwardly deduced by those of Lemma \ref{lemma:intermediate:sym}.
\end{proof}

\subsection{Proof of Proposition \ref{prop:closure:sym}}
For the proof of Assertion 1 of Proposition \ref{prop:closure:sym}, we need to recall some notation
from \cite[Sec.\,5]{HNS2014}. Let $\calZ \subset \CC^n$ be an algebraic set of dimension $d$. Its
equidimensional component of dimension $p$, for $0 \leq p \leq d$, is $\Omega_p(\calZ)$. We define
$
\zarS(\calZ) = \Omega_0(\calZ) \cup \cdots \cup \Omega_{d-1}(\calZ) \cup \sing{\Omega_d{\calZ}},
$
where we recall that $\sing{\calV}$ denotes the singular locus of an algebraic set $\calV$, and
\[
\zarC(\pi_i,\calZ) = \Omega_0(\calZ) \cup \cdots \cup \Omega_{i-1}(\calZ) \cup \bigcup_{r = i}^d \crit(\pi_i,\reg{\Omega_r{\calZ}}),
\]
Here $\reg{\calV}$ denotes $\calV \setminus \sing{\calV}$, $\pi_i \colon x \mapsto (x_1, \ldots, x_i)$
and $\crit(g,\calV)$ the set of critical points of the restriction of a map $g$ to $\calV$. For $M \in
\GL_n(\CC)$ we recursively define
\begin{itemize}
\item ${\mathcal O}_d(M^{-1}\setZ)=M^{-1}\setZ$;
\item ${\mathcal O}_i(M^{-1}\setZ)=\scS({\mathcal O}_{i+1}(M^{-1}\setZ))
\cup \scC(\pi_{i+1},  {\mathcal O}_{i+1}(M^{-1}\setZ)) \cup
\scC(\pi_{i+1},M^{-1}\setZ)$ for $i=0, \ldots, d-1$.
\end{itemize}
In \cite[Prop.\,17]{HNS2014} we proved that
when $M$ is generic in $\GL_n(\CC)$ (that is, it lies out of a proper algebraic set) the algebraic sets
$\calO_i(M^{-1}\calZ)$ have dimension at most $i$ and are in Noether position with respect to $x_1,\ldots,x_i$
({\it cf.} \cite{Shafarevich77,Eisenbud95} for a background in Noether position). We used this
fact in \cite[Prop.\,18]{HNS2014} to prove closure properties of the restriction of maps $\pi_i, i=1, \ldots, d$,
to the connected components of $\calZ \cap \RR^n$.

\begin{proof}[of Assertion 1]
We denote by $\zarM_2 \subset \GL_n(\CC)$ the non-empty Zariski open set defined in
\cite[Prop.\,17]{HNS2014}, for the algebraic set $\calD_r$. Hence, for $M \in \zarM_2$,
we deduce by \cite[Prop.\,18]{HNS2014} that for $i=1, \ldots, d$, and for any connected
component $\cc \subset \calD_r \cap \RR^n$, the boundary of $\pi_i(M^{-1} \cc)$ is contained in
$\pi_i(\mathcal{O}_{i-1}(M^{-1} \calD_r) \cap M^{-1} \cc) \subset \pi_i(M^{-1}\cc)$, and hence
that $\pi_i(M^{-1}\cc)$ is closed.
Moreover, let $\cc \subset \calD_r \cap \RR^n$ be a connected component and let
$t \in \RR$ be in the boundary of $\pi_1(M^{-1}\cc)$. Then \cite[Lemma\,19]{HNS2014} implies
that $\pi_1^{-1}(t) \cap M^{-1}\cc$ is finite.
\end{proof}

\begin{proof}[of Assertion 2]
Let $M \in \zarM_2$. Consider the open set 
\[
\zarO = \{(x,y) \in \CC^{n+m(m-r)} \mymid \rank A(M\,x) = r, \, \rank Y(y) = m-r\}.
\]
Let $\Pi_x \colon \CC^{n+m(m-r)} \to \CC^n$, $\Pi_x(x,y)=x$. Then $\Pi_x(\zarO)$ is the locally closed set 
$M^{-1}(\calD_r \setminus \calD_{r-1})=\{x \in \CC^n \mymid \rank A(M\,x) = r\}.$ We consider the restriction
of polynomial equations $A(M\,x) Y(y) = 0$ to $\zarO$. By definition of $\zarO$,
we can split the locally closed set $\zarO \cap \zeroset{A(M\,x) Y(y)}$ into the
union
\[
\zarO \cap \zeroset{A(M\,x) Y(y)} = \bigcup_{{\begin{array}{c} \iota \subset \{1, \ldots, m\} \\ \sharp\iota=m-r \end{array}}}
\Big( \zarO_{\iota} \cap \zeroset{A(M\,x) Y(y)} \Big),
\]
where $\zarO_\iota = \{(x,y) \mymid \det Y_\iota \neq 0\}$.

Let $\cc$ be a connected component of $\calD_r \cap \RR^n$.
Let $\fiber$ lie in the frontier of $\pi_1(M^{-1}\cc)$, and $x \in \pi_1^{-1}(t) \cap M^{-1}\cc$
with $\rank A(M\,x) = r$. Hence there exists $\iota \subset \{1, \ldots, m\}$
such that $x$ lies in the projection of $\calV_r(A \circ M, \iota)$ on the $x-$space.
Hence there exists $y$ such that $(x,y) \in \calV_r(A \circ M,\iota)$ and  such that $\pi_1(x,y) = t$. 
%
\end{proof}

\section{Complexity analysis}
\label{sec4}
Our next goal is to estimate the \new{arithmetic} complexity of algorithm {\sf SolveLMI}, \new{that
is} the number of arithmetic operations performed over $\QQ$. This will essentially rely on the
complexities of state-of-the-art algorithms computing rational parametrizations.

\subsection{Output degree estimates}
\label{sec4:ssec1}
We start by computing Multilinear B\'ezout bounds ({\it cf.} \cite[Ch.\,11]{SaSc13}) on the output
degree.
\begin{proposition} \label{sym:prop:degree}
Let $A \in {\mathbb{S}}_m^{n+1}$ be the input of {\sf SolveLMI}, and $0 \leq r \leq m-1$. Let $p_r=(m-r)(m+r+1)/2$.
If $\sfH$ holds, for all $\iota \subset \{1, \ldots,m\}$, with $\sharp\iota=m-r$, the degree of the
parametrization $q_\iota$ computed at Step \ref{sym:step:lowrec:2a} of {\sf LowRankSymRec}
is at most
\[
\theta(m,n,r) = \sum_{k \in \calG_{m,n,r}}\binom{p_r}{n-k}\binom{n-1}{k+p_r-1-r(m-r)}\binom{r(m-r)}{k},
\]
with $\calG_{m,n,r} = \{k \mymid \max\{0,n-p_r\} \leq k \leq \min\{n-\binom{m-r+1}{2}, r(m-r)\}\}$.
Moreover, for all $m,n,r$, $\theta(m,n,r)$ is bounded above by $\binom{p_r+n}{n}^3$.
\end{proposition}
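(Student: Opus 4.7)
By Proposition~\ref{prop:dimension:sym}, the projection of $\calZ(A \circ M, \iota) \cap \{(x,y,z) : \rank A(Mx) = r\}$ onto the $x$-space is finite, so the degree of $q_\iota$ is bounded by the cardinality (with multiplicities) of this zero-dimensional set in $(x,y,z)$-space. The plan is therefore to estimate the latter by a multilinear B\'ezout bound.

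The first step is to work with the reduced defining system provided by Lemma~\ref{lemma:cleaningrelations}. After substituting $Y_\iota = \Id_{m-r}$, the incidence variety $\calV_r(A \circ M, \iota)$ is cut out by exactly $p_r = r(m-r) + \binom{m-r+1}{2}$ bilinear polynomials $f_{red}(x,y')$ in the $n + r(m-r)$ unknowns $X = (x_1,\ldots,x_n)$ and $Y' = \{y_{i,j} : i \notin \iota\}$. Replacing $f$ by $f_{red}$ in the Lagrange construction does not change the critical locus (the ideals coincide by Corollary~\ref{cor:completeInt}), so I may assume $\ell$ consists of the $p_r$ bilinear equations $f_{red}=0$ together with $n + r(m-r)$ Lagrange equations $z^T D_{(x,y')} f_{red} = (e_1^T, 0)$, all of which are homogeneous of degree $1$ in the $p_r$ multipliers $Z = (z_1,\ldots,z_{p_r})$ except for the single affine normalization $z^T \partial_{x_1} f_{red} = 1$.

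The key step is to view $Z$ projectively in $\PP^{p_r-1}$. The inhomogeneous equation $z^T \partial_{x_1} f_{red} = 1$ then simply fixes an affine chart of $\PP^{p_r-1}$ and can be dropped from the equation count, reducing the problem to a square system in $\CC^n \times \CC^{r(m-r)} \times \PP^{p_r-1}$ of total dimension $n + r(m-r) + p_r - 1$. The three blocks of equations have respective multidegrees $(1,1,0)$, $(0,1,1)$, $(1,0,1)$ in the groups $(X, Y', Z)$, with multiplicities $p_r$, $n-1$, $r(m-r)$. The multi-projective B\'ezout bound of \cite[Ch.~11]{SaSc13} majorizes the number of solutions by the coefficient of $t_X^n\, t_{Y'}^{r(m-r)}\, t_Z^{p_r-1}$ in
\[
(t_X + t_{Y'})^{p_r}\, (t_{Y'} + t_Z)^{n-1}\, (t_X + t_Z)^{r(m-r)}.
\]
Expanding each factor via the binomial theorem and matching the three target exponents yields a one-parameter family of contributions indexed by an integer $k$ (for example the $t_X$-exponent coming from the third factor); re-symmetrizing the middle factor through $\binom{n-1}{j} = \binom{n-1}{n-1-j}$ produces precisely the sum $\theta(m,n,r)$, with $\calG_{m,n,r}$ being the set of $k$ for which all three binomial arguments are non-negative.

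For the crude upper bound $\theta(m,n,r) \leq \binom{p_r+n}{n}^3$, I would apply the elementary inequality $\binom{a}{b} \leq \binom{a+b}{b}$ to each of the three factors, dominating each one by $\binom{p_r+n}{n}$, and use that $\sharp\calG_{m,n,r} \leq \binom{p_r+n}{n}$ as well. The most delicate part of the argument is the projective treatment of the Lagrange multipliers, which is precisely what produces the exponent $n-1$ (rather than $n$) in the middle factor; the naive affine B\'ezout bound would be strictly larger and would no longer match the conjectural algebraic degree of generic semidefinite programs reported in \cite[Table~2]{stu}.
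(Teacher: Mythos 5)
Your derivation of the closed formula for $\theta(m,n,r)$ is correct and follows essentially the same route as the paper: reduce to the $p_r$ bilinear equations of $f_{red}$ in the two groups $x$ and $y'=(y_{i,j})_{i\notin\iota}$, form the Lagrange system with three blocks of multidegrees $(1,1,0)$, $(0,1,1)$, $(1,0,1)$ of respective lengths $p_r$, $n-1$, $r(m-r)$, and read off the multilinear B\'ezout bound as the coefficient of $s_x^{n}s_{y}^{r(m-r)}s_z^{p_r-1}$ in $(s_x+s_y)^{p_r}(s_y+s_z)^{n-1}(s_x+s_z)^{r(m-r)}$. The only difference is bookkeeping of the normalization of the multipliers: you projectivize $z$ and drop the inhomogeneous equation, whereas the paper sets $z_1=1$ and drops the $x_1$-derivative column; both leave $p_r-1$ degrees of freedom in $z$ and $n-1$ equations of type $(0,1,1)$, hence the same coefficient, and your identification of $\calG_{m,n,r}$ as the admissible index range is also right.

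The gap is in the final inequality $\theta(m,n,r)\le\binom{p_r+n}{n}^3$. Bounding each of the three binomial factors by $\binom{p_r+n}{n}$ and then paying an additional factor $\sharp\calG_{m,n,r}\le\binom{p_r+n}{n}$ for the number of summands yields only $\theta(m,n,r)\le\binom{p_r+n}{n}^4$, one power too many; this extra power would propagate to the exponent $6$ in the complexity bound of Theorem~\ref{maintheo2} (which is $\theta^2$), turning it into $8$. The paper avoids paying for the cardinality of $\calG_{m,n,r}$ by cubing Vandermonde's identity $\binom{p_r+n}{n}=\sum_i\binom{n}{i}\binom{p_r}{i}$, so that
\[
\binom{p_r+n}{n}^3=\sum_{i_1,i_2,i_3}\binom{n}{i_1}\binom{p_r}{i_1}\binom{n}{i_2}\binom{p_r}{i_2}\binom{n}{i_3}\binom{p_r}{i_3},
\]
and then dominating the $k$-th term of $\theta(m,n,r)$ by the single term indexed by $(i_1,i_2,i_3)=(n-k,\;k+p_r-1-r(m-r),\;k)$: one has $\binom{p_r}{n-k}\le\binom{n}{i_1}\binom{p_r}{i_1}$, $\binom{n-1}{i_2}\le\binom{n}{i_2}\binom{p_r}{i_2}$ (note $i_2\le p_r-1$ since $k\le r(m-r)$), and $\binom{r(m-r)}{k}\le\binom{n}{i_3}\binom{p_r}{i_3}$ since $r(m-r)\le p_r$ and $k\le n$. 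Distinct values of $k$ give distinct triples, so the whole sum over $k\in\calG_{m,n,r}$ is absorbed into the triple sum and the bound is exactly $\binom{p_r+n}{n}^3$, with no extra cardinality factor.
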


\begin{proof}
We can simplify the system $f=f(A,\iota)$ to a system of $p_r$ bilinear equations with respect to variables
$x=(x_1, \ldots, x_n)$ and $y=(y_{m-r+1,1},\ldots,y_{m,m-r})$. Indeed, by Lemma \ref{lemma:cleaningrelations},
$\calV_r(A,\iota)$ is defined by $Y_\iota-\Id_{m-r}=0$ and by $m(m-r)-e=p_r$ entries of $A(x)Y(y)$, where
$e=\binom{m-r}{2}$. Hence we can eliminate equations $Y_\iota-\Id_{m-r}=0$ and the corresponding variables
$y_{i,j}$. Consequently, the Lagrange system can be also simplified, by admitting only $p_r$ Lagrange
multipliers $z$.
We can also eliminate the first Lagrange multiplier $z_1$ (since $z \neq 0$, one can assume $z_1=1$) and
impose a rank defect on the truncated Jacobian matrix obtained by $\jac f$ by eliminating the first column
(that containing the derivatives with respect to $x_1$).

The bound $\theta(m,n,r)$, by \cite[Ch.\,11]{SaSc13}, equals the coefficient of $s_x^ns_y^{r(m-r)}s_z^{p_r-1}$
in $(s_x+s_y)^{p_r}(s_y+s_z)^{n-1}(s_x+s_z)^{r(m-r)}$. This can be easily obtained by writing down such an expansion
and solving the associated linear system forcing the constraints on the exponents of the monomials.
The result is exactly the claimed closed formula.
The estimate $\theta(m,n,r) \leq \binom{p_r+n}{n}^3$ is obtained by applying the standard formula:
\[
\binom{a+b}{a}^3 = \sum_{i_1,i_2,i_3=0}^{\min(a,b)}\binom{a}{i_1}\binom{b}{i_1}\binom{a}{i_2}\binom{b}{i_2}\binom{a}{i_3}\binom{b}{i_3}
\]
with $a=n$ and $b=p_r$.
\end{proof}

\begin{corollary} \label{corollary:degree:sym}
\new{With the hypotheses and notations of Proposition \ref{sym:prop:degree}},
the sum of the degrees of the rational parametrizations computed during {\sf SolveLMI} is bounded above by
$\sum_{r \leq \minrank} \binom{m}{r} \theta(m,n,r).$ The degree of the rational parametrization whose solutions intersect $\spec$ is in
\[
\bigO\left(\binom{\frac{m^2+m}{2}+n}{n}^3\right).
\]
\end{corollary}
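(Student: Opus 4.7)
My plan is to deduce the corollary directly by combining the per-parametrization bound of Proposition~\ref{sym:prop:degree} with the combinatorial structure of the main loop of {\sf SolveLMI}: at each rank level $r$, the subroutine {\sf LowRankSymRec} ranges over the collection of subsets $\iota \subset \{1,\ldots,m\}$ with $\sharp\iota = m-r$, computes one parametrization $q_\iota$ per subset via {\sf RatPar}, and then unions them through the call to {\sf Union}. Since unioning (disjoint or not) rational parametrizations produces a parametrization whose degree is at most the sum of the input degrees, the degree at level $r$ is bounded by the number of subsets, $\binom{m}{m-r} = \binom{m}{r}$, times the uniform per-subset bound $\theta(m,n,r)$ of Proposition~\ref{sym:prop:degree}.

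For the first claim, I would argue that the outer loop (Step~\ref{sym:step:main:2} of {\sf SolveLMI}) runs $r$ from $1$ up to the first value for which {\sf CheckLMI} returns {\tt true}; by Theorem~\ref{theo:correctness:sym} this stopping index is exactly $\minrank$ when $\spec \neq \emptyset$. Summing the per-level bound $\binom{m}{r}\theta(m,n,r)$ over $r = 1,\ldots,\minrank$ gives $\sum_{r\leq \minrank}\binom{m}{r}\theta(m,n,r)$, which is the announced bound on the total degree of all parametrizations produced.

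For the second claim, the parametrization whose solutions intersect $\spec$ is the one returned at level $r=\minrank$, whose degree is bounded by $\binom{m}{\minrank}\theta(m,n,\minrank)$. Two elementary estimates finish the argument. First, $p_r = (m-r)(m+r+1)/2$ is maximized (over $0\leq r\leq m-1$) at $r=0$, giving $p_r \leq \tfrac{m^2+m}{2}$ for every $r$; hence by the last sentence of Proposition~\ref{sym:prop:degree},
\[
\theta(m,n,r) \,\leq\, \binom{p_r+n}{n}^{3} \,\leq\, \binom{\tfrac{m^2+m}{2}+n}{n}^{3}.
\]
Second, $\binom{m}{\minrank} \leq 2^m$ is absorbed into the $\bigO(\cdot)$ constant, yielding the stated $\bigO\!\left(\binom{(m^2+m)/2 + n}{n}^{3}\right)$ bound.

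The only subtlety I would be careful about is justifying the claim that {\sf Union} of rational parametrizations adds degrees; this is standard but worth stating explicitly, since it is what turns the per-$\iota$ bound into the $\binom{m}{r}$ factor. Beyond that, the proof is a bookkeeping exercise that combines Proposition~\ref{sym:prop:degree}, the loop structure of {\sf SolveLMI}/{\sf LowRankSymRec}, and the monotonicity of $p_r$ in $r$. No new geometric input is needed.
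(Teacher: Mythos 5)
Your argument for the first claim is the same as the paper's: count the $\binom{m}{m-r}=\binom{m}{r}$ subsets $\iota$ treated at each rank level, note that the outer loop stops once $r$ reaches $\minrank$, and apply the per-parametrization bound $\theta(m,n,r)$ of Proposition \ref{sym:prop:degree}. (Both you and the paper gloss over the recursive calls of {\sf LowRankSymRec} in $n$ in the same way, so I will not press that point.)

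For the second claim there is a genuine flaw in your argument. You interpret ``the rational parametrization whose solutions intersect $\spec$'' as the union over all $\iota$ at level $r=\minrank$, which introduces the factor $\binom{m}{\minrank}$, and you then assert that $\binom{m}{\minrank}\leq 2^m$ is ``absorbed into the $\bigO(\cdot)$ constant.'' It is not: $m$ is an input parameter, so $2^m$ is not a constant, and it is not dominated by the ratio $\binom{\frac{m^2+m}{2}+n}{n}^3\big/\binom{p_{\minrank}+n}{n}^3$ in general (that ratio is only polynomial in $m$ for fixed $n$). The paper's proof avoids this entirely by bounding the degree of the \emph{single} parametrization $q_\iota$ whose solutions contain the feasible point: by Proposition \ref{sym:prop:degree} its degree is at most $\theta(m,n,\minrank)\leq\binom{p_{\minrank}+n}{n}^3$, and since $p_0\geq p_1\geq\cdots\geq p_r$ this is at most $\binom{p_0+n}{n}^3=\binom{\frac{m^2+m}{2}+n}{n}^3$ --- exactly the two elementary estimates you already have, minus the spurious binomial factor. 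The repair is therefore easy (drop the $\binom{m}{\minrank}$ factor by adopting the single-$\iota$ reading), but as written the step justifying the asymptotic bound does not hold.
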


\begin{proof}
Remark that the number of subsets $\iota \subset \{1, \ldots, m\}$, with $\sharp\iota=m-r$ is $\binom{m}{m-r}=\binom{m}{r}$,
and that {\sf SolveLMI} stops when $r$ reaches $\minrank$. Hence the first part follows by applying Proposition \ref{sym:prop:degree}.
Finally, remark that $p_0 \geq p_1 \geq \cdots \geq p_{r} \geq \cdots$ for all $m$, and hence, by Proposition \ref{sym:prop:degree}, the
degree of the rational parametrization whose solutions intersect $\spec$ is of the order of
$\binom{p_r+n}{n}^3 \leq \binom{p_0+n}{n}^3 = \binom{\frac{m^2+m}{2}+n}{n}^3$.
\end{proof}

In the column {\sf deg} of Table \ref{degrees+bounds} we report the degrees
of the rational parametrization $q_\iota$ returned by {\sf LowRankSymRec} at Step \ref{sym:step:lowrec:2a},
compared with its bound $\theta(m,n,r)$ computed in Proposition \ref{sym:prop:degree}.
For this table, the input are randomly generated symmetric pencils with rational coefficients.
When the algorithm does not compute critical points (that is, when the Lagrange system
generates the empty set) we put ${\sf deg}=0$. 

We recall that the routine {\sf LowRankSymRec} computes points in components of the real algebraic set $\calD_r \cap \RR^n$
not meeting the subset $\calD_{r-1} \cap \RR^n$, hence of the expected rank $r$. Moreover, we recall
that {\sf LowRankSym} calls recursively its subroutine {\sf LowRankSymRec}, eliminating at each call
the first variable. Hence, the total number of critical points computed by {\sf LowRankSym} for a given
expected rank $r$ is obtained by summing up the integer in column {\sf deg} for every
admissible value of $n$.
We remark here that both the degree and the bound are constant and equal to 0 if $n$ is large enough.
Hence, the previous sum is constant for large values of $n$. Similar behaviors appear, for example, when
computing the Euclidean Distance degree (EDdegree) of determinantal varieties, as in \cite{EDdegree}
or \cite{OtSpaStu}. In \cite[Table\,1]{OtSpaStu}, the authors report on the EDdegree of determinantal
hypersurfaces generated by linear matrices $A(x)=A_0+x_1A_1+\cdots+x_nA_n$: for generic weights in the distance
function, and when the codimension of the
vector space generated by $A_1,\ldots,A_n$ is small (for us, when $n$ is big, since matrices $A_i$ are
randomly generated, hence independent for $n \leq \binom{m+1}{2}=\dim {\mathbb{S}}_m(\QQ)$) the EDdegree is constant.
Similar comparisons can be done with data in \cite[Example\,4]{OtSpaStu} and \cite[Corollary\,3.5]{OtSpaStu}.

\begin{table}[!ht]
\centering
\begin{tabular}{|l|rr|l|rr|}
\hline
$(m,r,n)$ & {\sf deg} & $\theta(m,n,r)$ & $(m,r,n)$ & {\sf deg} & $\theta(m,n,r)$ \\
\hline
\hline
$(3,2,2)$ & 6 & 9 & $(4,3,9)$ & 0 & 0  \\
$(3,2,3)$ & 4 & 16 & $(5,2,5)$ & 0 & 0 \\
$(3,2,4)$ & 0 & 15 & $(5,2,6)$ & 35 & 924 \\
$(3,2,5)$ & 0 & 6 & $(5,2,7)$ & 140 & 10296 \\
$(3,2,6)$ & 0 & 0 & $(5,3,3)$ & 20 & 84 \\
$(4,2,3)$ & 10 & 35 & $(5,3,4)$ & 90 & 882 \\
$(4,2,4)$ & 30 & 245 & $(5,4,2)$ & 20 & 30 \\
$(4,2,5)$ & 42 & 896 & $(5,4,3)$ & 40 & 120 \\
$(4,2,6)$ & 30 & 2100 & $(5,4,4)$ & 40 & 325 \\
$(4,2,7)$ & 10 & 3340 & $(5,4,5)$ & 16 & 606 \\
$(4,2,8)$ & 0 & 3619 & $(6,3,3)$ & 0 & 0 \\
$(4,2,9)$ & 0  & 2576 & $(6,3,4)$ & 0 & 0 \\
$(4,2,12)$ & 0  & 0 & $(6,3,5)$ & 0 & 0 \\
$(4,3,3)$ & 16 & 52 & $(6,3,6)$ & 112 & 5005 \\
$(4,3,4)$ & 8 & 95 & $(6,4,2)$ & 0 & 0 \\
$(4,3,7)$ & 0 & 20 & $(6,4,3)$ & 35 & 165 \\
$(4,3,8)$ & 0 & 0 &  $(6,5,3)$ & 80 & 230 \\
\hline
\end{tabular}
\caption{Degrees and bounds for rational parametrizations}
\label{degrees+bounds}
\end{table}

The values in column {\sf deg} of Table \ref{degrees+bounds} must also be compared
with the associated algebraic degree of semidefinite programming. Given integers $k,m,r$ with
$r \leq m-1$, Nie, Ranestad, Sturmfels and von Bothmer computed in \cite{stu,SDPformula} formulas for the algebraic
degree $\delta(k,m,r)$ of a generic semidefinite program associated to $m \times m$ $k-$variate linear matrices,
with expected rank $r$. Since the values in column {\sf deg} match exactly the corresponding
values in \cite[Table 2]{stu}, we conclude this section with the following expected result, which is
a work in progress.

\begin{conjecture} \label{conj} Let $A \in {\mathbb{S}}_m^{n+1}(\QQ)$ be the
  input of {\sf SolveLMI}, and suppose that
  $\spec = \{ x \in \RR^n \mymid A(x) \succeq 0\}$ is not empty. Let
  $\delta(k,m,r)$ be the algebraic degree of a generic semidefinite
  program with parameters $k,m,r$ as in \cite{stu,SDPformula}. If
  $\sfH$ holds, then the sum of the degrees of the rational
  parametrizations computed during {\sf SolveLMI} is
\[
\sum_{r=1}^{\minrank}\,\,\binom{m}{r} \,\sum_{k=p_r-r(m-r)}^{\min(n,p_r+r(m-r))}\,\,\delta(k,m,r),
\]
where $p_r=(m-r)(m+r+1)/2$.
\end{conjecture}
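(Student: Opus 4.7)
The plan is to identify, for each recursion level of {\sf LowRankSymRec} and each index set $\iota$, the degree of the rational parametrization $q_\iota$ computed at Step \ref{sym:step:lowrec:2a} with the algebraic degree $\delta(k,m,r)$ of the generic semidefinite program in $k$ variables at rank $r$, where $k$ is the number of free variables at the current recursion depth. Summing over the $\binom{m}{r}$ choices of $\iota$, over the recursion depths giving $k \in \{\binom{m-r+1}{2},\dots,\min(n,p_r+r(m-r))\}$, and over $r=1,\dots,\minrank$, then yields precisely the formula in the statement. The lower bound $\binom{m-r+1}{2}=p_r-r(m-r)$ on $k$ coincides with the termination condition of {\sf LowRankSym} (Step \ref{sym:step:low:1}), while the upper bound is forced by the vanishing of the multilinear B\'ezout number $\theta(m,k,r)$ beyond $p_r+r(m-r)$, shown in Proposition \ref{sym:prop:degree}.

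To carry out the degree identification, I would first reduce to a single recursion level. At depth $n-k$, the algorithm handles a symmetric pencil $\tilde A$ in $k$ variables obtained by specializing $n-k$ coordinates to generic rationals. Proposition \ref{prop:regularity:sym}(2) ensures that $\tilde A$ still satisfies $\sfP_1$ for the chosen parameters. For fixed $\iota$, the projection $(x,y)\mapsto x$ restricts to an injective regular map from $\calV_r(\tilde A\circ M,\iota)\cap\{\rank A(Mx)=r\}$ to $\calD_r(\tilde A)\setminus\calD_{r-1}(\tilde A)$, whose image is the open subset where $\ker A(Mx)$ projects isomorphically to the $\iota$-coordinates. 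For generic $\tilde A$, the complement is a proper Zariski closed subset of $\calD_r(\tilde A)\setminus\calD_{r-1}(\tilde A)$. Through Proposition \ref{prop:dimension:sym}, the critical points of $\Pi_1$ on the incidence variety then correspond bijectively to the critical points of the linear form $w^\top x$ on $\calD_r(\tilde A)\setminus\calD_{r-1}(\tilde A)$, where $w^\top$ is the first row of $M^{-1}$.

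The next step is to invoke the definition of $\delta(k,m,r)$. Since $M$ is chosen generically, $w^\top x$ is a generic linear form, and since $\tilde A$ is itself generic, the rank-constrained critical point problem on $\calD_r(\tilde A)$ is a generic instance of the semidefinite setup studied in \cite{stu,SDPformula}. By definition, this count is $\delta(k,m,r)$, and it equals the degree of $q_\iota$ since the zero-dimensional set encoded by the Lagrange system $\ell(\tilde A\circ M,\iota)$ injects into the critical locus. Summing $\delta(k,m,r)$ over the $\binom{m}{r}$ index sets $\iota$ and over admissible $k$ and $r$ then produces the claimed formula.

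The main obstacle is to make the degree identification rigorous, namely to prove that no critical point is lost or gained under the bijection between $\calV_r(\tilde A\circ M,\iota)$ and $\calD_r(\tilde A)\setminus\calD_{r-1}(\tilde A)$. Three delicate points must be controlled: (i) critical points of the linear form on $\calD_r(\tilde A)$ lying in the closed locus where the $\iota$-projection of the kernel fails to be an isomorphism must be excluded by a generic position argument refining Proposition \ref{prop:closure:sym}; (ii) critical points which might accumulate on $\calD_{r-1}$ must be shown to contribute to lower values of $r$ only, so that summation over $r\le\minrank$ does not double-count; and (iii) one must verify that the count $\delta(k,m,r)$ from \cite{stu,SDPformula}, originally defined as the algebraic degree of a generic SDP, precisely equals the number of critical points of a generic linear form on $\calD_r\setminus\calD_{r-1}$ in the sense used here, as opposed to a count mixing strata. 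Establishing (iii) most likely requires a direct comparison of the multilinear structure of our Lagrange formulation with the determinantal representation used in \cite{SDPformula}, and is the principal technical hurdle.
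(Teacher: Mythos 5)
First, note that the paper does not prove this statement at all: it is stated as a \emph{conjecture}, and the authors say explicitly just before it that they ``did not succeed in proving exact formulas for such degrees'' and that the identity is ``a work in progress'', supported only by the experimental match between the column {\sf deg} of their tables and \cite[Table 2]{stu}. So there is no paper proof to compare against; the question is whether your proposal closes the gap the authors left open. It does not. Your outline is essentially the heuristic the authors themselves have in mind (each $q_\iota$ encodes the critical points of a generic linear form $w^\top x$ on the stratum $\calD_r\setminus\calD_{r-1}$ of a generic $k$-variate symmetric pencil, and this count should be $\delta(k,m,r)$), and the bookkeeping over $\iota$, over recursion depth $k$, and over $r\le \minrank$ is consistent with the claimed formula. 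But you explicitly defer the central step, your point (iii): proving that the number of critical points of a generic linear form on $\calD_r\setminus\calD_{r-1}$, for a generic symmetric pencil in $k$ variables, equals the algebraic degree $\delta(k,m,r)$ of \cite{stu,SDPformula}. That identification is exactly the open content of the conjecture --- $\delta(k,m,r)$ is defined in \cite{stu} via the optimal solutions of a generic primal--dual SDP pair (a complementarity system on a generic affine section of the cone), not directly as a critical-point count on the rank stratum of a pencil, and bridging the two formulations is the whole difficulty. Declaring it ``the principal technical hurdle'' leaves the proof where the authors left it.

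Two further points would also need genuine work rather than assertion. First, the degree of $q_\iota$ equals the number of critical points only if the Lagrange system $\ell(A\circ M,\iota)$ has no solutions outside the rank-$r$ locus and no multiplicities; Proposition \ref{prop:dimension:sym} only gives a one-way containment (``the projection \dots{} \emph{contains} the set of critical points''), so a priori $\deg q_\iota$ could exceed the critical-point count, and extraneous components with $\rank A(Mx)<r$ or with $\det Y_\iota$ degenerate are not excluded by the results in the paper. Second, your claim that the contribution at each recursion depth $k$ is $\delta(k,m,r)$ per chart, independently of the specialization $A_0+tA_1$ performed at Step \ref{sym:step:lowrec:3}, requires showing that the specialized pencil is again \emph{generic} in the sense of \cite{stu} (not merely that it satisfies $\sfP_1$, which is all Proposition \ref{prop:regularity:sym}(2) gives); genericity in the sense needed for the degree formula of \cite{SDPformula} is a stronger, unverified condition. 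In short, the proposal is a reasonable road map that mirrors the authors' own intuition, but it is not a proof, and the statement remains a conjecture.
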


\subsection{The complexity of {\sf SolveLMI}}
\label{sec4:ssec2}

\subsubsection*{Complexity of some subroutines}
We first provide complexity estimates for subroutines {\sf SolveLinear}, {\sf CheckLMI}, {\sf Project},
{\sf Lift}, {\sf Image} and {\sf Union}.

{\it Complexity of {\sf SolveLinear}}. This subroutine computes, if it exists, a solution of the $A(x)=0$.
This can be essentially performed by Gaussian elimination. The complexity of solving $\binom{m+1}{2}$ linear
equations in $n$ variables is linear in $\binom{m+1}{2}$ and cubic in $n$.

{\it Complexity of {\sf CheckLMI}}. This subroutine can be performed as follows. Let $q \subset \ZZ[t]$ be the
rational parametrization in the input of {\sf CheckLMI}. The spectrahedron $\spec = \{x \in \RR^n \mymid A(x)
\succeq 0\}$ is the semi-algebraic set defined, {\it e.g.}, by sign conditions on the coefficients of the
characteristic polynomial
\[
p(s,x) = \det (A(x)+s\,\Id_m) = f_m(x)+f_{m-1}(x)s + \cdots + f_{1}(x)s^{m-1} + s^m.
\]
That is, $\spec = \{x \in \RR^n \mymid f_i(x) \geq 0, \fall i=1, \ldots, m\}$.
We make the substitution $x_i \leftarrow q_i(t)/q_0(t)$ in $A(x)$ and
compute the coefficients of $p(s,x(t))$, that are rational functions of the
variable $t$. Hence {\sf CheckLMI} boils down to deciding on the sign of $m$
univariate rational functions (that is, of $2m$ univariate polynomials) over
the finite set defined by $q_{n+1}(t)=0$.
We deduce that the complexity of {\sf CheckLMI} is polynomial in $m$ and on
the degree of $q_{n+1}$ (that is, on the degree of $q$) see \cite[Ch.\,13]{BaPoRo06}.

{\it Complexity of {\sf Project, Lift, Image} and {\sf Union}} Estimates for the arithmetic complexities of
these routines are given in \cite[Ch.\,10]{SaSc13}. In particular, if $\theta=\theta(m,n,r)$ is the bound
computed in Proposition \ref{sym:prop:degree}, and $\tilde{n}=n+r(m-r)+p_r$, then:
\begin{itemize}
\item
By \cite[Lemma\,10.1.5]{SaSc13},
  {\sf Project} is performed within $\softO(\tilde{n}^2\theta^2)$ operations;
\item
By \cite[Lemma\,10.1.6]{SaSc13},
  {\sf Lift} is performed within $\softO(\tilde{n}\theta^2)$ operations;
\item
By \cite[Lemma\,10.1.1]{SaSc13},
  {\sf Image} is performed within $\softO(\tilde{n}^2\theta+\tilde{n}^3)$ operations;
\item
By \cite[Lemma\,10.1.3]{SaSc13},
  {\sf Union} is performed within $\softO(\tilde{n}\theta^2)$ operations.
\end{itemize}

\subsubsection*{Complexity of {\sf LowRankSym} and {\sf SolveLMI}}
The \new{complexity of {\sf LowRankSym} boils essentially down to the complexity of {\sf LowRankSymRec},
that is the complexity of {\sf RatPar}. This is performed with the symbolic homotopy algorithm
\cite{jeronimo2009deformation}: we bound its complexity in this section. We just remark that computing the
dimension of $\calD_r$ at Step \ref{sym:step:low:1} of {\sf LowRankSym} and performing the control
routine {\sf IsReg} can be done by combining the Jacobian criterion and Gr\"obner bases computations. 
Our complexity analysis omits this step.}

We recall that the simplified Lagrange system defined in the proof of Proposition \ref{sym:prop:degree}
contains: $p_r=(m-r)(m+r+1)/2$ polynomials of multidegree bounded by $(1,1,0)$; $n-1$ polynomials of
multidegree bounded by $(0,1,1)$; $r(m-r)$ polynomials of multidegree bounded by $(1,0,1)$.
Let us denote by $\ell$ this system. We denote by
\begin{align*}
\Delta_{xy} & = \{ 1, x_i, y_j, x_iy_j \mymid i=1,\ldots, n, j=1, \ldots, r(m-r) \} \\
\Delta_{yz} & = \{ 1, y_j, z_k, y_jz_k \mymid j=1,\ldots, r(m-r), k=2 \ldots, p_r \} \\
\Delta_{xz} & = \{ 1, x_i, z_k, x_iz_k \mymid i=1,\ldots, n, k=2, \ldots, p_r \}
\end{align*}
the supports of the aforementioned three groups of polynomials.
Let $\widetilde{\ell} \subset \QQ[x,y,z]$ be a polynomial system such that:
\begin{itemize}
\item the length of $\tilde{\ell}$ equals that of $\ell$;
\item for $i=1, \ldots, n-1+m^2-r^2$, the support of $\tilde{\ell}_i$ equals that of $\ell_i$;
\item the solutions of $\tilde{\ell}$ are known.
\end{itemize}
Remark that $\widetilde{\ell}$ can be easily built by considering suitable products of linear forms.
We build the homotopy
\begin{equation} \label{homotopy} 
\tau \ell + (1-\tau) \tilde{\ell} \subset \QQ[x,y,z,\tau],
\end{equation}
where $\tau$ is a new variable. The system \eqref{homotopy} defines a curve.
From \cite[Proposition 6.1]{jeronimo2009deformation}, if the solutions of $\tilde{\ell}$ are known, one can compute a rational
parametrization of $\zeroset{t \ell + (1-t) \tilde{\ell}}$ within $\bigO( (\tilde{n}^2N\log Q + \tilde{n}^{\omega+1}) e e')$
arithmetic operations over $\QQ$, where: $\tilde{n}$ is the number of variables in $\ell$; $N= p_r \sharp \Delta_{xy}+ (n-1) \sharp
\Delta_{yz}+r(m-r) \sharp \Delta_{xz}$; $Q = \max\{\Vert q \Vert : q \in \Delta_{xy} \cup \Delta_{yz} \cup \Delta_{xz}\}$; $e$ is the 
number of isolated solutions of $\ell$; $e'$ is the degree of $\zeroset{t \ell + (1-t) \tilde{\ell}}$; $\omega$ is the exponent of
matrix multiplication.

The technical lemma below gives a bound on the degree of $\zeroset{t \ell + (1-t) \tilde{\ell}}$.

\begin{lemma} \label{mult:bounds:homotopy:sym}
Let $\calG_{m,n,r}$ and $\theta(m,n,r)$ be the set and the bound defined in Proposition \ref{sym:prop:degree}, and suppose that
$\calG_{m,n,r} \neq \emptyset$. Let $e'$ be the degree of $\zeroset{t \ell + (1-t) \tilde{\ell}}$. Then
$e' \in \bigO\left((n+p_r+r(m-r))\,\min\{n,p_r\}\,\theta(m,n,r)\right).$
\end{lemma}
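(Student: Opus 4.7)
The plan is to bound $e'$ via the multilinear B\'ezout inequality applied to the system defining the homotopy curve $\zeroset{\tau \ell + (1-\tau)\tilde{\ell}}$ completed by a generic affine hyperplane, and then to extract the $s_\tau$-dependence in the resulting coefficient so as to reduce the estimate to the formula for $\theta(m,n,r)$ from Proposition \ref{sym:prop:degree}.

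First I would set up the multidegree pattern of the homotopy. Recalling the simplification used in the proof of Proposition \ref{sym:prop:degree}, the system $\ell$ consists of $p_r$ polynomials of multidegree $(1,1,0)$ in $(x,y,z)$, together with $n-1$ polynomials of multidegree $(0,1,1)$ and $r(m-r)$ polynomials of multidegree $(1,0,1)$. Since $\tilde{\ell}$ is built with the same supports and $\tau$ enters the homotopy linearly, each polynomial of $\tau \ell + (1-\tau)\tilde{\ell}$ has multidegree $(1,1,0,1)$, $(0,1,1,1)$ or $(1,0,1,1)$ respectively, in the four groups $(x,y,z,\tau)$. Counting yields $n+r(m-r)+p_r-1$ equations in $n+r(m-r)+p_r$ variables; projecting to the $\tau$-axis and using the zero-dimensional bound of Proposition \ref{sym:prop:degree} on the generic fiber, one sees that $\zeroset{\tau \ell + (1-\tau)\tilde{\ell}}$ is equidimensional of dimension $1$.

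Next, I would bound $e'$ by the B\'ezout number of the square system obtained by adding to the homotopy a generic affine hyperplane (of multidegree $(1,1,1,1)$). By the multilinear B\'ezout inequality of \cite[Ch.\,11]{SaSc13},
\[
e' \;\leq\; [s_x^n\, s_y^{r(m-r)}\, s_z^{p_r-1}\, s_\tau^1]\; Q(s_x,s_y,s_z,s_\tau),
\]
where $Q = (s_x+s_y+s_\tau)^{p_r}(s_y+s_z+s_\tau)^{n-1}(s_x+s_z+s_\tau)^{r(m-r)}(s_x+s_y+s_z+s_\tau)$. Using the identity $(s+s_\tau)^k = s^k + k\,s^{k-1}s_\tau + \bigO(s_\tau^2)$ for each of the first three factors, together with $(s_x+s_y+s_z)+s_\tau$ for the hyperplane factor, the extraction of the $s_\tau^1$-coefficient decomposes into four summands: one equal to $\theta(m,n,r)$ (from the $s_\tau$ contributed by the hyperplane factor combined with the $s_\tau^0$ part of the other factors), plus three summands with explicit prefactors $p_r$, $n-1$, $r(m-r)$ respectively, each of the form
$[s_x^n s_y^{r(m-r)} s_z^{p_r-1}]\,\bigl((s_x+s_y+s_z)\, R_i\bigr)$, where $R_i$ is obtained from $(s_x+s_y)^{p_r}(s_y+s_z)^{n-1}(s_x+s_z)^{r(m-r)}$ by lowering the exponent of the $i$-th factor by one.

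Finally, I would compare each of these three contributions to $\theta(m,n,r)$. Expanding $(s_x+s_y+s_z)R_i$ as a sum of three monomial-times-$R_i$ terms and extracting the B\'ezout coefficient yields, in each case, a sum over at most $|\calG_{m,n,r}|$ indices of a product of three binomial coefficients, each of which differs from the corresponding term of $\theta(m,n,r)$ by a shift of one exponent. These shifts are controlled, using the identities $\binom{p-1}{j-1}=(j/p)\binom{p}{j}$ and $\binom{p-1}{j}\leq \binom{p}{j}$, so that each shifted binomial product is bounded by the original up to a uniform constant. Since $|\calG_{m,n,r}|\in \bigO(\min(n,p_r))$, each of the three contributions lies in $\bigO(\min(n,p_r)\, \theta(m,n,r))$; collecting the prefactors $p_r$, $n-1$, $r(m-r)$, and the leftover $\theta(m,n,r)$ from the first summand, yields the announced bound.

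The main obstacle will be the termwise comparison in the last step: while the prefactors $p_r$, $n-1$, $r(m-r)$ are immediate from the $s_\tau$-extraction and produce the $(n+p_r+r(m-r))$ factor, establishing that each of the three shifted B\'ezout coefficients is bounded by a constant multiple of $\min(n,p_r)\, \theta(m,n,r)$ requires a careful case analysis of the three distinct shifts of the summation index in the formula of Proposition \ref{sym:prop:degree}, and a Pascal-rule manipulation ensuring that the range of indices contributing to each shifted sum is essentially contained in $\calG_{m,n,r}$.
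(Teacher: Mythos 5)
Your setup is essentially the paper's. Adding a generic hyperplane of multidegree $(1,1,1,1)$ and extracting the coefficient of $s_x^n s_y^{r(m-r)} s_z^{p_r-1} s_\tau$ is numerically identical to the paper's computation of the sum of the coefficients of $(s_x+s_y+s_\tau)^{p_r}(s_y+s_z+s_\tau)^{n-1}(s_x+s_z+s_\tau)^{r(m-r)}$ modulo $\langle s_x^{n+1}, s_y^{r(m-r)+1}, s_z^{p_r}, s_\tau^2\rangle$, and your four-summand decomposition with prefactors $1$, $p_r$, $n-1$, $r(m-r)$ is exactly the paper's splitting $q\equiv q_1+s_\tau(q_2+q_3+q_4)$. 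So the architecture is fine.

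The gap is in the final comparison. You claim that each shifted binomial product is bounded by the corresponding term of $\theta(m,n,r)$ up to a \emph{uniform constant}, and you then source the factor $\min\{n,p_r\}$ from $\sharp\calG_{m,n,r}$. Both points are off. Take the summand with prefactor $p_r$: extracting the top coefficient of $(s_x+s_y+s_z)R_1$ produces three sums, corresponding to lowering by one the exponent of $s_x$, of $s_y$, or of $s_z$. The first two are indeed termwise dominated by the terms of $\theta(m,n,r)$ via $\binom{p_r-1}{j}\le\binom{p_r}{j}$ and Pascal's rule. But the third (the paper's $\Sigma_C$) involves the ratio $\binom{n-1}{i-1}\big/\binom{n-1}{i}=i/(n-i)$, a shift that lowers only the \emph{lower} index with the same upper index; neither of your two identities applies, and this ratio can be as large as $n-1$, so it is not a uniform constant (the paper explicitly notes that $\Sigma_C\le\theta$ is false). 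The correct mechanism is to bound this termwise ratio by $\min\{n,p_r\}$ and then sum over the \emph{same} index set, which yields $\Sigma_C\le(1+\min\{n,p_r\})\,\theta(m,n,r)$ directly; the factor $\min\{n,p_r\}$ in the lemma comes from this termwise ratio, and $\sharp\calG_{m,n,r}$ plays no role. As written, your argument either fails (the constant-ratio claim is false), or, if you repair the termwise ratio to $\min\{n,p_r\}$ but still multiply by $\sharp\calG_{m,n,r}$, it only yields the weaker bound $\bigO\left(\min\{n,p_r\}^2\,\theta(m,n,r)\right)$ per summand.
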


\begin{proof}
Similarly to Proposition \ref{sym:prop:degree}, we exploit the multilinear structure of $t \ell + (1-t) \tilde{\ell}$, to compute
a bound on $e'$. The system is bilinear with respect to $x,y,z,\tau$. We recall $\sharp x = n, \sharp y = r(m-r), \sharp z =
p_r-1, \sharp \tau = 1$, with $p_r = (m-r)(m+r+1)/2$. By \cite[Ch.\,11]{SaSc13}, $e'$ is bounded by the sum of the coefficients of
$
q=(s_x+s_y+s_\tau)^{p_r}(s_y+s_z+s_\tau)^{n-1}(s_x+s_z+s_\tau)^{r(m-r)}
$
modulo $I=\langle s_x^{n+1}, s_y^{r(m-r)+1}, s_z^{p_r}, s_\tau^2\rangle \subset
\ZZ[s_x,s_y,s_z,s_\tau]$. We see that $q = q_1 + s_\tau(q_2+q_3+q_4) + g$ with $s_\tau^2$ that divides
$g$ and
\begin{align*}
q_1 &= (s_x+s_y)^{p_r}(s_y+s_z)^{n-1}(s_x+s_z)^{r(m-r)} \\
q_2 &= p_r (s_x+s_y)^{p_r-1}(s_y+s_z)^{n-1}(s_x+s_z)^{r(m-r)} \\
q_3 &= (n-1)(s_x+s_y)^{p_r}(s_y+s_z)^{n-2}(s_x+s_z)^{r(m-r)}\\
q_4 &= r(m-r)(s_x+s_y)^{p_r}(s_y+s_z)^{n-1}(s_x+s_z)^{r(m-r)-1}.
\end{align*}
Hence $q \equiv q_1+s_\tau(q_2+q_3+q_4) \mod I$, and the bound is given by the sum of the contributions of $q_1, q_2, q_3$
and $q_4$ (multiplying $q_2,q_3,q_4$ by $s_\tau$ does not change the sum of the coefficients modulo $I$).
The contribution of $q_1$ is the sum of the coefficients of its class modulo $I'=\langle s_x^{n+1}, s_y^{r(m-r)+1}, s_z^{p_r} \rangle.$
This has been computed in Proposition \ref{sym:prop:degree}, and coincides with $\theta(m,n,r)$.
We compute the contribution of $q_2$.
Let $q_2=p_r \tilde{q}_2$. It is sufficient to compute the sum of the coefficients of $\tilde{q}_2$ modulo $I'$ (defined above), multiplied
by $p_r$. Since $\deg\,\tilde{q}_2 = n-2+p_r+r(m-r)$, and since the maximal powers admissible modulo $I'$ are
$s_x^{n}, s_y^{r(m-r)},$ and $s_z^{p_r-1}$, three configurations are possible.
\begin{itemize}
\item[(A)] The coefficient of $s_x^{n-1}s_y^{r(m-r)}s_z^{p_r-1}$ in $\tilde{q}_2$, that is
  \[
  \Sigma_A=\sum_{k}\binom{p_r-1}{n-1-k}\binom{n-1}{k-1+p_r-r(m-r)}\binom{r(m-r)}{k};
  \]
\item[(B)] the coefficient of $s_x^{n}s_y^{r(m-r)-1}s_z^{p_r-1}$ in $\tilde{q}_2$, that is
  \[
  \Sigma_B=\sum_{k}\binom{p_r-1}{n-k}\binom{n-1}{k-1+p_r-r(m-r)}\binom{r(m-r)}{k};
  \]
\item[(C)] the coefficient of $s_x^{n}s_y^{r(m-r)}s_z^{p_r-2}$ in $\tilde{q}_2$, that is
  \[
  \Sigma_C=\sum_{k}\binom{p_r-1}{n-k}\binom{n-1}{k-2+p_r-r(m-r)}\binom{r(m-r)}{k}.
  \]
\end{itemize}
Hence we need to bound the expression $p_r(\Sigma_A+\Sigma_B+\Sigma_C)$. One can easily check that
$\Sigma_A \leq \theta(m,n,r)$ and $\Sigma_B \leq \theta(m,n,r)$, while the same inequality is false
for $\Sigma_C$. However, we claim that $\Sigma_C \leq (1+\min\{n,p_r\})\,\theta(m,n,r)$ and
hence that the contribution of $q_2$ is $p_r(\Sigma_A+\Sigma_B+\Sigma_C) \in \bigO\left(p_r\,\min\{n,p_r\}\,\theta(m,n,r)\right)$.
We prove now this claim. Let
\begin{align*}
\chi_1 &= \max\{0,n-p_r\} \qquad \,\,\,\,\,\,\,\,\,\, \chi_2 = \min\{n-p_r+r(m-r),r(m-r)\} \\
\alpha_1 &= \max\{0,n-p_r+1\} \qquad \alpha_2 = \min\{n-p_r+r(m-r)+1,r(m-r)\}
\end{align*}
so that the sum in $\theta(m,n,r)$ runs over $\chi_1 \leq k \leq \chi_2$ and that in $\Sigma_C$ over $\alpha_1
\leq k \leq \alpha_2$. Remark that $\chi_1 \leq \alpha_1$ and $\chi_2 \leq \alpha_2$. 
Denote by $\varphi(k)$ the $k-$th term in the sum defining $\Sigma_C$, and
by $\gamma(k)$ the $k-$th term in the sum defining $\theta(m,n,r)$.
Then for all indices $k$, admissible both for $\theta(m,n,r)$ and $\Sigma_C$,
that is for $\alpha_1 \leq k \leq \chi_2$, one gets, by basic properties
of binomial coefficients, that
$\varphi(k) = \Psi(k)\,\gamma(k)$, with $\Psi(k) = \frac{k-1+p_r-r(m-r)}{n-k-p_r+r(m-r)-1}$.
When $k$ runs over all admissible indices, $\Psi(k)$ is non-decreasing monotone, and its maximum is $\Psi(\chi_2)$
and is bounded by $\min\{n,p_r\}$. We deduce the claimed inequality $\Sigma_C \leq (1+\min\{n,p_r\})\,\theta(m,n,r)$,
since if $\chi_2 < \alpha_2$ then $\chi_2 = \alpha_2-1$ and $\varphi(\alpha_2)$ is bounded above by $\theta(m,n,r)$.

{\it Contributions of $q_3$ and $q_4$.}
As for $q_2$, we deduce that the contribution of $q_3$ is in $\bigO\left(n\,\min\{n,p_r\}\,\theta(m,n,r)\right)$
and that of $q_4$ is in $\bigO\left(r(m-r)\,\min\{n,p_r\}\,\theta(m,n,r)\right)$.
\end{proof}

We use this degree estimate to conclude our complexity analysis of {\sf LowRankSym}.

\begin{proposition} \label{prop:comp:lowranksymrec}
Let $A \in {\mathbb{S}}_{m}^{n+1}(\QQ)$ be the input of {\sf SolveLMI} and $0 \leq r \leq m-1$.
Let $\theta(m,n,r)$ be the bound defined in Proposition \ref{sym:prop:degree}. 
Let $p_r = (m-r)(m+r+1)/2$. Then {\sf RatPar} returns a r.p. within
$
\softO\left(\binom{m}{r}\,(n+p_r+r(m-r))^{7}\,\theta(m,n,r)^2\right)
$
arithmetic operations over $\QQ$.
\end{proposition}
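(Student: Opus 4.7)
The plan is to instantiate the complexity estimate of \cite[Proposition 6.1]{jeronimo2009deformation} recalled just above the statement. For each fixed $\iota\subset\{1,\dots,m\}$ of cardinality $m-r$, computing a rational parametrization of the simplified Lagrange system $\ell = \ell(A\circ M,\iota)$ through the symbolic homotopy $\tau\ell + (1-\tau)\widetilde{\ell}$ costs $\bigO\bigl((\tilde n^2 N\log Q + \tilde n^{\omega+1})\,e\,e'\bigr)$ arithmetic operations over $\QQ$, in the notation of that reference. I would bound each of the five quantities $\tilde n$, $N$, $Q$, $e$, $e'$ in terms of $n$, $m$, $r$ and $\theta(m,n,r)$, and then multiply by $\binom{m}{m-r}=\binom{m}{r}$, which counts the outer loop over $\iota$ at Step \ref{sym:step:lowrec:2} of {\sf LowRankSymRec}.

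For the individual estimates I would argue as follows. First, $\tilde n = n-1+r(m-r)+(p_r-1) \in \bigO(n+p_r+r(m-r))$. Next, all monomials occurring in the supports $\Delta_{xy}$, $\Delta_{yz}$, $\Delta_{xz}$ have partial degree at most one, hence $Q=1$ so the $\log Q$ factor is absorbed by the $\softO$ notation, and their cardinalities are bounded respectively by $(n+1)(r(m-r)+1)$, $(r(m-r)+1)(p_r-1)$ and $(n+1)(p_r-1)$, so that $N \in \bigO(\tilde n^3)$. Proposition \ref{sym:prop:degree} gives $e \leq \theta(m,n,r)$, since the number of isolated solutions of $\ell$ is bounded by the multilinear B\'ezout number of its support. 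Finally, Lemma \ref{mult:bounds:homotopy:sym} yields the crucial bound $e' \in \bigO(\tilde n\,\min\{n,p_r\}\,\theta(m,n,r)) \subset \bigO(\tilde n^2\,\theta(m,n,r))$ on the degree of the deformation curve.

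Combining these bounds produces, for a single multi-index $\iota$, a cost of order $\softO(\tilde n^2 N \cdot e\cdot e') \subset \softO(\tilde n^5\cdot\tilde n^2\,\theta(m,n,r)^2) = \softO(\tilde n^7\,\theta(m,n,r)^2)$; this dominates the contribution of the term $\tilde n^{\omega+1}\,e\,e'$ since classically $\omega+1\leq 4$. Multiplying by the $\binom{m}{r}$ loop over $\iota$ yields exactly the claimed estimate. The main technical point, namely the control of the degree $e'$ of the homotopy curve through a multilinear B\'ezout analysis, has already been carried out in Lemma \ref{mult:bounds:homotopy:sym}; the remaining bounds on $\tilde n$, $N$ and $e$ are routine and follow from the bilinear structure of $\ell$ and $\widetilde{\ell}$ together with the previously established output degree bound $\theta(m,n,r)$.
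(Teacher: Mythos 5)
Your proposal is correct and follows exactly the paper's own route: instantiate the complexity bound of Jeronimo--Matera--Solern\'o--Waissbein for the symbolic homotopy on the simplified Lagrange system, control the degree $e'$ of the deformation curve via Lemma \ref{mult:bounds:homotopy:sym}, bound $e$ by $\theta(m,n,r)$, and multiply by the $\binom{m}{r}$ choices of $\iota$; the paper's proof is just a terser version of this. (Minor quibble: the variable count is $\tilde n = n + r(m-r) + p_r - 1$ rather than $n-1+r(m-r)+(p_r-1)$, but this does not affect the $\bigO(n+p_r+r(m-r))$ estimate or the exponent $7$.)
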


\begin{proof}
Let $\ell$ be the simplified Lagrange system as in the proof of Proposition \ref{sym:prop:degree}.
We consider the bound on the degree of the homotopy curve given by Lemma \ref{mult:bounds:homotopy:sym}.
We deduce the claimed complexity result by applying \cite[Proposition 6.1]{jeronimo2009deformation},
and by recalling that there are $\binom{m}{r}$ many subsets of $\{1,\ldots,m\}$ of cardinality $m-r$.
\end{proof}

We straightforwardly deduce the following complexity estimate for {\sf SolveLMI}.

\begin{theorem}[Complexity of {\sf SolveLMI}] \label{theo:compl:symm}
Let $A \in {\mathbb{S}}_m^{n+1}(\QQ)$ be the input symmetric pencil and suppose that $\sfH$ holds.
Then the number of arithmetic operations performed by {\sf SolveLMI} are in
\[
\softO\left( n\,\binom{\frac{m^2+m}{2}+n}{n}^6\,\sum_{r \leq m-1} \binom{m}{r}\,(n+(m-r)(m+3r))^{7}\right).
\]
\end{theorem}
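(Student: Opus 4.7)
The plan is to assemble the final bound from the complexity estimates already established for the individual subroutines, tracking the three nested layers of iteration in \textsf{SolveLMI}: the outer loop over $r \in \{1,\ldots,m-1\}$ (Step \ref{sym:step:main:2}); the recursion of \textsf{LowRankSymRec}, which at each level decrements the number of variables by one via Step \ref{sym:step:lowrec:3}; and, within each recursive call, the iteration over the $\binom{m}{r}$ subsets $\iota \subset \{1,\ldots,m\}$ of cardinality $m-r$.

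First, I would argue that \textsf{RatPar} is the dominant subroutine at each invocation of \textsf{LowRankSymRec}. The auxiliary routines \textsf{Project}, \textsf{Image}, \textsf{Union} and \textsf{Lift} recalled in Section \ref{sec4:ssec2} have complexities of the form $\softO(\tilde n^{a} \theta^{b})$ with $a\leq 3$ and $b\leq 2$, where $\tilde n = n+p_r+r(m-r)$ and $\theta = \theta(m,n,r)$; similarly, \textsf{SolveLinear} and \textsf{CheckLMI} are polynomial in $m$, $n$ and the output degree. Each is $\softO$-absorbed by the bound for \textsf{RatPar} obtained in Proposition \ref{prop:comp:lowranksymrec}, which already sums the cost over the $\binom{m}{r}$ choices of $\iota$ and reads
\[
\softO\!\left(\binom{m}{r}\,(n+p_r+r(m-r))^{7}\,\theta(m,n,r)^{2}\right).
\]

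Next, I would control the recursion. Since \textsf{LowRankSymRec} performs exactly one recursive call (at Step \ref{sym:step:lowrec:4}), instantiated on a problem with one fewer variable, the recursion depth is at most $n$. To get a per-level bound that is uniform in the depth, I would replace $\theta(m,n,r)$ by the bound $\binom{p_r+n}{n}^{3}$ of Proposition \ref{sym:prop:degree}; this quantity is monotone non-decreasing in $n$ since $\binom{p_r+j+1}{j+1}/\binom{p_r+j}{j} = (p_r+j+1)/(j+1) \geq 1$, and therefore the top-level estimate dominates every intermediate level. Summing over the $O(n)$ recursion levels thus multiplies the top-level bound by a factor of $n$, and summing further over $r$ produces
\[
\softO\!\left(n\,\sum_{r\leq m-1}\binom{m}{r}\,(n+p_r+r(m-r))^{7}\,\theta(m,n,r)^{2}\right).
\]

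Finally, to reach the stated form, I would factor out a uniform bound on $\theta$. Since $p_r = (m-r)(m+r+1)/2 \leq p_0 = m(m+1)/2$ for all $r \in \{0,\ldots,m-1\}$, Proposition \ref{sym:prop:degree} yields $\theta(m,n,r)^{2} \leq \binom{(m^2+m)/2 + n}{n}^{6}$ independently of $r$, and this factor pulls out of the sum. A direct computation gives $p_r + r(m-r) = (m-r)(m+3r+1)/2 \leq (m-r)(m+3r)$ whenever $m+3r\geq 1$, hence $n+p_r+r(m-r) \leq n+(m-r)(m+3r)$. Substituting both bounds reproduces the estimate in the statement. The main difficulty I anticipate is essentially bookkeeping: ensuring that every minor subroutine is genuinely $\softO$-absorbed by the \textsf{RatPar} cost, and that the monotonicity of the $\theta$-upper-bound justifies collapsing the recursion sum to a single factor of $n$ with no hidden dependence on the varying fiber structure at each recursive level.
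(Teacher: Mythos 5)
Your proposal is correct and follows essentially the same route as the paper: bound each call of {\sf LowRankSymRec} by the {\sf RatPar} cost from Proposition \ref{prop:comp:lowranksymrec}, absorb the auxiliary subroutines, multiply by the recursion depth $n$ and sum over $r$, then uniformize via $\theta(m,n,r)\leq\binom{p_r+n}{n}^3$, $p_r\leq p_0$ and $p_r+r(m-r)\leq(m-r)(m+3r)$. Your explicit monotonicity check on $\binom{p_r+n}{n}^3$ in $n$ is a small point of extra care the paper leaves implicit.
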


\begin{proof}
From Proposition \ref{prop:comp:lowranksymrec}, we deduce that {\sf LowRankSymRec} runs essentially
within $\softO(\binom{m}{r}\,(n+p_r+r(m-r))^{7}\,\theta(m,n,r)^2)$ arithmetic operations.
The inequality $\theta(m,n,r) \leq \binom{n+p_r}{n}^3$ is proved in Proposition \ref{sym:prop:degree}.
Moreover, there are at most $n$ recursive calls of {\sf LowRankSymRec} in {\sf LowRankSym},
and {\sf SolveLMI} stops at most when $r$ reaches $m-1$. Finally, the cost of subroutines {\sf SolveLinear},
{\sf CheckLMI}, {\sf Project, Lift, Image} and {\sf Union} is negligible.
We deduce that the complexity of {\sf SolveLMI} is in
$
\softO\left( n\,\sum_{r \leq m-1} \binom{m}{r}\,(n+p_r+r(m-r))^{7}\,\binom{p_r+n}{n}^6\right).
$
Since $p_r \leq p_0 = \frac{m^2+m}{2}$ and $p_r+r(m-r) \leq (m-r)(m+3r)$, we conclude.
\end{proof}


\section{Experiments} \label{sec5}
{\sf SolveLMI} is implemented in a {\sc maple} function, and it is part of a library called {\sc spectra} (Semidefinite
Programming solved Exactly with Computational Tools of Real Algebra), released in September 2015. It collects efficient
and exact algorithms solving a large class of problems in real algebraic geometry and semidefinite optimization.

We present in this section the results of our experiments. These have been performed on a machine with the following
characteristics: Intel(R) Xeon(R) CPU E7540@2.00GHz with 256 Gb of RAM. We rely on the {\sc maple} implementation of the
software {\sc FGb} \cite{faugere2010fgb}, for fast computation of Gr\"obner bases. To compute the rational parametrizations
we use the implementation in {\sc maple} of the change-of-ordering algorithm FGLM \cite{fglm} and of its improved versions
\cite{FM13,newfglm}.

\subsection{Generic symmetric pencils}
\label{sec5:ssec1}

We implemented the function {\sf LowRankSym} and tested the running time of the implementation with input generic symmetric
linear matrices. We recall that the algorithm {\sf SolveLMI} amounts to iterating {\sf LowRankSym} by increasing the
expected rank $r$. \new{By generic data we mean that} a natural number $N \in \NN$ \new{large enough is fixed}, and numerators
and denominators \new{of the entries of $A_\ell, \ell=0, \ldots, n$ are} uniformly \new{generated} in the interval $[-N,N]$.
We report in Table \ref{tab:dense:sym} the timings and the degrees of output rational parametrizations.

{\small
\begin{table}[!ht]
\centering
{\footnotesize
\begin{tabular}{|l|rrrr||l|rrrr|}
\hline
$(m,r,n)$ & {\sf PPC} & {\sf LRS} & {\sf totaldeg} & {\sf deg} & $(m,r,n)$ & {\sf PPC} & {\sf LRS} & {\sf totaldeg} & {\sf deg} \\
\hline
\hline
$(3,2,2)$ & 0.2 & 8   & 9   & 6  &       $(4,3,9)$ & $\infty$ & 28 & 40 & 0 \\
$(3,2,3)$ & 0.3 & 11  & 13  & 4  &       $(4,3,10)$ & $\infty$ & 29 & 40 & 0 \\
$(3,2,4)$ & 0.9 & 13  & 13  & 0     &    $(4,3,11)$ & $\infty$ & 30 & 40 & 0 \\
$(3,2,5)$ & 5.1 & 14  & 13  & 0   &      $(5,2,2)$ & 0.6 & 0 & 0 & 0 \\
$(3,2,6)$ & 15.5 & 15 & 13  & 0   &      $(5,2,3)$      & 0.9 & 0 & 0 & 0 \\
$(3,2,7)$ & 31 & 16   & 13  & 0   &      $(5,2,4)$        & 1 & 1  & 0 & 0 \\
$(3,2,8)$ & 109 & 17  & 13  & 0   &      $(5,2,5)$     & 1.6 & 1 & 0 & 0 \\
$(3,2,9)$ & 230 & 18  & 13  & 0  &       $(5,2,7)$     & $\infty$ & 25856  & 175 & 140   \\
$(4,2,2)$ & 0.2 & 0   & 0   & 0   &      $(5,3,2)$    & 0.4 & 1  & 0 & 0 \\
$(4,2,3)$ & 0.3 & 2   & 10  & 10   &     $(5,3,3)$    & 0.5 & 3  & 20 & 20 \\
$(4,2,4)$ & 2.2 & 9   & 40  & 30 &       ${(5,3,4)}$    & $\infty$ & 1592 & 110 & 90   \\ 
$(4,2,5)$ & 12.2 & 29 & 82  & 42 &       $(5,3,5)$  &  $\infty$ & 16809 & 317 & 207  \\
$(4,2,6)$ & $\infty$ & 71 & 112 & 30 &   $(5,4,2)$   & 0.5 & 7  & 25 & 20 \\
$(4,2,7)$ & $\infty$ & 103 & 122 & 10  & $(5,4,3)$    & 10 & 42  & 65 & 40  \\
$(4,2,8)$ & $\infty$ & 106 & 122 & 0 &   $(5,4,4)$   & $\infty$ & 42  & 105 & 40 \\
$(4,2,9)$ & $\infty$ & 106 & 122 & 0 &   $(5,4,5)$   & $\infty$ & 858 & 121 & 16    \\
$(4,3,3)$ & 1 & 10  & 32 & 16 &          $(6,3,3)$    & 4 & 0  & 0  & 0  \\
$(4,3,4)$ & 590 & 21  & 40 & 8 &         $(6,3,4)$    & 140 & 1 & 0 & 0   \\
$(4,3,5)$ & $\infty$ & 22 & 40 & 0  &    $(6,3,5)$    & $\infty$ & 2  & 0 & 0  \\ 
$(4,3,6)$ & $\infty$ & 24 & 40 & 0   &   $(6,3,6)$    & $\infty$ & 704 & 112  & 112   \\ 
$(4,3,7)$ & $\infty$ & 26 & 40 & 0 &     $(6,4,2)$    & 0.6 & 1 & 0 & 0     \\
$(4,3,8)$ & $\infty$ & 27 & 40 & 0   &   ${(6,5,3)}$   & $\infty$ & 591  & 116 & 80 \\
\hline
\end{tabular}
}
\caption{Timings and degrees for dense symmetric linear matrices}
\label{tab:dense:sym}
\end{table}
}

\new{We recall that} $m$ is the size of the input matrix, $n$ is the
number of variables and $r$ is the expected maximum rank (that is, the
index of the algebraic set $\calD_r$). We compare our timings
(reported in {\sf LRS}) with those of the function {\sf
  PointsPerComponents} (column {\sf PPC}) of the library {\sc raglib}
developed by the third author \cite{raglib}. The input of {\sf
  PointsPerComponents} are the $(r+1) \times (r+1)$ minors of the
linear matrix, and the output is a rational parametrization of a
finite set meeting each connected component of $\calD_r \cap
\RR^n$.
\new{We do not consider the time needed to compute all the minors of
  $A(x)$ in {\sf PPC}.}  The symbol $\infty$ means that we did not
succeed in computing the parametrizations after 48 hours. Column {\sf
  deg} contains the degree of the parametrization returned by {\sf
  LowRankSymRec} at Step \ref{sym:step:lowrec:2a}, or $0$ if the empty
list is returned. Column {\sf totaldeg} contains the sum of the values
in {\sf deg} for $k$ varying between $1$ and $n$.  For example, for
$m=4,r=2$, for $n \leq 2$ and $n\geq 8$ the algorithm does not compute
critical points, while it computes rational parametrizations of degree
respectively $10,30,42,30,10$ for $n=3,4,5,6,7$; the number 82 in {\sf
  totaldeg} for $(m,n,r)=(4,2,5)$ is obtained as the sum $10+30+42$ of
the integers in {\sf deg} for $m=4,r=2$ and $n=3,4,5$.  We remark
that, as for Table \ref{degrees+bounds}, the value in {\sf deg} for a
given triple $m,n,r$ coincides with the algebraic degree of
semidefinite programming, that is $\delta(n,m,r)$ as defined in
\cite{stu}.

Our algorithm allows to tackle examples that are out of reach for {\sc
  raglib} and that, most of the time, the growth in terms of running
time is controlled when $m,r$ are fixed. This shows that our dedicated
algorithm leads to practical remarkable improvements: indeed, for
example, $4 \times 4$ linear matrices of expected rank $2$ are treated
in a few minutes, up to linear sections of dimension $9$; we are also
able to sample hypersurfaces in $\RR^5$ defined by the determinant of
$5 \times 5$ symmetric linear matrices; finally, symmetric linear
matrices of size up to $6$ with many rank defects are shown to be
tractable by our approach.
We observe that most of the time is spent to
compute a Gr\"obner basis of the Lagrange systems, and for this we use new fast algorithms
for the change of monomial orderings \cite{newfglm}
.

\subsection{Scheiderer's spectrahedron}
\label{sec5:ssec2}
We consider the symmetric pencil
{\footnotesize \[
A(x)=
\left(
\begin{array}{cccccc}
1 & 0 & x_1 & 0 & -{3}/{2}-x_2 & x_3 \\
0 & -2x_1 & {1}/{2} & x_{2} & -2-x_4 & -x_5 \\
x_1 & {1}/{2} & 1 & x_4 & 0 & x_6 \\
0 & x_2 & x_4 & -2x_3+2 & x_5 & {1}/{2} \\
-{3}/{2}-x_2 & -2-x_4 & 0 & x_5 & -2x_6 & {1}/{2} \\
x_3 & -x_5 & x_6 & {1}/{2} & {1}/{2} & 1
\end{array}
\right).
\]}
which is the {\it Gram matrix} of the trivariate polynomial
\[
f(u_1,u_2,u_3) = u_1^4+u_1u_2^3+u_2^4-3u_1^2u_2u_3-4u_1u_2^2u_3+2u_1^2u_3^2+u_1u_3^3+u_2u_3^3+u_3^4.
\]
In other words, $f = v^T A(x) v$ for all $x \in \RR^6$,
where $v=(u_1^2,u_1u_2,u_2^2,u_1u_3,u_2u_3,u_3^2)$ is the monomial basis of
the vector space of homogeneous polynomials of degree 2 in $u_1,u_2,u_3$.
Since $f$ is globally nonnegative, by Hilbert's theorem \cite{Hilbert3} it is a sum of at most three
squares in $\RR[u_1,u_2,u_3]$, namely there exist $f_1,f_2,f_3 \in \RR[u_1,u_2,u_3]$
such that $f=f_1^2+f_2^2+f_3^2$. Moreover, the spectrahedron
$\spec = \{x \in \RR^6 \mymid A(x) \succeq 0\}$ parametrizes all the
sum-of-squares decompositions of $f$ (and it is a particular example of
a {\it Gram spectrahedron}). Hence $\spec$ must contain a matrix of rank
at most $3$.

Scheiderer
proved in \cite{scheid} that $f$ does not admit a sum-of-squares
decomposition in the ring $\QQ[u_1,u_2,u_3]$, that is, the summands
$f_1,f_2,f_3$ cannot
be chosen to have rational coefficients, answering a
question of Sturmfels. We deduce that $\spec$ does not contain points
with rational coordinates. In particular, it is not full-dimensional (its
affine hull has dimension $\leq 5$) by straightforward density arguments.

We first easily check that $\calD_0 \cap \RR^6 = \calD_1 \cap \RR^6 = \emptyset$).
Further, for $r=2$, the algorithm returns the following rational parametrization
of $\calD_2 \cap \RR^6$:
{\small\[
\left( \frac{3+16t}{-8+24t^2}, \frac{8-24t^2}{-8+24t^2}, \frac{8+6t+8t^2}{-8+24t^2},
\frac{16+6t-16t^2}{-8+24t^2}, \frac{-3-16t}{-8+24t^2}, \frac{3+16t}{-8+24t^2} \right).
\]}
where $t$ satisfies $8t^3-8t-1 = 0$.
The set $\calD_2$ is, indeed, of dimension $0$, degree $3$, and it contains
only real points. Remark that the technical assumption $\sfP_2$ is not satisfied here,
since the expected dimension of $\calD_2$ is $-1$.
Conversely, the regularity assumptions on the incidence varieties are
satisfied.
By applying {\sf CheckLMI} one gets that two of the three
points lie on $\spec$, that is those with the following floating
point approximation up to 9 certified digits:
{\small
\[
\left(
\begin{array}{r}
-0.930402926 \\
-1.000000000 \\
 0.731299211 \\
-0.268700788 \\
 0.930402926 \\
-0.930402926 
\end{array}
\right)
\qquad \text{and} \qquad
\left(
\begin{array}{r}
-0.127050844 \\
-1.000000000 \\
-0.967716166 \\
-1.967716166 \\
 0.127050844 \\
-0.127050844
\end{array}
\right).
\]
}
These correspond to the two distinct decompositions of $f$ as a sum of 2 squares.
An approximation of such representations can be computed
by factorizing the matrix $A(x(t^*))= V^TV$ where $t^*$ is the corresponding root
of $8t^3-8t-1$ and $V \in \MM_{2,6}(\RR)$ is full rank. The corresponding
decomposition is $f=v^TV^TVv=\vert\vert V v\vert\vert^2$.
At the third point of $\calD_2 \cap \RR^6$
the matrix $A(x)$ is indefinite, so it is not a valid Gram matrix.

To conclude, {\sf SolveLMI} allows to design a computer-aided proof of Scheiderer's results.
This example is interesting since the interior of $\spec$ is empty and, typically, this can lead
to numerical problems when using interior-point algorithms. 

\section{Conclusion}
\label{sec6}
We have presented a \new{probabilistic} exact algorithm that computes
an algebraic representation of at least one feasible point of a LMI
$A(x) \succeq 0$, or that detects emptiness of
$\spec = \{x \in \RR^n \mymid A(x) \succeq 0\}$. \new{The algorithm
  works under assumptions which are proved to be generically
  satisfied.} \new{When these assumptions are not satisfied, the
  algorithm may return a wrong answer or raises an error (when the
  dimension of some Lagrange system is not $0$).} The main strategy is
to reduce the input problem to a sequence of real root finding
problems for the loci of rank defects of $A(x)$: if $\spec$ is not
empty, we have shown that computing sample points on determinantal
varieties is sufficient to sample $\spec$, and that it can be done
efficiently. Indeed, the arithmetic complexity is essentially
quadratic on a multilinear B\'ezout bound on the output degree.

This is, to our knowledge, the first exact computer algebra algorithm
tailored to linear matrix inequalities.  We conjecture that our
algorithm is optimal since the degree of the output parametrization
matches the algebraic degree of a generic semidefinite program, with
expected rank equal to the minimal achievable rank on $\spec$. Since
deciding the emptiness of $\spec$ is a particular instance of
computing the minimizer of a linear function over this set (namely, of
a constant), our algorithm is able to compute minimal-rank solutions
of special semidefinite programs, which is, in general, a hard
computational task. Indeed, numerical interior-point algorithms
typically return approximations of feasible matrices with maximal rank
among the solutions (those lying in the relative interior of the
optimal face). Moreover, the example of Scheiderer's spectrahedron
shows that we can also tackle degenerate situations with no interior
point which are typically numerically troublesome.

To conclude, as highlighted by the discussions in Section \ref{sec5},
our viewpoint includes an effective aspect, by which it is essential
to translate into practice the complexity results that have been
obtained. This is the objective of our {\sc maple} library {\sc
  spectra}. It must be understood as a starting point towards a
systematic exact computer algebra approach to semidefinite programming
and related questions.

\end{document}